\newcommand\blfootnote[1]{%
  \begingroup
  \renewcommand\thefootnote{}\footnote{#1}%
  \addtocounter{footnote}{-1}%
  \endgroup
}
\newcommand{\BPP}{{BPP}}
\newcommand{\CBPP}{{CBPP}}
\newcommand{\BPPMCF}{{BPPMCF}}
\newcommand{\BPPS}{{BPPS}}
\newcommand{\fone}{{\textnormal{ILP}_\textnormal{N}}}
\newcommand{\ftwo}{{\textnormal{ILP}^\dag_\textnormal{N}}}
\newcommand{\ftwobis}{{\textnormal{ILP}^\ddag_\textnormal{N}}}
\newcommand{\fthree}{{\textnormal{ILP}^{\star}_\textnormal{N}}}
\newcommand{\afone}{{\textnormal{ILP}_\textnormal{AF}}}
\newcommand{\aftwo}{{\textnormal{ILP}^\dag_\textnormal{AF}}}
\newcommand{\afthree}{{\textnormal{ILP}^{\star}_\textnormal{AF}}}
\newcommand{\afoneLP}{{\textnormal{LP}_\textnormal{AF}}}
\newcommand{\aftwoLP}{{\textnormal{LP}^\dag_\textnormal{AF}}}
\newcommand{\afthreeLP}{{\textnormal{LP}^{\star}_\textnormal{AF}}}
\newcommand{\foneLP}{{\textnormal{LP}_\textnormal{N}}}
\newcommand{\ftwoLP}{{\textnormal{LP}^\dag_\textnormal{N}}}
\newcommand{\ftwoLPbis}{{\textnormal{LP}^{\ddag}_\textnormal{N}}}
\newcommand{\fthreeLP}{{\textnormal{LP}^{\star}_\textnormal{N}}}
\def \comma{,}
\newcommand{\blue}[1]{#1}
\newcommand{\Rev}[1]{#1}
\newtheorem{proposition}{Proposition}
\newtheorem{lemma}{Lemma}%[section]
\theoremstyle{definition}
\newacronym{milp}{MILP}{Mixed Integer Linear Programming}
\newacronym{mip}{MIP}{Mixed Integer Programming}
\newacronym{ilp}{ILP}{Integer Linear Programming}
\newcommand{\items}{{\mathcal{I}}}
\newcommand{\classes}{{\mathcal{C}}}
\newcommand{\bins}{{\mathcal{B}}}
\newcommand{\opt}{{OPT}}
\newcommand{\UB}{{UB}}
\newcommand{\bincost}{{r}}
\newcommand{\binnumberUB}{k}
\newcommand{\binnumberLB}{\check{k}}
\newcommand{\binnumberLBclass}{\gamma}
\newcommand{\itemarcs}{\mathcal{A}_{\items}(i)}
\newcommand{\classarcs}{\mathcal{A}_{\classes}(c)}
\newcommand{\ratio}{{\varrho}}
\newcommand{\solclassopt}{{\mathcal{S}^\star(I_c)}}
\newcolumntype{L}[1]{>{\raggedright\arraybackslash}p{#1}}  % allineamento a sinistra
\newcolumntype{R}[1]{>{\raggedleft\arraybackslash}p{#1}}   % allineamento a destra
\journal{ }
\begin{document}

\begin{frontmatter}
  \title{\LARGE The Bin Packing Problem with Setups: Formulation\Rev{s}, Structural Properties and Computational Insights}

\author{Roberto Baldacci\textsuperscript{a}} 
\author{Fabio Ciccarelli\textsuperscript{b,$*$}}
\author{Valerio Dose\textsuperscript{b}} 
\author{Stefano Coniglio\textsuperscript{c}} 
\author{Fabio Furini\textsuperscript{b}}

\let\comma,

\affiliation{College of Science and Engineering. Hamad Bin Khalifa University. Qatar Foundation. Doha. Qatar}

\affiliation{Department of Computer, Control and Management Engineering Antonio Ruberti. Sapienza University of Rome. Rome. Italy}

\affiliation{Department of Economics. University of Bergamo. Bergamo. Italy}

\begin{abstract}
We introduce the \emph{Bin Packing Problem with Setups} (BPPS), a generalization of the classical Bin Packing Problem with applications in production planning and logistics. In this problem, the items are partitioned into classes, and packing items of a class in a bin incurs a setup weight and cost.
We propose a natural \textit{Integer Linear Programming} (ILP) formulation for the BPPS and analyze its \textit{Linear Programming} relaxation. We show that the resulting lower bound can be arbitrarily weak and introduce the \emph{Minimum Classes Inequalities} (MCIs), which guarantee a worst-case ratio of $1/2$ with respect to the optimal objective function value of the \BPPS{}. We also derive the \emph{Minimum Bins Inequality} (MBI) and an upper bound on the number of bins in any optimal solution, substantially reducing the formulation size.
\Rev{We further develop an arc-flow formulation for the BPPS based on a tailored graph construction and compression procedure. Its LP relaxation dominates that of the natural formulation, and both the MCIs and the MBI are extended to the arc-flow model.
Finally, we introduce a benchmark comprising $576$ randomly generated instances and $36$ real-world instances derived from a vehicle-routing application, and conduct extensive computational experiments. Results show that the natural formulation performs best on instances with small or medium item weights, whereas the arc-flow formulation is more effective for large item weights and on the real-world testbed.
}
\end{abstract}

  \begin{keyword}
    {Combinatorial Optimization \sep Bin Packing Problem \sep Integer Linear Programming \sep Computational Experiments}
  \end{keyword}
\end{frontmatter}

\blfootnote{*Corresponding author. E-mail address: {\tt f.ciccarelli@uniroma1.it}}

\section{Introduction}\label{sec:intro}

Given an infinite number of identical \emph{bins} with a positive integer \emph{capacity} \( d \in \mathbb{Z}_{\ge 1} \), and a set \( \items = \{1,2,\ldots,n\} \) of \( n \) \emph{items}, where each item \( i \in \items \) has a positive integer \emph{weight} \( w_i \in \mathbb{Z}_{\ge 1} \), the \emph{Bin Packing Problem} (\BPP{}) consists of determining a partition of the items into the minimum number of \emph{packing patterns}, where a packing pattern is a subset of items \(S\subseteq\items\) whose total item weight does not exceed the bin capacity.
This classical packing problem is fundamental in Operations Research and has been extensively studied from both theoretical and algorithmic perspectives; see~\citet{coffman2013bin} and~\citet{DELORME20161} for comprehensive surveys.

In this paper, we introduce and study a generalization of the \BPP{}, which we call the \emph{Bin Packing Problem with Setups} (\BPPS{}).  
In this new problem, the item set \( \items \) is partitioned into \( m \) \emph{item classes}, collectively denoted by \( \mathcal{P} = \{\items_1,\items_2,\dots,\items_m\} \), where, for each class \( c \in \classes  =  \{1,2,\ldots,m\}\), \( \items_c \subseteq \items \) is the subset of items belonging to class \( c \). Since the classes form a partition of \( \items \), we have
\[
\items = \bigcup_{c \in \classes} \items_c \quad \text{ and }
\quad 
\items_{c} \cap \items_{g} = \emptyset 
\quad \text{for all } c, g \in \classes \text{ with } c \ne g,
\]
that is, each item belongs to exactly one class.
For each class \( c \in \classes \), we are also given a nonnegative integer \emph{setup weight} \( s_c \in \mathbb{Z}_{\ge 0} \) and a nonnegative integer \emph{setup cost} \( f_c \in \mathbb{Z}_{\ge 0} \). Whenever at least one item of class \( c \) is packed into a bin, we incur both a setup cost \( f_c \) and a reduction in available bin capacity equal to the setup weight \( s_c \). Each setup cost and weight is incurred only once per bin--class pair, regardless of how many items of that class are placed in the same bin. Throughout this paper, a class is said to be \emph{active} in a bin if and only if the bin contains at least one of its items.
Finally, we are given a positive integer \emph{bin cost} \( \bincost \in \mathbb{Z}_{\ge 1} \), which is incurred once for every used bin.
%We assume that all input data are integers, since any rational values can be scaled to integers without affecting the problem's solutions.

\Rev{For any subset of items $S\subseteq\items$, let $\mathcal{C}(S)=\{c\in\classes:S\cap\items_c\ne\emptyset\}$ denote its active classes. In the \BPPS{}, a packing pattern is a subset $S\subseteq\items$ whose total item weight plus the setup weights of its active classes does not exceed the bin capacity:}
{\color{black}
\[
\sum_{i \in S} w_i + \sum_{c \in \mathcal{C}(S)} s_c \le d.
\]
}
\Rev{By construction, every packing pattern can be assigned to a single bin. A partition $\mathcal{S} \subseteq 2^{\items}$ of the item set into packing patterns has cost }
\begin{equation} \label{eq:UB}
\Rev{\sum_{S\in\mathcal S} \bigg(\bincost + \sum_{c\in\mathcal C(S)} f_c \bigg)= \bincost\,|\mathcal S| + \sum_{S\in\mathcal S}\sum_{c\in\mathcal C(S)} f_c,}
\end{equation}
\Rev{that is, the sum, over all packing patterns $S \in \mathcal{S}$, of the cost of the bin containing the pattern plus the setup costs of the classes active in it.
The \BPPS{} consists in determining a partition into packing patterns of minimum cost. We denote by $\mathcal{S}^\star \subseteq 2^{\items}$ an optimal partition, and by $\opt$ the corresponding optimal objective function value, i.e., the minimum cost required to pack all the items. The value of objective function~\eqref{eq:UB} of any \BPPS{} solution $\mathcal{S}$ provides an upper bound on the \BPPS{} optimum, and is therefore denoted by $\UB(\mathcal{S})$.}

An instance of the \BPPS{} is defined by the tuple
$
(n, \boldsymbol{w}, d, m, \mathcal{P}, \boldsymbol{s}, \boldsymbol{f}, \bincost),
$
where \( \boldsymbol{w} \in \mathbb{Z}_{\ge 1}^n \) is the vector of item weights, \( \boldsymbol{s} \in \mathbb{Z}_{\ge 0}^m \) is the vector of setup weights, and \( \boldsymbol{f} \in \mathbb{Z}_{\ge 0}^m \) is the vector of setup costs of the classes. 
To ensure the feasibility of a \BPPS{} instance, we assume that, for each class \( c \in \classes \), the combined weight of any item \( i \in \items_c \) and its setup weight does not exceed the bin capacity, i.e., \( w_i + s_c \le d \). Moreover, we exclude the trivial case where all items can be packed in a single bin.

If $f_c = s_c = 0$ for all $c \in \classes$, the \BPPS{} is equivalent to the classical \BPP.  
Since the \BPP{} is strongly $\mathcal{NP}$-hard, the same complexity result applies to the \BPPS{}. It is worth noticing that, when $\bincost = 0$, the objective coincides with the total setup cost. In this case, the problem is equivalent to solving an instance of the \BPP{} for each class $c \in \classes$, where the capacity of the bin is reduced to $d - s_c$ and the item set corresponds to $\items_c$.

The bin cost~\( \bincost \) plays an important role in shaping the structure of optimal \BPPS{} solutions.  
A high value of~\( \bincost \) discourages the use of multiple bins, promoting solutions that employ fewer bins packed as tightly as possible---even at the expense of activating multiple classes within the same bin, which increases the total setup cost.  
Conversely, when~\( \bincost \) is small, it becomes advantageous to use more bins to avoid incurring high setup costs from combining items of different classes in the same bin.  
\Rev{This also implies that, unlike the classical \BPP{}, where the number of used bins in an optimal solution coincides with the minimum number of bins required for packing all the items, in the \BPPS{} using more bins than the minimum may lead to a better solution.}
This trade-off is illustrated by the two optimal \BPPS{} solutions depicted in Figure~\ref{fig:1}.

\begin{figure}[h]
\begin{center}
\ifx\JPicScale\undefined\def\JPicScale{6.5}\fi
\unitlength \JPicScale mm
\begin{tikzpicture}[x=21,y=\unitlength,inner sep=0pt]

\draw[fill=black!20!white] (1,0) rectangle (9,6); % frame height set to 6

\draw[thick,-] (1,-0.3)--(1,6); % vertical axis stops at 6
\draw[thick,-] (0.7,0)--(9.5,0); % horizontal axis

\foreach \x in {1,3,5,7,9} {
  \draw[-] (\x,0)--(\x,-0.3);
}
\foreach \y in {1,2,3,4,5,6} {
  \draw[-] (0.7,\y)--(1,\y);
}

\draw (1,0) node[left=0.5cm] {$0$};
 \draw (1,3) node[left=0.5cm] {$3$};
\draw (1,6) node[left=0.5cm] {$d=6$};

\draw (2,0) node[below=0.5cm] {bin $1$};
\draw (4,0) node[below=0.5cm] {bin $2$};
\draw (6,0) node[below=0.5cm] {bin $3$};
\draw (8,0) node[below=0.5cm] {bin $4$};

\draw (5,-1) node[below=0.5cm] {(a) an optimal BPPS solution with $\bincost=10$ };

\draw[fill=white] (1,0) rectangle (3,3);  \draw (1,0) node[above right=0.2cm] {\small $w_1$};
\draw[fill=white] (1,3) rectangle (3,4);  \draw (1,3) node[above right=0.2cm] {\small $w_5$};
\draw[fill=white] (1,4) rectangle (3,5);  \draw (1,4) node[above right=0.2cm] {\small $s_1$};
\draw[fill=white] (1,5) rectangle (3,6);  \draw (1,5) node[above right=0.2cm] {\small $s_2$};

\draw[fill=white] (3,0) rectangle (5,3);  \draw (3,0) node[above right=0.2cm] {\small $w_2$};
\draw[fill=white] (3,3) rectangle (5,4);  \draw (3,3) node[above right=0.2cm] {\small $w_6$};
\draw[fill=white] (3,4) rectangle (5,5);  \draw (3,4) node[above right=0.2cm] {\small $s_1$};
\draw[fill=white] (3,5) rectangle (5,6);  \draw (3,5) node[above right=0.2cm] {\small $s_2$};

\draw[fill=white] (5,0) rectangle (7,3);  \draw (5,0) node[above right=0.2cm] {\small $w_3$};
\draw[fill=white] (5,3) rectangle (7,4);  \draw (5,3) node[above right=0.2cm] {\small $w_7$};
\draw[fill=white] (5,4) rectangle (7,5);  \draw (5,4) node[above right=0.2cm] {\small $s_1$};
\draw[fill=white] (5,5) rectangle (7,6);  \draw (5,5) node[above right=0.2cm] {\small $s_2$};

\draw[fill=white] (7,0) rectangle (9,3);  \draw (7,0) node[above right=0.2cm] {\small $w_4$};
\draw[fill=white] (7,3) rectangle (9,4);  \draw (7,3) node[above right=0.2cm] {\small $w_8$};
\draw[fill=white] (7,4) rectangle (9,5);  \draw (7,4) node[above right=0.2cm] {\small $s_1$};
\draw[fill=white] (7,5) rectangle (9,6);  \draw (7,5) node[above right=0.2cm] {\small $s_2$};

\draw[fill=black!20!white] (1+10,0) rectangle (11+10,6); % height = 6

\draw[thick,-] (1+10,-0.3)--(1+10,6);
\draw[thick,-] (0.7+10,0)--(11.5+10,0);

\foreach \x in {1,3,5,7,9,11} {
  \draw[-] (\x+10,0)--(\x+10,-0.3);
}
\foreach \y in {1,2,3,4,5,6} {
  \draw[-] (0.7+10,\y)--(1+10,\y);
}

\draw (2+10,0) node[below=0.5cm] {bin $1$};
\draw (4+10,0) node[below=0.5cm] {bin $2$};
\draw (6+10,0) node[below=0.5cm] {bin $3$};
\draw (8+10,0) node[below=0.5cm] {bin $4$};
\draw (10+10,0) node[below=0.5cm] {bin $5$};

\draw (6+10,-1) node[below=0.5cm] {(b) an optimal BPPS solution with $\bincost=1$};

\draw[fill=white] (1+10,0) rectangle (3+10,3); \draw (1+10,0) node[above right=0.2cm] {\small $w_1$};
\draw[fill=white] (1+10,3) rectangle (3+10,4); \draw (1+10,3) node[above right=0.2cm] {\small $s_1$};

\draw[fill=white] (3+10,0) rectangle (5+10,3); \draw (3+10,0) node[above right=0.2cm] {\small $w_2$};
\draw[fill=white] (3+10,3) rectangle (5+10,4); \draw (3+10,3) node[above right=0.2cm] {\small $s_1$};

\draw[fill=white] (5+10,0) rectangle (7+10,3); \draw (5+10,0) node[above right=0.2cm] {\small $w_3$};
\draw[fill=white] (5+10,3) rectangle (7+10,4); \draw (5+10,3) node[above right=0.2cm] {\small $s_1$};

\draw[fill=white] (7+10,0) rectangle (9+10,3); \draw (7+10,0) node[above right=0.2cm] {\small $w_4$};
\draw[fill=white] (7+10,3) rectangle (9+10,4); \draw (7+10,3) node[above right=0.2cm] {\small $s_1$};

\draw[fill=white] (9+10,0) rectangle (11+10,1); \draw (9+10,0) node[above right=0.2cm] {\small $w_5$};
\draw[fill=white] (9+10,1) rectangle (11+10,2); \draw (9+10,1) node[above right=0.2cm] {\small $w_6$};
\draw[fill=white] (9+10,2) rectangle (11+10,3); \draw (9+10,2) node[above right=0.2cm] {\small $w_7$};
\draw[fill=white] (9+10,3) rectangle (11+10,4); \draw (9+10,3) node[above right=0.2cm] {\small $w_8$};
\draw[fill=white] (9+10,4) rectangle (11+10,5); \draw (9+10,4) node[above right=0.2cm] {\small $s_2$};

\end{tikzpicture}
\end{center}

\caption{
  The figure considers a \BPPS{} instance with bin capacity \( d = 6 \), number of items \( n = 8 \), and number of classes \( m = 2 \).  
  The item weights are \( w_1 = w_2 = w_3 = w_4 = 3 \) and \( w_5 = w_6 = w_7 = w_8 = 1 \), while the setup weights and costs are \( s_1 = s_2 = 1 \), \( f_1 = 2 \) and \( f_2 = 3 \).  
  The items are partitioned into the classes \( \items_1 = \{1,2,3,4\} \) and \( \items_2 = \{5,6,7,8\} \).  
  The figure shows two optimal \BPPS{} solutions for this instance for two different values of the bin cost \( \bincost \).  
  Used bins are shown along the horizontal axis, while the vertical axis represents the bin capacity \( d \). For each bin, the figure displays the total weight of the packed items and the setup weight associated with their classes. The height of each item and setup block corresponds to its weight, visually illustrating bin utilization. The remaining space in each bin is shown in gray.
  In part~(a), with \( \bincost = 10 \), the optimal solution consists of four used bins: \( \big\{\{1,5\}, \{2,6\}, \{3,7\}, \{4,8\}\big\} \), yielding a total minimum cost of \( \opt = 60 \).  
  In part~(b), with \( \bincost = 1 \), the optimal solution consists of five used bins: \( \big\{\{1\}, \{2\}, \{3\}, \{4\}, \{5,6,7,8\}\big\} \), with a total minimum cost of \( \opt = 16 \).
}
\label{fig:1}
\end{figure}

\subsection{Applications and motivations}\label{sec:applications}

To the best of our knowledge, the \BPPS{} has not yet been addressed in the literature, despite its high practical relevance.
Setup costs and setup times are critical in many production planning applications where production lines or plants must be configured for specific activities; see, e.g., the books by~\citet{PochetBOOK} and~\citet{SawikBOOK} and the references therein. In such settings, products (items) are typically grouped into families (classes), and specific setup operations are required before manufacturing products of each family.
A typical application of the BPPS in production planning is as follows. Consider a set of identical production lines, each available for a fixed amount of time (corresponding to the bin capacity), and a fixed cost incurred whenever a line is activated. A set of products must be manufactured, with each product requiring a given processing time (item weight) and belonging to a single product class. Manufacturing items of a class on a production line requires a setup operation that consumes a predefined amount of time (setup weight) and incurs a monetary setup cost. These operations may, for example, involve the intervention of specialized technicians or the reconfiguration of equipment for a particular product family.
In this context, the \BPPS{} models the problem of determining a minimum-cost production plan. Consider, for instance, a production setting involving bottles, flasks, and pots, as in~\citet{CHEBIL201540}, where machinery requires setup operations when switching from one product type to another; another example is a series of sewing production lines, where garments of different shapes and colors are produced, and color changes require specific setup actions; see, e.g.,~\citet{FocacciFGGS16}.

Other important applications arise in logistics contexts where goods are distributed by vehicles that require special handling equipment, such as forklifts or conveyors, or by multi-compartment vehicles that transport different types of goods in separate compartments~\citep{iori_routing_2010,PollarisBCJL15,BRECHT2019591}.
In these applications, items represent customer orders to be delivered and are partitioned into classes, each requiring a specific piece of handling equipment. Bins correspond to the vehicles used for delivery. Assigning an order to a vehicle requires that the corresponding equipment be available on board, which reduces capacity and incurs equipment (setup) costs — e.g., from renting a forklift or installing specialized loading systems.
Similarly, in the chemical and pharmaceutical industries~\citep{bishara2006cold} and in the distribution of perishable goods~\citep{BRECHT2019591}, certain products require refrigerated or frozen storage. This reduces the available space in the vehicle (e.g., due to the installation of a bulkhead separating frozen and dry goods) and incurs additional costs (e.g., due to refrigeration energy consumption). \Rev{In this paper, we consider a set of real-world instances of exactly this type, arising from the distribution of fresh and frozen grocery products by a major international retailer.}
In such scenarios, the \BPPS{} models the problem of determining the optimal fleet composition and assigning customer orders to vehicles to minimize total distribution costs.

\subsection{Literature review on related problems}
\label{sec:lit}

The most closely related problem to the \BPPS{} is the \textit{Knapsack Problem with Setups} (KPS), introduced by~\citet{CG94} and~\citet{L98}. In this problem, items are partitioned into classes, and a fixed setup cost is incurred for each class whose items are packed in the knapsack. The objective is to select a subset of items that maximizes the total profit, defined as the sum of item profits minus the setup costs of the activated classes. As in the \BPPS{}, the setup associated with each active class also consumes part of the knapsack capacity, thereby creating a trade-off between item selection and class activation.
Several exact and heuristic algorithms have been proposed for the KPS, including MILP formulations, decomposition-based methods, and various combinatorial approaches~
\citep{A06,ARB08,CHEBIL201540,furini2018exact,DSS17,MPV09,pferschy2018improved,Yang2009}. 
Despite these similarities, the KPS differs fundamentally from the \BPPS{} in that it involves a single knapsack rather than multiple bins, and does not require packing all items.

Our study also connects to the broader literature on generalizations of the classical Bin Packing Problem (\BPP{}), aimed at capturing features that arise in complex real-world applications---see, e.g.,~\citep{baldacci_numerically_2024,ceselli2008optimization,DellAmicoDI12,DellAmicoFI20,MartinovicSCF23,sadykov2013bin,wei_new_2020}.
Among the most relevant and related variants,  the \textit{Class-Constrained Bin Packing Problem} (\CBPP{})~\citep{BORGES2020106455,Shachnai2001313,E2006class} assumes that items are partitioned into classes and imposes a limit on the number of distinct classes allowed per bin. The \textit{Bin Packing Problem with Minimum Color Fragmentation} (\BPPMCF{})~\citep{BOLOGNA25,bergman2019binary,casazza2014mathematical,mehrani2022models} aims to minimize the number of bins used for each color, encouraging the grouping of items of the same type, under the constraint that only a fixed number of bins is available.
While these models incorporate class-based constraints, they do not account for class-dependent setup costs or setup weights, nor do they model the trade-off between bin usage and class activation. To the best of our knowledge, the \BPPS{} is the first problem to address this setting by accounting for setup operations that are both capacity-consuming and cost-inducing, depending on the classes of items packed into each bin.
\Rev{Finally, it is worth mentioning the following two recent \BPP{} generalizations. \citet{Baldi2019} introduce the \textit{Generalized Bin Packing Problem with Bin-dependent Item Profits}, in which bins may have different costs and capacities, items may be compulsory or optional, and the profit generated by an item depends on the bin to which it is assigned. The objective is to minimize the cost of the selected bins minus the profits of the packed items.}
\Rev{\citet{Crainic2021} study \textit{Multi- and Single-Period Variable Cost and Size Bin Packing Problems with Assignment Costs}, which combine bin-selection costs with item-to-bin assignment costs; the multi-period variant also accounts for the availability of items and bins over time, while items that are not assigned to any bin incur an additional cost.}

\Rev{To position the \BPPS{} within the nomenclature introduced by~\citet{Crainic2021}, we use their labeling scheme $D/C/B/K/T[\cdot]$. Here, $D$ denotes the number of physical dimensions; $C\in\{U,V\}$ and $B\in\{U,V\}$ indicate whether bin costs and sizes, respectively, are unique or variable; $K\in\{S,M\}$ distinguishes between single- and multiple-type item settings; $T\in\{1,N,C\}$ denotes a single-period, multi-period, or continuous-time representation; and $[\cdot]$ lists additional problem-specific attributes. Accordingly, the \BPPS{} can be labeled
as $1/U/U/M=m/1[\text{Setup}]$: it is a one-dimensional, single-period problem with identical bins and $m$ item classes, in which activating a class in a bin incurs both a setup cost and an
additional capacity consumption. The attribute $[\text{Setup}]$ identifies a mechanism not covered by the attributes explicitly listed in~\citet{Crainic2021}.}

\subsection{Contributions and structure of the paper}

In Section~\ref{sec:ILP}, we propose a natural \textit{Integer Linear Programming} (ILP) formulation for the \BPPS{}, extending the classical formulation of the BPP.  
In Section~\ref{sec:strength1}, we study the structural properties of the \textit{Linear Programming} (LP) relaxation of the ILP formulation, deriving closed-form expressions for its optimal solution and showing that the associated lower bound can be arbitrarily poor in the worst case.  
\Rev{
In Section~\ref{sec:MCI}, we introduce an effective family of valid inequalities, called the \textit{Minimum Classes Inequalities} (MCIs), that strengthen the LP relaxation of the natural ILP model. We also analyze this strengthened relaxation and derive closed-form expressions for its optimal solution.
In Section~\ref{sec:strength2}, we prove that incorporating the MCIs yields a lower bound with worst-case performance ratio $1/2$ relative to the optimal objective function value of the \BPPS{}.}
In Section~\ref{sec:MBI}, we present an additional valid inequality, called the \textit{Minimum Bins Inequality} (MBI), which further strengthens the LP relaxation of the ILP model. In Section~\ref{sec:strength3}, we derive closed-form expressions for an optimal solution to the relaxation with the MBI.  
Finally, in Section~\ref{sec:bin_upper_bound}, we establish an upper bound on the number of bins used in any optimal \BPPS{} solution. This result enables a significant reduction in the number of variables and constraints of the ILP formulation.  Most of the proofs of these theoretical results are provided in Section~E.2 of the electronic companion due to space limitations. %
\Rev{In Section~\ref{sec:arcflow}, we present an \emph{arc-flow formulation} for the \BPPS{}, extending the construction of~\citet{Brand16} to handle item classes, setup weights, and setup costs. We show that the LP relaxation of this formulation dominates that of the natural ILP formulation, and we extend the MCIs and the MBI to this new formulation.}
Section~\ref{sec:computationals} presents the computational study. Since the \BPPS{} is a novel problem, we first introduce, in Section~\ref{sec:instances}, a benchmark library of \Rev{both real-world and} synthetic instances that captures its main features, including variations in item weights, class numbers, setup costs, and setup weights.  
\Rev{In Section~\ref{sec:performance}, we assess the computational performance of both the natural and the arc-flow ILP formulations for the \BPPS{}, evaluate the impact of the proposed enhancements on the effectiveness of such models, and compare the two formulation families against each other to identify the instance features that drive their performance.}
\Rev{In Section~\ref{sec:comp_LP}, we assess the strength of the LP relaxations of the natural and arc-flow formulation families, confirming that the arc-flow relaxation is consistently tighter, provide insights into the main structural features of optimal BPPS solutions, and discuss why the stronger relaxation of the arc-flow formulation does not always translate into superior computational performance.}
The paper concludes with Section~\ref{sec:conclusions}, which summarizes the contributions and outlines directions for future research.

\section{A natural ILP formulation for the BPPS}\label{sec:ILP}

In this section, we extend the classical ILP formulation for the \BPP{} (see, e.g., \citep{DELORME20161}) to the \BPPS{}. 
To this end, let $\binnumberUB \in \mathbb{Z}_{\ge 1}$ denote an upper bound on the number of bins used in any optimal \BPPS{} solution, and let $\bins = \{1,2,\dots,\binnumberUB\}$ be the set of candidate bins. The way valid values of $\binnumberUB$ can be obtained is discussed in Section~\ref{sec:bin_upper_bound}.  

For each item \( i \in \items \) and bin \( b \in \bins \), let \( x_{ib} \) be a binary variable equal to~1 if and only if item \( i \) is packed into bin \( b \).  
For each class \( c \in \classes \) and bin \( b \in \bins \), let \( y_{cb} \) be a binary variable equal to~1 if at least one item of class \( c \) is packed into bin \( b \).  
For each bin \( b \in \bins \), let \( z_b \) be a binary variable equal to~1 if bin \( b \) is used.  
The variables \( \boldsymbol{x} \in \{0,1\}^{n \, \binnumberUB} \) define the partition of the items into bins,  
the variables \( \boldsymbol{y} \in \{0,1\}^{m \, \binnumberUB} \) identify the active classes in each bin,  
and the variables \( \boldsymbol{z} \in \{0,1\}^{\binnumberUB} \) indicate which bins are used.  
Using these natural binary variables, we introduce the following ILP formulation for the \BPPS{}, which we refer to as~\(\fone\), where the subscript ``N'' stands for natural.
\begin{subequations}\label{form:kantorovich}
\begin{align}\label{con:kanto-obj}
&(\fone) &  \min_{\boldsymbol{x},\boldsymbol{y},\boldsymbol{z}} \quad  \sum_{b \in \bins} \bigg(  \bincost \, z_{b} + \sum_{c \in \classes} f_c \, y_{cb} \bigg) \\[1 ex]
\label{con:kanto-ass}
&& \sum_{b \in \bins} x_{ib}   & = 1,     &   i \in \items, \\[1 ex]
\label{con:kanto-cap}
&&    \sum_{i \in \items} w_i \, x_{ib} + \sum_{c \in \classes} s_c \; y_{cb}  & \leq d \, z_b,  &   b \in \bins, \\[1 ex]
\label{con:kanto-alpha}
&&    x_{ib}  & \leq   y_{cb},    &  c \in \classes, \,  i \in \items_c, \, b \in \bins.
\end{align} 
\end{subequations}
The objective function~\eqref{con:kanto-obj} to be minimized coincides with the sum of the fixed cost of the used bins and the total setup costs of the classes active in them.  
The \textit{assignment constraints}~\eqref{con:kanto-ass} require each item to be packed into exactly one bin.  
The \textit{capacity constraints}~\eqref{con:kanto-cap} ensure that the total weight of the items assigned to a bin, plus the setup weights of their active classes, does not exceed the bin capacity; they also guarantee that no items are placed in an unused bin.  
For each bin $b\in\bins$ and item $i\in\items$, the \textit{linking constraints}~\eqref{con:kanto-alpha} enforce that the class of item~$i$ is declared active in bin~$b$ whenever $x_{ib}=1$, thereby forcing the corresponding variable $y_{cb}$ to take value~1. 
Overall, formulation~$\fone$ involves $(n+m+1)\,\binnumberUB$ binary variables and $(n+1)\,\binnumberUB+n$ linear constraints.

\subsection{Strength of the LP relaxations of $\,\fone$} \label{sec:strength1}

Let us denote by $\foneLP$ the LP relaxation of $\fone$, obtained by replacing the binary variables with the following ones:
\begin{equation}\label{RELAX}
 0 \le x_{ib}  \le 1,~  i \in \items, \, b \in \bins,~~~~~ 
 0 \le y_{cb}  \le 1,~  c \in \classes, \, b \in \bins,~~~~~ 
 0 \le z_{b}  \le 1,~  b \in \bins. 
\end{equation} 
We denote by $\zeta(\foneLP)$ the optimal objective function value of $\foneLP$, which is a lower bound on the optimal objective function value~\(\opt\) of the \BPPS{}.  
The following proposition provides closed-form expressions for both an optimal solution to $\foneLP$ and its optimal objective function value.
\begin{proposition}\label{prop:lprelax}
For every \BPPS{} instance, an optimal solution to $\foneLP$ is
\begin{equation} \label{sol:LP1}
      x_{ib} = \frac 1 \binnumberUB, ~ i \in \items, \; b \in \bins, 
      ~~~~~ y_{cb} = \frac 1 \binnumberUB, ~  c \in \classes, \; b \in \bins, 
      ~~~~~  z_{b} = \frac{\sum_{i \in \items} w_i + \sum_{c \in \classes} s_c }{\binnumberUB \, d} ,  ~  b \in \bins, 
\end{equation}
and its optimal objective function value is
\begin{equation}\label{eq:LPoptval}
 \zeta(\foneLP) =   \frac{\bincost}{d} \left(\sum_{i \in \items} w_i + \sum_{c \in \classes} s_c \right) + \sum_{c \in \classes} f_c.
\end{equation}
\end{proposition}

Proposition~\ref{prop:lprelax} shows that an optimal solution to~$\foneLP$ is obtained by evenly distributing both the items and the setup weights across the available bins.  
The fractional value of each $z$-variable represents the proportion of the bin that is actually utilized.  

The optimal objective function value of $\foneLP$ is obtained by summing the setup costs of all classes (each counted once) and adding the bin cost multiplied by the ratio between the total item weight plus the total setup weights (each counted once) and the bin capacity.  

\blue{
Following the notation used, e.g., in \cite{MT90} and \cite{ManuelEJOR}, let $LB$ be a lower bound for a minimization problem.
For any instance $I$ of the problem, denote by $LB(I)$ the value of the lower bound $LB$ and by $OPT(I)$ the optimal objective function value (assumed positive).
The \textit{worst-case performance ratio} of $LB$ is defined as the largest real number $\ratio(LB)\in[0,1]$ such that
\begin{equation*}
    \frac{LB(I)}{OPT(I)} \ge \ratio(LB) ~~\text{for all instances}~I.
\end{equation*}
The following proposition shows that the worst-case performance ratio of the lower bound $\zeta(\foneLP)$\,\eqref{eq:LPoptval} is zero and, consequently, this lower bound can be arbitrarily weak.
The proof constructs a family of instances parameterized by the number of items.
In each instance, all items belong to a single class, and each bin can contain at most one item due to a very large setup weight.
In this case, an optimal \BPPS{} solution requires one bin per item. In contrast, in an optimal fractional solution of~$\foneLP$, both the setup and item weights can be fractionally distributed across all bins, yielding a much smaller optimal objective function value.
The claim then follows by letting the number of items tend to infinity.
\begin{proposition}\label{prop:ratio}
$$\ratio(\zeta(\foneLP)) = 0.$$
\end{proposition}
}

\begin{proof}
Consider the family of \BPPS{} instances, defined for each $n \ge 2$, with a single class ($m=1$, hence $\items_1=\items$), where $w_i = 1, i \in \items$, 
and $s_1 = n-1.$
Let $\binnumberUB=n$ and $d=n$, and assume that the setup cost of the class can take 
any non-negative integer value, i.e., $f_1 \in \mathbb{Z}_{\ge 0}$.
Since $w_i+s_1 = n$ for all items $i \in \items$, each bin can contain at most one item.  
Therefore, any optimal BPPS solution uses one bin per item, and the optimal objective function value of the \BPPS{} is $\opt = n \, (\bincost+f_1)$.
By Proposition~\ref{prop:lprelax}, the optimal objective function value of $\foneLP$ is
\[
\zeta(\foneLP) 
= \frac{\bincost}{n} \, \big(n + (n-1)\big)+f_1
= \frac{\bincost}{n} \, (2n-1)+f_1
= 2\bincost + f_1 - \frac{\bincost}{n} .
\]
Taking the limit of the ratio as $n \to \infty$ gives
\[
\lim_{n\to\infty}\;\frac{\zeta(\foneLP)}{\opt} 
= \lim_{n\to\infty}\; \frac{2\bincost+f_1-\frac{\bincost}{n}}{n\,(\bincost+f_1)}
= \lim_{n\to\infty}\;\left(\frac{2\bincost+f_1}{n(\bincost+f_1)}-\frac{\bincost}{n^2(\bincost+f_1)}\right)
= 0.
\]
\end{proof}

Due to Proposition~\ref{prop:lprelax}, in the family of instances considered in the proof of Proposition~\ref{prop:ratio}, all $z$-variables take the value
\[
 \frac{1}{n}\left( \frac{n}{n} + \frac{n-1}{n} \right)
= \frac{2n-1}{n^2},
\]
representing the fraction of each bin that is actually utilized in an optimal solution to $\foneLP$.  
Multiplying this value by the bin capacity $d=n$, we obtain that each bin is filled up to the value $(2n-1)/n$.  
Moreover, in this case, the lower bound on the number of used bins, given by the sum of the $z$-variables, is also $(2n-1)/n$, which converges to~2 as $n \to +\infty$.  
In contrast, any optimal \BPPS{} solution always requires exactly $n$ bins.  
An illustration of the difference between \BPPS{} and $\foneLP$ optimal solutions for the family of worst-case instances used in the proof of Proposition~\ref{prop:ratio} is provided in Figure~\ref{fig:ratio}.

\begin{figure}[h]
    \begin{center}
\ifx\JPicScale\undefined\def\JPicScale{5.5}\fi
\unitlength \JPicScale mm
\begin{tikzpicture}[x=18,y=\unitlength,inner sep=0pt]

\draw (2,4.5) node[below =0.7cm] {$\vdots$};
\draw (4,4.5) node[below =0.7cm] {$\vdots$};

\draw (8,4.5) node[below =0.7cm] {$\vdots$};
\draw (10,4.5) node[below =0.7cm] {$\vdots$};

\draw (2,9.5) node[below =0.9cm] {$///$};
\draw (4,9.5) node[below =0.9cm] {$///$};

\draw (8,9.5) node[below =0.9cm] {$///$};
\draw (10,9.5) node[below =0.9cm] {$///$};

\draw (6,3) node[below =0.7cm] {$\cdots$};
\draw (6,7) node[below =0.7cm] {$\cdots$};

\draw (1,0) node[left=0.25cm] {\small $0$};
\draw (1,6.5) node[left=0.25cm] {\small $\frac{2n-1}{n}$};
\draw (1,10) node[left=0.25cm] {\small $n$};

\draw (2,0) node[below =0.5cm] {bin $1$};
\draw (4,0) node[below =0.5cm] {bin $2$};
\draw (8,0) node[below =0.5cm] {bin $n \, \text{-} 1$};
\draw (10,0) node[below =0.5cm] {bin $n$};

\draw (6,-1.5) node[below =0.5cm] {(a) an optimal $\foneLP$ solution};

\draw[fill=white] (1,0) rectangle (3,1);
\draw (1,0) node[above right =0.1cm] {\small $ \frac{1}{n} w_1$};

\draw[fill=white] (1,1) rectangle (3,2);
\draw (1,1) node[above right =0.1cm] {\small $ \frac{1}{n} w_2$};

\draw[fill=white] (1,3) rectangle (3,4);
\draw (1,3) node[above right =0.1cm] {\small $ \frac{1}{n} w_{n-1}$};

\draw[fill=white] (1,4) rectangle (3,5);
\draw (1,4) node[above right =0.1cm] {\small $ \frac{1}{n} w_{n}$};

\draw[fill=white] (1,5) rectangle (3,6.5);
\draw (1,5) node[above right =0.2cm] {\small $ \frac{1}{n} s_1$};
\draw[fill=black!20!white] (1,6.5) rectangle (3, 7);

\draw[fill=black!20!white] (1,8) rectangle (3,10);

\draw[fill=white] (3,0) rectangle (5,1);
\draw (3,0) node[above right =0.1cm] {\small $ \frac{1}{n} w_1$};

\draw[fill=white] (3,1) rectangle (5,2);
\draw (3,1) node[above right =0.1cm] {\small $ \frac{1}{n} w_2$};

\draw[fill=white] (3,3) rectangle (5,4);
\draw (3,3) node[above right =0.1cm] {\small $ \frac{1}{n} w_{n-1}$};

\draw[fill=white] (3,4) rectangle (5,5);
\draw (3,4) node[above right =0.1cm] {\small $ \frac{1}{n} w_{n}$};

\draw[fill=white] (3,5) rectangle (5,6.5);
\draw (3,5) node[above right =0.2cm] {\small $ \frac{1}{n} s_1$};

\draw[fill=black!20!white] (3,6.5) rectangle (5,7);

\draw[fill=black!20!white] (3,8) rectangle (5,10);

\draw[fill=white] (7,0) rectangle (9,1);
\draw (7,0) node[above right =0.1cm] {\small $ \frac{1}{n} w_1$};

\draw[fill=white] (7,1) rectangle (9,2);
\draw (7,1) node[above right =0.1cm] {\small $ \frac{1}{n} w_2$};

\draw[fill=white] (7,3) rectangle (9,4);
\draw (7,3) node[above right =0.1cm] {\small $ \frac{1}{n} w_{n-1}$};

\draw[fill=white] (7,4) rectangle (9,5);
\draw (7,4) node[above right =0.1cm] {\small $ \frac{1}{n} w_{n}$};

\draw[fill=white] (7,5) rectangle (9,6.5);
\draw (7,5) node[above right =0.2cm] {\small $ \frac{1}{n} s_1$};

\draw[fill=black!20!white] (7,6.5) rectangle (9,7);

\draw[fill=black!20!white] (7,8) rectangle (9,10);

\draw[fill=white] (9,0) rectangle (11,1);
\draw (9,0) node[above right =0.1cm] {\small $ \frac{1}{n} w_1$};

\draw[fill=white] (9,1) rectangle (11,2);
\draw (9,1) node[above right =0.1cm] {\small $ \frac{1}{n} w_2$};

\draw[fill=white] (9,3) rectangle (11,4);
\draw (9,3) node[above right =0.1cm] {\small $ \frac{1}{n} w_{n-1}$};

\draw[fill=white] (9,4) rectangle (11,5);
\draw (9,4) node[above right =0.1cm] {\small $ \frac{1}{n} w_{n}$};

\draw[fill=white] (9,5) rectangle (11,6.5);
\draw (9,5) node[above right =0.2cm] {\small $ \frac{1}{n} s_1$};

\draw[fill=black!20!white] (9,6.5) rectangle (11,7);
\draw[fill=black!20!white] (9,8) rectangle (11,10);

\draw (2+13,9.5) node[below =0.9cm] {$///$};
\draw (4+13,9.5) node[below =0.9cm] {$///$};
\draw (8+13,9.5) node[below =0.9cm] {$///$};
\draw (10+13,9.5) node[below =0.9cm] {$///$};

\draw (6+13,3) node[below =0.7cm] {$\cdots$};

\draw (6+13,7) node[below =0.7cm] {$\cdots$};

\draw[fill=white] (1+13,3.5) rectangle (3+13,7);
\draw (1+13,3.5) node[above right =0.2cm] {\small $s_1$};
\draw[fill=white] (1+13,8) rectangle (3+13,10);

\draw[fill=white] (1+13,0) rectangle (3+13,3.5);
\draw (1+13,0) node[above right =0.2cm] {\small $w_1$};

\draw[fill=white] (3+13,3.5) rectangle (5+13,7);
\draw (3+13,3.5) node[above right =0.2cm] {\small $s_{1}$};
\draw[fill=white] (3+13,8) rectangle (5+13,10);

\draw[fill=white] (3+13,0) rectangle (5+13,3.5);
\draw (3+13,0) node[above right =0.2cm] {\small $w_{2}$};

\draw[fill=white] (7+13,3.5) rectangle (9+13,7);
\draw (7+13,3.5) node[above right =0.2cm] {\small $s_{1}$};
\draw[fill=white] (7+13,8) rectangle (9+13,10);

\draw[fill=white] (7+13,0) rectangle (9+13,3.5);
\draw (7+13,0) node[above right =0.2cm] {\small $w_{n-1}$};

\draw[fill=white] (9+13,3.5) rectangle (11+13,7);
\draw (9+13,3.5) node[above right =0.2cm] {\small $s_{1}$};
\draw[fill=white] (9+13,8) rectangle (11+13,10);

\draw[fill=white] (9+13,0) rectangle (11+13,3.5);
\draw (9+13,0) node[above right =0.2cm] {\small $w_{n}$};

\draw (2+13,0) node[below =0.5cm] {bin $1$};
\draw (4+13,0) node[below =0.5cm] {bin $2$};
\draw (8+13,0) node[below =0.5cm] {bin $n \, \text{-} 1$};
\draw (10+13,0) node[below =0.5cm] {bin $n$};

\draw (6+13,-1.5) node[below =0.5cm] {(b) an optimal BPPS solution};

\end{tikzpicture}
\end{center}

        \centering
\caption{The left part of the figure illustrates an optimal solution to~$\foneLP$ for the worst-case family of instances used in the proof of Proposition~\ref{prop:ratio}.  
Each bin is filled up to the value $\frac{2n-1}{n}$. The blocks represent the fraction of the item weights and of the class setup weight assigned to each bin.
The right part of the figure illustrates an optimal \BPPS{} solution for the same instance, in which each bin of capacity $n$ is completely filled by the weight of one item plus the class's setup weight.  
}
    \label{fig:ratio}
\end{figure}

\subsection{Minimum Classes Inequalities} 
\label{sec:MCI}

To strengthen $\foneLP$, we introduce the following family of inequalities, called the \emph{Minimum Classes Inequalities} (MCIs):
\begin{equation}\label{MCI}
  \sum_{b \in \bins} y_{cb} \;\geq\; 
  \underbrace{\left\lceil \frac{\sum_{i \in \items_c} w_i}{d - s_c}\right\rceil}_{=\binnumberLBclass_c},
  \qquad c \in \classes,
\end{equation}
where, for each class $c \in \classes$, the right-hand side~$\binnumberLBclass_c$ is a lower bound on the minimum number of bins required to pack all its items in any optimal BPPS solution.  
It is defined as the ceiling of the ratio between the total weight of the items in class~$c$ and the residual bin capacity
$
d - s_c,
$
namely, the capacity available after accounting for the setup weight of class~$c$.  
These inequalities guarantee that each class is activated in a sufficient number of bins, thereby excluding fractional solutions of~$\foneLP$ that would otherwise underestimate the actual packing requirements.  
The right-hand sides of the MCIs~\eqref{MCI} can be efficiently computed in linear time with respect to the number of items~$n$ of a \BPPS{} instance.
The following proposition states that the MCIs in~\eqref{MCI} are valid inequalities for~$\fone$.

\begin{proposition}\label{prop:mcoi}
For every \BPPS{} instance, the Minimum Classes Inequalities~\eqref{MCI} are satisfied by all feasible solutions of~$\fone$.
\end{proposition}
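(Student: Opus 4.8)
The plan is to argue directly on the integer feasible solutions of $\fone$, since \eqref{MCI} only needs to hold there. Fix a feasible $(\boldsymbol{x},\boldsymbol{y},\boldsymbol{z})$ and a class $c \in \classes$, and set $B_c = \{\, b \in \bins : y_{cb} = 1 \,\}$, the set of bins in which class $c$ is active. Because every $y$-variable is binary, the left-hand side of \eqref{MCI} equals $|B_c|$, so the task reduces to proving $|B_c| \ge \binnumberLBclass_c$. If $\items_c = \emptyset$ (or, more generally, $\sum_{i \in \items_c} w_i = 0$), then $\binnumberLBclass_c = 0$ and the inequality is trivially satisfied, so I assume $\sum_{i \in \items_c} w_i > 0$.

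The first step is to note that every item of class $c$ is placed in a bin belonging to $B_c$: by the assignment constraint \eqref{con:kanto-ass}, each $i \in \items_c$ has a unique bin $b$ with $x_{ib} = 1$, and the linking constraint \eqref{con:kanto-alpha} then forces $y_{cb} = 1$, i.e. $b \in B_c$. Hence
\[
\sum_{i \in \items_c} w_i \;=\; \sum_{b \in B_c} \; \sum_{i \in \items_c} w_i \, x_{ib}.
\]

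The second step is to bound the class-$c$ load of each active bin. For $b \in B_c$, the capacity constraint \eqref{con:kanto-cap}, combined with $z_b \le 1$, $y_{cb} = 1$, and the nonnegativity of the discarded terms $\sum_{i \notin \items_c} w_i x_{ib}$ and $\sum_{g \ne c} s_g y_{gb}$ (all variables binary, all data nonnegative), yields $\sum_{i \in \items_c} w_i x_{ib} \le d - s_c$. Summing over $b \in B_c$ gives $\sum_{i \in \items_c} w_i \le |B_c|\,(d - s_c)$. Since some item of class $c$ has positive weight, the instance-feasibility assumption $w_i + s_c \le d$ forces $d - s_c \ge 1 > 0$, so I may divide to obtain $|B_c| \ge \bigl(\sum_{i \in \items_c} w_i\bigr)/(d - s_c)$; integrality of $|B_c|$ then gives $|B_c| \ge \bigl\lceil \bigl(\sum_{i \in \items_c} w_i\bigr)/(d - s_c)\bigr\rceil = \binnumberLBclass_c$, which is the claim.

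There is essentially no obstacle in this argument: it is a routine counting bound. The only points that deserve a line of care are the legitimacy of dividing by $d - s_c$ (handled via the feasibility assumption together with the reduction to the case of positive total class weight) and the explicit verification that the terms dropped from the capacity constraint are nonnegative.
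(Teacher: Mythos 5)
Your proof is correct and follows essentially the same route as the paper's: define the set of bins in which class $c$ is active, use the assignment and linking constraints to show all items of class $c$ land in those bins, bound the class-$c$ load of each such bin by $d - s_c$ via the capacity constraint, then sum, divide, and round up by integrality. Your extra care about the degenerate case and the positivity of $d - s_c$ is harmless but not needed, since item weights are strictly positive and every class is nonempty by the partition assumption.
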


The proof uses the constraints of $\foneLP$, together with the integrality of the variables, to derive the MCIs for all integer-feasible solutions.
We denote by $\ftwo$ the formulation obtained by adding the MCIs in~\eqref{MCI} to $\fone$, and by $\ftwoLP$ its LP relaxation.  
We let $\zeta(\ftwoLP)$ denote the optimal objective function value of $\ftwoLP$, which provides a lower bound on the optimal objective function value~\(\opt\) of the \BPPS{}, and is
at least as strong as~$\zeta(\foneLP)$.  

Not only $\foneLP$, but also $\ftwoLP$ admits closed-form expressions for both an optimal solution to $\ftwoLP$ and its optimal objective function value, as stated by the following proposition.

\begin{proposition}\label{prop:lprelax_2}
For every \BPPS{} instance, an optimal solution to $\ftwoLP$ is 
\begin{equation}\label{sol:LP2}
 x_{ib} = \frac 1 \binnumberUB, ~ i \in \items, \; b \in \bins, 
 ~~~~~
 y_{cb} = \frac{\binnumberLBclass_c}{\binnumberUB}, ~ c \in \classes, \; b \in \bins, 
 ~~~~~
 z_{b} = \frac{\sum_{i \in \items} w_i + \sum_{c \in \classes} \binnumberLBclass_c \, s_c }{\binnumberUB \, d} ,  ~  b \in \bins,
\end{equation}
and its optimal objective function value is
\begin{equation}\label{eq:LPoptval_2}
\zeta(\ftwoLP) \;=\; \frac{\bincost}{d} \left(\sum_{i \in \items} w_i + \sum_{c \in \classes} \binnumberLBclass_c \, s_c \right) \;+\; \sum_{c \in \classes} \binnumberLBclass_c \, f_c.
\end{equation}
\end{proposition}

Proposition~\ref{prop:lprelax_2} shows that an optimal solution to~$\ftwoLP$ is again obtained by evenly distributing the items across the available bins, but now the setup weights of each class~$c \in \classes$ are distributed proportionally to~$\binnumberLBclass_c$.  
The fractional value of each $z$-variable represents the proportion of the bin that is actually utilized. These values are strictly larger than those in~\eqref{sol:LP1} whenever there exists a class $c\in\classes$ such that $\binnumberLBclass_c>1$ and $s_c>0$.
The optimal objective function value of $\ftwoLP$ is obtained by summing the setup costs of all classes, each counted $\binnumberLBclass_c$ times, and adding the bin cost multiplied by the ratio between the total item weight plus the setup weights of all classes, each counted $\binnumberLBclass_c$ times, and the bin capacity.

\subsection{Strength of the LP relaxations of $\,\ftwo$} \label{sec:strength2}

The inclusion of the MCIs~\eqref{MCI} yields the following main theoretical result of the paper: the lower bound $\zeta(\ftwoLP)$\,\eqref{eq:LPoptval_2} is always strictly greater than one half of the optimal objective function value of the \BPPS{}, $\opt$.
This result shows that, unlike $\foneLP$, the relaxation $\ftwoLP$ cannot yield arbitrarily weak lower bounds.

\begin{proposition}\label{prop:ratio_MCI}
For every \BPPS{} instance, we have
\[
\frac{\zeta(\ftwoLP)}{\opt} \;>\; \frac{1}{2}.
\]
\end{proposition}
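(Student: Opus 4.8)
The plan is to compare the closed-form expression for $\zeta(\ftwoLP)$ given in \eqref{eq:LPoptval_2} against a lower bound on $\opt$, and to show that twice the former strictly exceeds the latter. First I would establish two elementary lower bounds on $\opt$ coming from the structure of any feasible BPPS solution. The first is a \emph{capacity-based} bound: in any feasible solution every used bin $b$ satisfies $\sum_{i \in \items} w_i x_{ib} + \sum_{c\in\classes} s_c y_{cb} \le d$, so summing over bins and using that each item is packed exactly once gives $\sum_{i\in\items} w_i + \sum_{c\in\classes} s_c \cdot (\text{number of bins in which } c \text{ is active}) \le d \cdot (\text{number of used bins})$; since class $c$ must be active in at least $\binnumberLBclass_c$ bins (the MCIs are valid by Proposition~\ref{prop:mcoi}), this yields a lower bound on the number of used bins of $\big(\sum_i w_i + \sum_c \binnumberLBclass_c s_c\big)/d$, hence $\opt \ge \frac{\bincost}{d}\big(\sum_i w_i + \sum_c \binnumberLBclass_c s_c\big)$ after multiplying by the bin cost $\bincost$. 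The second is a \emph{setup-cost} bound: since each class $c$ is active in at least $\binnumberLBclass_c$ bins in any feasible solution, the total setup cost is at least $\sum_{c\in\classes} \binnumberLBclass_c f_c$, so $\opt \ge \sum_{c\in\classes} \binnumberLBclass_c f_c$.

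Adding these two bounds is not immediately legitimate, because the capacity bound already ``spends'' the bin costs and the setup-cost bound the setup costs, and in principle the same bin could be doing double duty --- but in fact they \emph{are} additive: the bin cost $\bincost z_b$ and the setup costs $\sum_c f_c y_{cb}$ appear as separate terms in the objective \eqref{con:kanto-obj}, so $\opt \ge \frac{\bincost}{d}\big(\sum_i w_i + \sum_c \binnumberLBclass_c s_c\big) + \sum_{c} \binnumberLBclass_c f_c = \zeta(\ftwoLP)$ by \eqref{eq:LPoptval_2}. This reproves that $\zeta(\ftwoLP)$ is a valid lower bound, but of course gives only $\zeta(\ftwoLP)/\opt \le 1$, not $> 1/2$. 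To get the factor of $2$ I would sharpen the capacity bound to an \emph{integrality} bound: the number of used bins is a positive integer, so it is at least $\big\lceil \big(\sum_i w_i + \sum_c \binnumberLBclass_c s_c\big)/d \big\rceil$, and more usefully, since $\opt$ itself pays $\bincost$ per bin, $\opt \ge \bincost \cdot \binnumberLB + \sum_c \binnumberLBclass_c f_c$ where $\binnumberLB = \big\lceil (\sum_i w_i + \sum_c \binnumberLBclass_c s_c)/d \big\rceil$. The key inequality then reduces to showing that $2\zeta(\ftwoLP) > \opt$; writing $W = \sum_i w_i + \sum_c \binnumberLBclass_c s_c$ and $F = \sum_c \binnumberLBclass_c f_c$, we have $\zeta(\ftwoLP) = \frac{\bincost}{d} W + F$, so it suffices to show $\frac{2\bincost}{d} W + 2F > \opt$.

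The heart of the argument is therefore to bound $\opt$ \emph{from above} by something close to $2\zeta(\ftwoLP)$, by exhibiting a feasible BPPS solution of cost at most (roughly) $2\zeta(\ftwoLP)$, in the spirit of the classical ``first-fit uses at most twice the optimal number of bins'' argument for the BPP. Concretely, I would construct a feasible solution class by class: for each class $c$, pack its items (together with the mandatory setup weight $s_c$ consuming capacity, leaving residual capacity $d - s_c$) into bins using a first-fit-decreasing type rule, which uses at most $2\binnumberLBclass_c$ bins --- this is where the ceiling $\binnumberLBclass_c = \lceil \sum_{i\in\items_c} w_i/(d-s_c)\rceil$ and the feasibility assumption $w_i + s_c \le d$ are used, via the standard fact that a bin-packing greedy leaves at most one bin less than half full, so the number of bins is less than $2 \sum_{i\in\items_c} w_i/(d-s_c) + 1 \le 2\binnumberLBclass_c$ when $\binnumberLBclass_c \ge 1$; one must handle the edge case where a class has very light total weight carefully. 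Keeping classes in separate bins means each used bin is active in exactly one class, so this solution has cost at most $\sum_c 2\binnumberLBclass_c(\bincost + f_c) = 2\bincost\sum_c \binnumberLBclass_c + 2F$. Comparing with $2\zeta(\ftwoLP) = \frac{2\bincost}{d}W + 2F$, it remains to check $\bincost \sum_c \binnumberLBclass_c \le \frac{\bincost}{d}\big(\sum_i w_i + \sum_c \binnumberLBclass_c s_c\big)$, i.e.\ $\sum_c \binnumberLBclass_c d \le \sum_i w_i + \sum_c \binnumberLBclass_c s_c$, i.e.\ $\sum_c \binnumberLBclass_c(d - s_c) \le \sum_i w_i$; but $\binnumberLBclass_c(d-s_c) \ge \sum_{i\in\items_c} w_i$ by definition of the ceiling, so this inequality goes the \emph{wrong way}. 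This is the main obstacle, and it signals that the clean ``factor $2$ from first-fit'' heuristic bound must be combined with the setup-cost/bin-cost lower bounds more cleverly --- most likely by splitting into cases according to whether $\frac{\bincost}{d}W$ or $F$ (or, per class, the bin-cost contribution versus the setup-cost contribution) dominates, using the heuristic bound to control one part and the matching lower bound $\opt \ge \binnumberLB \bincost$ and $\opt \ge F$ to control the other, and exploiting that the strict inequality is only claimed (the instance where every bin holds a single maximal item, as in Proposition~\ref{prop:ratio}, shows the bound is essentially tight). I expect the final write-up to set $L := \zeta(\ftwoLP)$, assume for contradiction $\opt \ge 2L$, and derive a contradiction by playing the heuristic upper bound against the structural lower bounds term by term.
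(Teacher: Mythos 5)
Your proposal contains a genuine gap, and you have in fact located it yourself: the comparison $2\zeta(\ftwoLP) > \sum_{c}2\binnumberLBclass_c(\bincost+f_c)$ reduces to $\sum_{i\in\items}w_i \ge \sum_{c\in\classes}\binnumberLBclass_c(d-s_c)$, which is false by the very definition of $\binnumberLBclass_c$ as a ceiling. The closing suggestion to ``split into cases'' and ``play the heuristic bound against the structural lower bounds'' is not carried out, and it cannot be carried out with the first-fit estimate $2\binnumberLBclass_c$: a class whose items all fit into a single bin can have arbitrarily small total weight, so the term $\frac{\bincost}{d}\bigl(\sum_{i\in\items_c}w_i+\binnumberLBclass_c s_c\bigr)$ contributes almost nothing to $\zeta(\ftwoLP)$, while your heuristic still charges $\bincost+f_c$ (or more) for that class's private bin. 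No term-by-term accounting survives this; the single-bin classes must be \emph{merged} with one another (and possibly with a multi-bin class), which is exactly what your class-separated construction forbids.

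The paper's proof differs in the two places where your argument breaks. First, it packs each class with an \emph{optimal} per-class BPP solution using $\beta_c$ bins (not a first-fit bound of $2\binnumberLBclass_c$), and then splits the comparison: the setup-cost part uses $\binnumberLBclass_c\ge\beta_c/2$ (the classical inequality $\lceil\sum w_i/d'\rceil\ge\beta/2$), while the bin-cost part uses the strict Lemma~\ref{prop:lemma} ($\sum_i w_i>\beta d'/2$ whenever $\beta\ge2$) to get $\sum_{i\in\items_c}w_i+\binnumberLBclass_c s_c \ge d\,\frac{\sum_{i\in\items_c}w_i}{d-s_c}>\frac{\beta_c d}{2}$ --- an inequality that points the right way precisely because it bypasses the ceiling. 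Second, since that inequality fails for classes with $\beta_c=1$, the paper adds two further steps that aggregate all such classes into shared bins (and, if they still occupy one extra bin, attempt to absorb them into a bin of a multi-bin class), applying Lemma~\ref{prop:lemma} again to each auxiliary BPP instance so created. Your opening lower bounds on $\opt$ are correct but only re-derive $\zeta(\ftwoLP)\le\opt$; the substantive half of the proposition is not established.
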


The proof of this proposition is based on the construction of a heuristic solution to the BPPS, obtained by solving several BPPs defined accounting for the setup costs of the classes in different ways: either by reducing the capacity, or by increasing the item weights, or by combining the two ideas, depending on the class. This allows the use of classical results on the BPP to show the inequalities sought. 
The proof is somewhat surprising because it requires a careful control of the heuristic solution’s structure. In contrast, simpler constructions that would intuitively yield a better value do not make the result any easier to prove.

\blue{
The following proposition shows that the worst-case performance ratio of the lower bound $\zeta(\ftwoLP)$\,\eqref{eq:LPoptval_2} is $1/2$. 
The proof constructs a family of instances parameterized by the bin capacity, inspired by the classic worst-case examples for the \BPP{}, see, e.g., \cite[Section~8.3.2]{MT90}.
For these instances, the ratio $\zeta(\ftwoLP)/\opt$ can be made arbitrarily close to $1/2$.
The claim then follows by letting the bin capacity tend to infinity.
\begin{proposition}
\label{prop:worstcase}
$$\ratio(\zeta(\ftwoLP)) = \frac{1}{2}.$$
\end{proposition}
}
\vspace{-10mm}
\blue{
\begin{proof}
Consider the family of \BPPS{} instances with a single class ($m=1$, hence $\items_1=\items$), defined for each $\vartheta \in \mathbb{Z}_{\ge 1}$ by $d = 2\,\vartheta, \, w_i = \vartheta, i \in \items$, $s_1 = 1$, and $f_1 = 0$.
For each pair of distinct items $i, j \in \items$, we have that $w_i + w_j +s_1 = 2\, \vartheta +1 > d$. Hence, each bin can contain at most one item.
Therefore, any optimal BPPS solution uses one bin per item, and the optimal objective function value of the \BPPS{} is $\opt = n \, \bincost$.
Moreover, we have $\binnumberLBclass_1 \;=\; \left\lceil \frac{n \,\vartheta}{2 \, \vartheta-1} \right\rceil$
and, by Proposition~\ref{prop:lprelax_2} and letting $k = n$, the optimal objective function value of $\ftwoLP$ is
\[
\zeta(\ftwoLP)
= 
 \frac{\bincost}{2 \,\vartheta}\!\left(n \, \vartheta + \left\lceil \frac{n \, \vartheta}{2 \, \vartheta-1} \right\rceil \right)
= n \, \frac{\bincost}{2}
+ \frac{\bincost}{2 \,\vartheta}\left\lceil \frac{n \, \vartheta}{2 \, \vartheta-1} \right\rceil.
\]
Taking the limit of the ratio as $\vartheta \to \infty$ gives
\[
\lim_{\vartheta\to\infty}\;\frac{\zeta(\ftwoLP)}{\opt}
= \lim_{\vartheta\to\infty}\; \frac{n \, \frac{\bincost}{2}
+ \frac{\bincost}{2 \,\vartheta}\left\lceil \frac{n \, \vartheta}{2 \, \vartheta-1}\right\rceil}{n \, \bincost}
= \frac{1}{2}
+ \lim_{\vartheta\to\infty}\;\frac{1}{2n\vartheta}\left\lceil \frac{n \, \vartheta}{2 \, \vartheta-1}\right\rceil
= \frac{1}{2},
\]
where the last equality follows from the fact that
$\left\lceil \frac{n\vartheta}{2\vartheta-1}\right\rceil \le n$ for all $\vartheta\ge 1$,
and hence
$\frac{1}{2n\vartheta}\left\lceil \frac{n\vartheta}{2\vartheta-1}\right\rceil \le \frac{1}{2\vartheta}\to 0$ as $\vartheta \to \infty$. The result follows by combining the above limit with Proposition~\ref{prop:ratio_MCI}.
\end{proof}
}

Due to Proposition~\ref{prop:lprelax_2}, in the family of instances considered in the proof of Proposition~\ref{prop:worstcase}, all $z$-variables take the value
\[
 \frac{1}{2 \, \vartheta}\left( \frac{n \, \vartheta}{n} + \frac{\binnumberLBclass_1}{n} \right)
= \frac{\vartheta+\frac{\binnumberLBclass_1}{n}}{2 \, \vartheta}
\]
representing the fraction of each bin that is actually utilized in an optimal solution to $\ftwoLP$.  
Multiplying this value by the bin capacity $d=2\vartheta$, we obtain that each bin is filled up to the value $\vartheta+\frac{\binnumberLBclass_1}{n}$.  
Moreover, in this case, the lower bound on the number of used bins, given by the sum of the $z$-variables, is $(n\vartheta+\binnumberLBclass_1)/(2\vartheta)$, which converges to $n/2$ as $\vartheta \to +\infty$.  
In contrast, any optimal \BPPS{} solution always requires exactly $n$ bins.  
An illustration of the difference between \BPPS{} and $\ftwoLP$ optimal solutions for the worst-case instance used in the proof of Proposition~\ref{prop:worstcase} is provided in Figure~\ref{fig:example_bound_LC+}. 

\begin{figure}[h]
 
   \begin{center}
\ifx\JPicScale\undefined\def\JPicScale{5.5}\fi
\unitlength \JPicScale mm
\begin{tikzpicture}[x=18,y=\unitlength,inner sep=0pt]

\draw (2,5.5) node[below =0.7cm] {$\vdots$};
\draw (4,5.5) node[below =0.7cm] {$\vdots$};

\draw (8,5.5) node[below =0.7cm] {$\vdots$};
\draw (10,5.5) node[below =0.7cm] {$\vdots$};

\draw (2,9.5) node[below =0.9cm] {$///$};
\draw (4,9.5) node[below =0.9cm] {$///$};

\draw (8,9.5) node[below =0.9cm] {$///$};
\draw (10,9.5) node[below =0.9cm] {$///$};

\draw (6,3) node[below =0.7cm] {$\cdots$};
\draw (6,7) node[below =0.7cm] {$\cdots$};

\draw (1,0) node[left=0.25cm] {\small  $0$};
\draw (1,6) node[left=0.25cm] {\small{$\vartheta + \frac{\gamma_1}{n}$}};
\draw (1,10) node[left=0.25cm] {\small $2\,\vartheta$};

\draw (2,0) node[below =0.5cm] {bin $1$};
\draw (4,0) node[below =0.5cm] {bin $2$};
\draw (8,0) node[below =0.5cm] {bin $n \, \text{-} 1$};
\draw (10,0) node[below =0.5cm] {bin $n$};

\draw (6,-1.5) node[below =0.5cm] {(a) an optimal $\ftwoLP$ solution};

\draw[fill=white] (1,0) rectangle (3,1);
\draw (1,0) node[above right =0.1cm] {\small $ \frac{\gamma_1}{n} s_1$};

\draw[fill=white] (1,1) rectangle (3,2);
\draw (1,1) node[above right =0.1cm] {\small $ \frac{1}{n} w_1$};

\draw[fill=white] (1,2) rectangle (3,3);
\draw (1,2) node[above right =0.1cm] {\small $ \frac{1}{n} w_2$};

\draw[fill=white] (1,4) rectangle (3,5);
\draw (1,4) node[above right =0.1cm] {\small $ \frac{1}{n} w_{n-1}$};

\draw[fill=white] (1,5) rectangle (3,6);
\draw (1,5) node[above right =0.1cm] {\small $ \frac{1}{n} w_{n}$};

\draw[fill=black!20!white] (1,6) rectangle (5,7);
\draw[fill=black!20!white] (1,8) rectangle (5,10);

\draw[fill=white] (3,0) rectangle (5,1);
\draw (3,0) node[above right =0.1cm] {\small $ \frac{\gamma_1}{n} s_1$};

\draw[fill=white] (3,1) rectangle (5,2);
\draw (3,1) node[above right =0.1cm] {\small $ \frac{1}{n} w_1$};

\draw[fill=white] (3,2) rectangle (5,3);
\draw (3,2) node[above right =0.1cm] {\small $ \frac{1}{n} w_2$};

\draw[fill=white] (3,4) rectangle (5,5);
\draw (3,4) node[above right =0.1cm] {\small $ \frac{1}{n} w_{n-1}$};

\draw[fill=white] (3,5) rectangle (5,6);
\draw (3,5) node[above right =0.1cm] {\small $ \frac{1}{n} w_{n}$};

\draw[fill=black!20!white] (3,6) rectangle (5,7);
\draw[fill=black!20!white] (3,8) rectangle (5,10);

\draw[fill=white] (7,0) rectangle (9,1);
\draw (7,0) node[above right =0.1cm] {\small $ \frac{\gamma_1}{n} s_1$};

\draw[fill=white] (7,1) rectangle (9,2);
\draw (7,1) node[above right =0.1cm] {\small $ \frac{1}{n} w_1$};

\draw[fill=white] (7,2) rectangle (9,3);
\draw (7,2) node[above right =0.1cm] {\small $ \frac{1}{n} w_2$};

\draw[fill=white] (7,4) rectangle (9,5);
\draw (7,4) node[above right =0.1cm] {\small $ \frac{1}{n} w_{n-1}$};

\draw[fill=white] (7,5) rectangle (9,6);
\draw (7,5) node[above right =0.1cm] {\small $ \frac{1}{n} w_{n}$};

\draw[fill=black!20!white] (7,6) rectangle (9,7);
\draw[fill=black!20!white] (7,8) rectangle (9,10);

\draw[fill=white] (9,0) rectangle (11,1);
\draw (9,0) node[above right =0.1cm] {\small $ \frac{\gamma_1}{n} s_1$};

\draw[fill=white] (9,1) rectangle (11,2);
\draw (9,1) node[above right =0.1cm] {\small $ \frac{1}{n} w_1$};

\draw[fill=white] (9,2) rectangle (11,3);
\draw (9,2) node[above right =0.1cm] {\small $ \frac{1}{n} w_2$};

\draw[fill=white] (9,4) rectangle (11,5);
\draw (9,4) node[above right =0.1cm] {\small $ \frac{1}{n} w_{n-1}$};

\draw[fill=white] (9,5) rectangle (11,6);
\draw (9,5) node[above right =0.1cm] {\small $ \frac{1}{n} w_{n}$};

\draw[fill=black!20!white] (9,6) rectangle (11,7);
\draw[fill=black!20!white] (9,8) rectangle (11,10);

\draw[dashed] (7,2)--(7,3);
\draw[dashed] (9,2)--(9,3);
\draw[dashed] (11,2)--(11,3);

\draw (6+13,3) node[below =0.7cm] {$\cdots$};
\draw (6+13,7) node[below =0.7cm] {$\cdots$};

\draw[fill=white] (1+13,1.5) rectangle (3+13,7);
\draw (1+13,1.5) node[above right =0.2cm] {\small $w_1$};

\draw[fill=white] (1+13,0) rectangle (3+13,1.5);
\draw (1+13,0) node[above right =0.2cm] {\small $s_1$};

\draw[fill=black!20!white] (1+13,7) rectangle (3+13,8);
\draw[fill=black!20!white] (1+13,9) rectangle (3+13,10);

\draw[fill=white] (3+13,1.5) rectangle (5+13,7);
\draw (3+13,1.5) node[above right =0.2cm] {\small $w_{2}$};

\draw[fill=white] (3+13,0) rectangle (5+13,1.5);
\draw (3+13,0) node[above right =0.2cm] {\small $s_{1}$};

\draw[fill=black!20!white] (3+13,7) rectangle (5+13,8);
\draw[fill=black!20!white] (3+13,9) rectangle (5+13,10);

\draw[fill=white] (7+13,1.5) rectangle (9+13,7);
\draw (7+13,1.5) node[above right =0.2cm] {\small $w_{n-1}$};

\draw[fill=white] (7+13,0) rectangle (9+13,1.5);
\draw (7+13,0) node[above right =0.2cm] {\small $s_{1}$};

\draw[fill=black!20!white] (7+13,7) rectangle (9+13,8);
\draw[fill=black!20!white] (7+13,9) rectangle (9+13,10);

\draw[fill=white] (9+13,1.5) rectangle (11+13,7);
\draw (9+13,1.5) node[above right =0.2cm] {\small $w_{n}$};

\draw[fill=white] (9+13,0) rectangle (11+13,1.5);
\draw (9+13,0) node[above right =0.2cm] {\small $s_{1}$};

\draw[fill=black!20!white] (9+13,7) rectangle (11+13,8);
\draw[fill=black!20!white] (9+13,9) rectangle (11+13,10);

\draw (2+13,10.45) node[below =0.9cm] {$///$};
\draw (4+13,10.45) node[below =0.9cm] {$///$};

\draw (8+13,10.45) node[below =0.9cm] {$///$};
\draw (10+13,10.45) node[below =0.9cm] {$///$};

\draw (14.1,7) node[left=0.25cm] {\small{$\vartheta + 1$}};

\draw (2+13,0) node[below =0.5cm] {bin $1$};
\draw (4+13,0) node[below =0.5cm] {bin $2$};
\draw (8+13,0) node[below =0.5cm] {bin $n \, \text{-} 1$};
\draw (10+13,0) node[below =0.5cm] {bin $n$};

\draw (6+13,-1.5) node[below =0.5cm] {(b) an optimal BPPS solution};

\end{tikzpicture}
\end{center}

\caption{The left part of the figure illustrates an optimal solution to~$\ftwoLP$ for the worst-case family of instances used in the proof of Proposition~\ref{prop:worstcase}.  
Each bin is filled up to the value $\vartheta + \frac{\binnumberLBclass_1}{n}$.  
The blocks represent the fraction of the item weights and of the class setup weight assigned to each bin.  
The right part of the figure illustrates an optimal \BPPS{} solution to the same instance, in which at most one item can be packed in each bin of capacity $2\;\vartheta$, since two items together with the setup weight do not fit.
}
   \label{fig:example_bound_LC+}
\end{figure}

Propositions \ref{prop:ratio_MCI} and \ref{prop:worstcase} show that, with the inclusion of the MCIs, we can recover, for the BPPS, the classical result of a tight worst-case performance ratio of $1/2$ for the natural BPP formulation.

\subsection{Minimum Bins Inequality} 
\label{sec:MBI}

To further strengthen $\ftwoLP$, we introduce the following inequality, called the \emph{Minimum Bins Inequality} (MBI):
\begin{equation}\label{MBI}
  \sum_{b \in \bins} z_{b} \;\geq\; 
  \underbrace{\Biggl\lceil\frac{\sum_{i \in \items} w_i \;+\; \sum_{c \in \classes} \binnumberLBclass_c \, s_c}{d} \Biggr\rceil}_{=\binnumberLB},
\end{equation}
where the right-hand side $\binnumberLB$ is a lower bound on the minimum number of bins required to pack all items in any optimal BPPS solution while also accounting for setup weights.  
It is defined as the ceiling of the ratio between the sum of all item weights plus, for each class $c \in \classes$, its setup weight $s_c$ counted $\binnumberLBclass_c$ times and the bin capacity $d$.  
This inequality enforces that the total number of opened bins is sufficient to accommodate both item weights and the class-induced setup weights, thereby excluding fractional solutions of~$\ftwoLP$ that would otherwise underestimate the overall capacity requirement.  
The right-hand side of the MBI~\eqref{MBI} can be efficiently computed in linear time with respect to the number of items~$n$  of a \BPPS{} instance.
The following proposition shows that the MBI in~\eqref{MBI} is also a valid inequality for~$\fone$.

\begin{proposition}\label{prop:mbi}
For every \BPPS{} instance, the Minimum Bins Inequality~\eqref{MBI} is satisfied by every feasible solution of~$\fone$.
\end{proposition}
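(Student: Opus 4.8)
The plan is to derive the MBI by aggregating the capacity constraints~\eqref{con:kanto-cap} over all bins and then feeding in the validity of the MCIs already established in Proposition~\ref{prop:mcoi}. Concretely, I would fix an arbitrary feasible solution $(\boldsymbol{x},\boldsymbol{y},\boldsymbol{z})$ of $\fone$ and sum~\eqref{con:kanto-cap} over $b \in \bins$, obtaining after swapping the order of summation
\[
\sum_{i \in \items} w_i \sum_{b \in \bins} x_{ib} \;+\; \sum_{c \in \classes} s_c \sum_{b \in \bins} y_{cb} \;\le\; d \sum_{b \in \bins} z_b .
\]
By the assignment constraints~\eqref{con:kanto-ass} we have $\sum_{b \in \bins} x_{ib} = 1$ for every $i \in \items$, so the first term on the left equals $\sum_{i \in \items} w_i$.

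Next I would use that the same feasible solution satisfies the MCIs~\eqref{MCI} by Proposition~\ref{prop:mcoi}, i.e., $\sum_{b \in \bins} y_{cb} \ge \binnumberLBclass_c$ for each $c \in \classes$; since $s_c \ge 0$, this gives $\sum_{c \in \classes} s_c \sum_{b \in \bins} y_{cb} \ge \sum_{c \in \classes} \binnumberLBclass_c \, s_c$. Substituting both estimates into the aggregated inequality yields
\[
d \sum_{b \in \bins} z_b \;\ge\; \sum_{i \in \items} w_i \;+\; \sum_{c \in \classes} \binnumberLBclass_c \, s_c ,
\]
and therefore $\sum_{b \in \bins} z_b \ge \bigl(\sum_{i \in \items} w_i + \sum_{c \in \classes} \binnumberLBclass_c \, s_c\bigr)/d$.

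Finally, I would invoke integrality: since $z_b \in \{0,1\}$ for all $b \in \bins$, the sum $\sum_{b \in \bins} z_b$ is a nonnegative integer, and an integer bounded below by a real number is bounded below by its ceiling, which delivers $\sum_{b \in \bins} z_b \ge \binnumberLB$. I do not expect any genuine obstacle here: the argument is a one-line aggregation followed by a rounding step. The only point that needs care is that the proof leans on Proposition~\ref{prop:mcoi}, so the MBI is most cleanly presented as a corollary of the MCIs; alternatively one could re-establish $\sum_{b \in \bins} y_{cb} \ge \binnumberLBclass_c$ inline by the same capacity/assignment/integrality reasoning applied class by class, but reusing Proposition~\ref{prop:mcoi} keeps the proof short and transparent.
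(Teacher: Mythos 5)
Your proof is correct and follows essentially the same route as the paper's: aggregate the capacity constraints over all bins, substitute the assignment constraints and the already-established MCIs to lower-bound the left-hand side, and then round up using the integrality of the $z$-variables. The only cosmetic difference is that the paper reaches the pre-rounding inequality by citing the intermediate bound~\eqref{eq:LB_ftwoLP} from the proof of Proposition~\ref{prop:lprelax_2} rather than re-aggregating the constraints explicitly; your direct derivation is, if anything, slightly more self-contained.
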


The proof is based on using the constraints of $\foneLP$,  together with the MCIs and the integrality of the $z$ variables, to derive the MBI for all integer feasible solutions.
We denote by $\ftwobis$ the formulation obtained by adding the MCIs~\eqref{MCI} and the MBI~\eqref{MBI}~to $\fone$, and by $\ftwoLPbis$ its LP relaxation.  
We let $\zeta(\ftwoLPbis)$ denote the optimal objective function value of $\ftwoLPbis$, which provides a lower bound on the optimal objective function value~\(\opt\) of the \BPPS{}, and is
at least as strong as~$\zeta(\ftwoLP)$.

\subsection{Strength of the LP relaxations of $\,\ftwobis$} \label{sec:strength3}

Not only $\foneLP$ and $\ftwoLP$, but also $\ftwoLPbis$ admits closed-form expressions for both an optimal solution to $\ftwoLPbis$ and its optimal objective function value, as shown by the following proposition.

\begin{proposition}\label{prop:lprelax_3}
For every \BPPS{} instance, an optimal solution to $\ftwoLPbis$ is
\begin{equation}\label{sol:LP2bis}
 x_{ib} = \frac 1 \binnumberUB, ~ i \in \items, \; b \in \bins, 
 ~~~~~
 y_{cb} = \frac{\binnumberLBclass_c}{\binnumberUB}, ~ c \in \classes, \; b \in \bins, 
 ~~~~~
 z_{b} = \frac \binnumberLB \binnumberUB ,  ~  b \in \bins, 
\end{equation}
and its optimal objective function value is
\begin{equation}\label{eq:LPoptval_2bis}
\zeta(\ftwoLPbis) \;=\; 
\bincost \, \binnumberLB 
\;+\; \sum_{c \in \classes} \binnumberLBclass_c \, f_c.
\end{equation}
\end{proposition}

While the MBI~\eqref{MBI} provides a global lower bound on the total number of bins that must be used, the MCIs~\eqref{MCI} operate at the class level by enforcing that each class $c \in \classes$ is activated in at least $\binnumberLBclass_c$ bins. These two families of inequalities are therefore complementary: the MBI~\eqref{MBI} constrains only the aggregate number of bins, without distinguishing which classes are responsible for their activation, whereas the MCIs~\eqref{MCI} ensure that the contribution of each class is explicitly enforced. 

\blue{The following proposition shows that the worst-case performance ratio of the lower bound $\zeta(\ftwoLPbis)$\,\eqref{eq:LPoptval_2bis} is $1/2$\Rev{, equal to that of $\zeta(\ftwoLP)$}.
\begin{proposition}\label{prop:worstcasebis}
$$\ratio(\zeta(\ftwoLPbis)) = \frac{1}{2}.$$
\end{proposition}
}

The proof employs the same instance of the BPPS used in Proposition \ref{prop:worstcase}, thereby showing the analogous result for the model $\ftwoLPbis$.
In our computational experiments of Section \ref{sec:computationals}, we empirically measure the actual improvement of the lower bound provided by $\ftwoLP$ and $\ftwoLPbis$ over $\foneLP$ across the tested instance classes.

\subsection{An upper bound to the number of bins used in any optimal BPPS solution} \label{sec:bin_upper_bound}

An obvious valid choice for the upper bound~$\binnumberUB$ on the number of bins in any optimal \BPPS{} solution is the number of items~$n$, since no solution can use more than one bin per item. This bound, however, is often weak in practice, as the number of bins used is typically much smaller than~$n$. Moreover, the value of~$\binnumberUB$ directly determines the number of variables and constraints in the formulation and thus strongly impacts computational performance. To address this issue, we develop a stronger, instance-dependent upper bound based on class-wise considerations. Specifically, for each class $c \in \classes$, let $\overline{\beta}_c$ denote an upper bound on the minimum number of bins of capacity $d-s_c$ required to pack all its items.

\begin{proposition} \label{upper_bound_bins}
For every \BPPS{} instance, the value 
\begin{equation}\label{eq:bin_UB}    
\hat{\binnumberUB} = \sum_{c \in \classes} \overline{\beta}_c,
\end{equation}
provides an upper bound on the number of bins used in any optimal \BPPS{} solution.
\end{proposition}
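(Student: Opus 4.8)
The plan is to bound the number of bins used by an arbitrary optimal solution by comparing that solution against the \emph{class-separated} packing, in which every class occupies its own dedicated bins. First I would isolate one elementary structural fact. For each class $c \in \classes$, let $\beta_c$ be the minimum number of bins of capacity $d - s_c$ needed to pack all items of $\items_c$; this quantity is finite (the feasibility assumption $w_i + s_c \le d$ guarantees each item of $\items_c$ fits in such a bin) and satisfies $\beta_c \le \overline{\beta}_c$ by definition of $\overline{\beta}_c$. I claim that, in \emph{every} feasible \BPPS{} solution, class $c$ is active in at least $\beta_c$ bins: whenever class $c$ is active in a bin, that bin reserves $s_c$ units of capacity for the setup, so it contains class-$c$ items of total weight at most $d - s_c$; hence the class-$c$ items of the solution constitute a feasible packing into (number of bins active for $c$) bins of capacity $d - s_c$, so their number is at least $\beta_c$.

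Next I would exhibit the class-separated solution $\widehat{S}$: for each $c \in \classes$, pack $\items_c$ into $\beta_c$ bins of capacity $d - s_c$, each such bin having only class $c$ active. This is a feasible \BPPS{} solution that uses exactly $\sum_{c \in \classes} \beta_c$ bins and costs $\sum_{c \in \classes} \beta_c\,(\bincost + f_c)$. Now take any optimal solution $S^\star$, of value $\opt$. Since $\bincost > 0$, keeping an empty bin open is never optimal, so every used bin of $S^\star$ is nonempty; let $\kappa$ be its number of used bins and, for each $c$, let $b_c$ be the number of bins in which class $c$ is active in $S^\star$. Each nonempty bin is active for at least one class, hence $\sum_{c \in \classes} b_c \ge \kappa$. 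Suppose, for contradiction, that $\kappa > \sum_{c \in \classes}\beta_c$. Using $b_c \ge \beta_c$, the nonnegativity of the $f_c$, and $\bincost \ge 1$,
\[
\opt \;=\; \kappa\,\bincost + \sum_{c \in \classes} b_c\,f_c
\;\ge\; \Bigl(\sum_{c \in \classes}\beta_c + 1\Bigr)\bincost + \sum_{c \in \classes}\beta_c\,f_c
\;>\; \sum_{c \in \classes}\beta_c\,(\bincost + f_c),
\]
and the right-hand side is the cost of the feasible solution $\widehat{S}$, contradicting the optimality of $S^\star$. Therefore $\kappa \le \sum_{c \in \classes}\beta_c \le \sum_{c \in \classes}\overline{\beta}_c = \overline{\binnumberUB}$, which is the claim.

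The one point that needs care is that both the structural fact and the construction of $\widehat{S}$ must be phrased in terms of the \emph{exact} per-class minima $\beta_c$, not of the bounds $\overline{\beta}_c$: replacing $\beta_c$ by $\overline{\beta}_c$ in the final chain of inequalities would break it, since $b_c$ is controlled from below only by $\beta_c$ (and one can indeed have $b_c>\overline\beta_c$ in an optimal solution when $\beta_c<\overline\beta_c$). The rest is routine bookkeeping, and it is worth emphasizing that the strict inequality above, and hence the whole argument, relies essentially on $\bincost$ being strictly positive.
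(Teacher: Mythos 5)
Your proof is correct and follows essentially the same route as the paper's: you compare the optimal solution against the class-separated packing that uses $\beta_c$ bins per class (the paper's Step~1 heuristic), and you use the fact that any feasible solution must activate class $c$ in at least $\beta_c$ bins so that the setup-cost terms cancel, leaving the bound $\sum_{b\in\bins} z_b \le \sum_{c\in\classes}\beta_c \le \overline{\binnumberUB}$. The only difference is presentational (contradiction versus direct chaining of the two inequalities), and your closing remark about why the argument must run through the exact minima $\beta_c$ rather than the bounds $\overline{\beta}_c$ matches the paper's own final step.
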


The proof is based on comparing the value of an optimal solution to the BPPS with the value of the first heuristic solution also considered in the proof of Proposition \ref{prop:ratio_MCI}, which is obtained by solving a BPP of capacity $d-s_c$ for every class $c\in\classes$. 

The values $\overline{\beta}_c$, with $c \in \classes$, can be determined by applying any algorithm that provides 
a feasible solution to this  \BPP{} associated with class~$c$, since such an algorithm would yield an upper bound on the number of bins required. 
In practice, both exact and heuristic algorithms for the \BPP{} can be employed for this 
purpose. The specific choices adopted to compute these values in our computational 
experiments will be discussed in Section~\ref{sec:computationals}.

{\color{black}
\section{An arc-flow ILP formulation for the BPPS}
\label{sec:arcflow}

In this section, we introduce an arc-flow formulation for the \BPPS{} by extending the approach of~\citet{Brand16}.
Originally introduced for the BPP by~\citet{deCarvalho1999}, arc-flow formulations model feasible packing patterns as source-to-sink paths in a \emph{directed acyclic graph} (DAG), using variables that represent flows on individual arcs of the DAG.
We refer to~\citet{DELIMA20223} for a recent survey on the dynamic-programming foundations of arc-flow formulations.

In the classical bin packing setting, a path encodes a packing pattern whose total weight does not exceed the bin capacity.
In the \BPPS{}, however, the feasibility and cost of a packing pattern also depend on the classes of the items it contains, since each active class consumes its setup weight and contributes its setup cost.
We encode these class activations directly in the DAG by introducing, for each class, a setup arc that must be traversed before any item of that class can be packed.
Thus, each feasible packing pattern, including both the packed items and the classes activated in the bin, is represented by a source-to-sink path in the DAG.
The associated arc-flow formulation then selects a minimum-cost collection of feasible packing patterns, while assignment constraints ensure that every item is packed exactly once.

\subsection{DAG construction}
\label{sec:afConstruction}

To derive the arc-flow formulation for the \BPPS{}, we construct a DAG that is denoted by $\mathcal{G} = (\mathcal{V}, \mathcal{A})$, with a \emph{source} $\sigma$ and a \emph{sink} $\tau$. Every source-to-sink path in this DAG encodes a feasible \BPPS{} packing pattern, whose total weight of packed items and active classes does not exceed the bin capacity.
Following the arc-flow paradigm based on dynamic programming~\citep{DELIMA20223}, each vertex of $\mathcal{G}$ corresponds to a dynamic programming \emph{state}.
The structure of the DAG is described by two dimensions: the \emph{stage} dimension, which represents the sequence of setup, item, and class-transition decisions, and the \emph{load} dimension, which records the current total weight in the bin.
Specifically, given a fixed ordering of the items, in the standard DAG construction for the \BPP{} a state is identified by a pair $(\ell, t)$, where $\ell \in \{0,1,\dots,d\}$ is the accumulated load, i.e., the total item weight packed so far, and $t$ is the stage, i.e., the position in the item ordering after the first $t$ items have been considered.
In the DAG for the \BPPS{}, we consider an ordering of the classes and of the items within each class. The sequence of stages is then enriched by adding setup stages and class-transition stages. Furthermore, in the \BPPS{} DAG, the accumulated load accounts for both the packed item weights and the active classes.
To account for class setups, we add one state component: two states with the same stage and load may lead to different subsequent decisions depending on whether the current class setup has already been activated and whether at least one item of that class has already been packed.
We therefore extend each state with an \emph{active flag} $e \in \{-1, 0, 1\}$, so that every vertex $v \in \mathcal{V}$ is characterized by a triple $(\ell, t, e)$.
The active flag~$e$ reflects the status of the current class along the path: $e = -1$ marks an entry/exit vertex, $e = 0$ marks a vertex at which the setup weight of the current class has been reserved but no item of that class has yet been packed, and $e = 1$ marks a vertex at which at least one item of the current class has been packed.

All vertices sharing the same stage index~$t$ form \emph{layer}~$t$.
These layers are of two types.
For each class $c \in \classes$, the \emph{class layer} of~$c$ consists of $|\items_c| + 1$ consecutive layers: one \emph{setup layer}, in which the setup weight of class~$c$ is reserved, followed by $|\items_c|$ \emph{item layers}, one per item of class~$c$.
Separating consecutive class layers are single \emph{entry/exit layers}.
These layers contain the boundary vertices from which a class can either be entered through its setup arc or bypassed entirely.
Setup and item arcs connect consecutive layers, whereas class-bypass arcs connect directly the entry and exit layers of a class.
The source is $\sigma = (0, 0, -1)$.
Each arc $a \in \mathcal{A}$ has a \emph{weight} $\omega_a \in \mathbb{Z}_{\ge 0}$ and a \emph{cost} $\lambda_a \in \mathbb{Z}_{\ge 0}$.
The weight $\omega_a$ represents the load consumed by traversing arc~$a$, while the cost $\lambda_a$ represents the contribution of arc~$a$ to the objective function.
In particular, setup arcs which model the activation of class $c \in \classes$ have weight $s_c$ and cost $f_c$, item arcs which model the packing of item~$i \in \items$ have weight $w_i$ and zero cost, bypass arcs have zero weight and zero cost, and loss arcs to the sink have zero weight and cost~$\bincost$.
For each item $i \in \items$, we denote by $\itemarcs \subseteq \mathcal{A}$ the set of item arcs that pack item~$i$.
Similarly, for each class $c \in \classes$, we denote by $\classarcs \subseteq \mathcal{A}$ the set of setup arcs that activate class~$c$.

The DAG is built by processing classes $c \in \classes$ in the order induced by the input instance. Let $t_{\mathrm{prev}}$ denote the stage index of the current entry/exit layer, initialized to $t_{\mathrm{prev}} = 0$. For each class~$c$, the construction proceeds as follows.

A \emph{bypass arc} (weight $0$, cost $0$) from each entry/exit vertex $(\ell, t_{\mathrm{prev}}, -1)$ to $(\ell, t_{\mathrm{exit}}, -1)$, where $t_{\mathrm{exit}} = t_{\mathrm{prev}} + |\items_c| + 2$, encodes packing patterns in which class~$c$ is absent.
For each entry/exit vertex $(\ell, t_{\mathrm{prev}}, -1)$ with $\ell + s_c \le d$, a \emph{setup arc} (weight $s_c$, cost $f_c$) to $(\ell + s_c,\; t_{\mathrm{prev}} + 1,\; 0)$ reserves the setup weight of class~$c$ and activates it in the current bin.

Items of class~$c$ are then processed in non-increasing order of weight, starting from stage $t = t_{\mathrm{prev}} + 1$. For each item $i \in \items_c$ and every vertex $(\ell, t, e)$ at stage~$t$, two arcs are added: a bypass arc to $(\ell, t+1, e)$, skipping item~$i$; and, if $\ell + w_i \le d$, an \emph{item arc} (weight $w_i$, cost $0$) to $(\ell + w_i,\; t+1,\; 1)$, packing item~$i$. After processing all items of class~$c$, a bypass arc from each vertex $(\ell, t_{\mathrm{exit}} - 1, 1)$ transitions to the exit layer $(\ell, t_{\mathrm{exit}}, -1)$. Vertices with flag $e = 0$ at stage $t_{\mathrm{exit}} - 1$---representing states in which class~$c$ was activated but none of its items was packed---are dominated and receive no outgoing arc.

After processing class~$c$, its exit layer becomes the entry layer for the next class, and $t_{\mathrm{prev}}$ is updated to $t_{\mathrm{exit}}$.
After the last class, a \emph{loss arc} (weight $0$, cost $\bincost$) connects each vertex of the final entry/exit layer with load $\ell \ge 1$ to~$\tau$.
Thus, the bin-opening cost is charged exactly once for every non-empty source-to-sink path, independently of the first class activated along the path.

Finally, a backward breadth-first search from~$\tau$ marks all vertices from which the sink is reachable; all unmarked vertices and their incident arcs are then removed. This step eliminates residual vertices at which a class was activated but the DAG contains no forward path reaching~$\tau$ with at least one item of that class packed. As a consequence, in the resulting uncompressed DAG, every path traversing a setup arc of class~$c$ packs at least an item $i \in \items_c$.

Figure~\ref{fig:af_construction} shows the DAG obtained after the construction phase for the example instance of Figure~\ref{fig:1}.

\begin{sidewaysfigure}
\centering
\resizebox{\textheight}{!}{%
{\color{black}
\tikzset{
  afentry/.style={draw, rectangle, rounded corners=1pt,
                  minimum size=5mm, fill=gray!30,
                  font=\tiny\bfseries, inner sep=1pt},
  afano1/.style={draw=green!55!black, circle, minimum size=5mm,
                 fill=white, font=\tiny, inner sep=1pt, thick},
  afayes1/.style={draw=green!55!black, circle, minimum size=5mm,
                  fill=green!25, font=\tiny, inner sep=1pt, thick},
  afano2/.style={draw=blue!65!black, circle, minimum size=5mm,
                 fill=white, font=\tiny, inner sep=1pt, thick},
  afayes2/.style={draw=blue!65!black, circle, minimum size=5mm,
                  fill=blue!20, font=\tiny, inner sep=1pt, thick},
  afdead/.style={dashed, fill=gray!8, draw=gray!45},
  afsource/.style={draw, rectangle, minimum size=5mm, fill=gray!50,
                 font=\tiny\bfseries, inner sep=1pt},
  afsink/.style={draw, rectangle, minimum size=5mm, fill=gray!50,
                 font=\tiny\bfseries, inner sep=1pt},
  afstep3/.style={draw, circle, minimum size=5mm,
                  fill=white, font=\tiny, inner sep=1pt},
  afsetup/.style={->, thick, orange!85!black},
  afitem1/.style={->, green!50!black, thin},
  afitem2/.style={->, blue!65!black, thin},
  afbyp/.style={->, gray!65, very thin},
  afexit/.style={->, gray!45, thin},
  afloss/.style={->, black, dashed, very thin},
  afcbp/.style={->, gray!65, very thin},  % class-bypass lungo
}

\begin{tikzpicture}[
  x=1.28cm, y=0.9cm,
  >=stealth,
]

\node[font=\tiny, gray!70] at (-0.9,   0) {$d=0$};
\node[font=\tiny, gray!70] at (-0.9,   1) {$d=1$};
\node[font=\tiny, gray!70] at (-0.9,   2) {$d=2$};
\node[font=\tiny, gray!70] at (-0.9,   3) {$d=3$};
\node[font=\tiny, gray!70] at (-0.9,   4) {$d=4$};
\node[font=\tiny, gray!70] at (-0.9,   5) {$d=5$};
\node[font=\tiny, gray!70] at (-0.9,   6) {$d=6$};

\fill[gray!20]   (-0.55,-1.5) rectangle (0.55,7.5);   % stage 0
\fill[green!10]  (0.55,-1.5)  rectangle (5.55,7.5);   % stage 1-5
\fill[gray!12]   (5.55,-1.5)  rectangle (6.55,7.5);   % stage 6
\fill[blue!10]   (6.55,-1.5)  rectangle (11.55,7.5);  % stage 7-11
\fill[gray!12]   (11.55,-1.5) rectangle (12.55,7.5);  % stage 12
\fill[gray!20]   (12.55,-1.5) rectangle (13.55,7.5);

\foreach \s/\lbl in {0/0,1/1,2/2,3/3,4/4,5/5,6/6,7/7,8/8,9/9,10/10,11/11,12/12,13/}{
  \node[font=\scriptsize, gray!70] at (\s, -1.8) {$\lbl$};
}

\foreach \s/\lbl in {1.5/1,2.5/2,3.5/3,4.5/4}{
  \node[font=\tiny, green!60!black] at (\s, 7.3) {$i = \lbl$};
}

\foreach \s/\lbl in {7.5/5,8.5/6,9.5/7,10.5/8}{
  \node[font=\tiny, blue!60!black] at (\s, 7.3) {$i = \lbl$};
}

\node[font=\scriptsize, green!60!black] at (3.05,   7.7) {$c=1$};
\node[font=\scriptsize, gray!60] at (6.05,   7.7) {entry/exit};
\node[font=\scriptsize, blue!60!black]  at (9.05,   7.7) {$c=2$};
\node[font=\scriptsize, gray!60] at (12.05,  7.7) {entry/exit};

\node[afsource] (S)     at (0,0)  {\footnotesize $\sigma$};
\node[afentry] (e6_0)  at (6,0)   {$0$};
\node[afentry] (e6_4)  at (6,4)   {$4$};
\node[afentry] (e12_2) at (12,2)  {$2$};
\node[afentry] (e12_3) at (12,3)  {$3$};
\node[afentry] (e12_4) at (12,4)  {$4$};
\node[afentry] (e12_5) at (12,5)  {$5$};
\node[afentry] (e12_6) at (12,6)  {$6$};

\node[afano1] (n1_1)  at (1,1) {$1$};
\node[afano1] (n2_1)  at (2,1) {$1$};
\node[afano1] (n3_1)  at (3,1) {$1$};
\node[afano1] (n4_1)  at (4,1) {$1$};

\node[afayes1] (n2_4) at (2,4) {$4$};
\node[afayes1] (n3_4) at (3,4) {$4$};
\node[afayes1] (n4_4) at (4,4) {$4$};
\node[afayes1] (n5_4) at (5,4) {$4$};

\node[afano2] (n7_1)  at (7,1)  {$1$};
\node[afano2] (n7_5)  at (7,5)  {$5$};
\node[afano2] (n8_1)  at (8,1)  {$1$};
\node[afano2] (n8_5)  at (8,5)  {$5$};
\node[afano2] (n9_1)  at (9,1)  {$1$};
\node[afano2] (n9_5)  at (9,5)  {$5$};
\node[afano2] (n10_1) at (10,1) {$1$};
\node[afano2] (n10_5) at (10,5) {$5$};

\node[afayes2] (n8_2)  at (8,2)     {$2$};
\node[afayes2] (n8_6)  at (8,6)     {$6$};
\node[afayes2] (n9_2)  at (9,2)     {$2$};
\node[afayes2] (n9_3)  at (9,3)     {$3$};
\node[afayes2] (n9_6)  at (9,6)     {$6$};
\node[afayes2] (n10_2) at (10,2)    {$2$};
\node[afayes2] (n10_3) at (10,3)    {$3$};
\node[afayes2] (n10_4) at (10,4)    {$4$};
\node[afayes2] (n10_6) at (10,6)    {$6$};
\node[afayes2] (n11_2) at (11,2)    {$2$};
\node[afayes2] (n11_3) at (11,3)    {$3$};
\node[afayes2] (n11_4) at (11,4)    {$4$};
\node[afayes2] (n11_5) at (11,5) {$5$};  % offset dx (stesso load, alive)
\node[afayes2] (n11_6) at (11,6)    {$6$};

\node[afsink]  (T) at (13,6) {\footnotesize $\tau$};

\draw[afsetup] (S) -- node[above, sloped, font=\tiny]{$(2, 1)$} (n1_1);

\draw[afcbp] (S) to
  node[below, font=\tiny]{$(0, 0)$} (e6_0);

\draw[afitem1] (n1_1)  -- node[above, sloped, font=\tiny]{$3$} (n2_4);
\draw[afitem1] (n2_1)  -- node[above, sloped, font=\tiny]{$3$} (n3_4);
\draw[afitem1] (n3_1)  -- node[above, sloped, font=\tiny]{$3$} (n4_4);
\draw[afitem1] (n4_1)  -- node[above, sloped, font=\tiny]{$3$} (n5_4);

\draw[afbyp] (n1_1) -- (n2_1);
\draw[afbyp] (n2_1) -- (n3_1);
\draw[afbyp] (n3_1) -- (n4_1);

\draw[afbyp] (n2_4) -- (n3_4);
\draw[afbyp] (n3_4) -- (n4_4);
\draw[afbyp] (n4_4) -- (n5_4);

\draw[afexit] (n5_4) -- (e6_4);

\draw[afsetup] (e6_0) -- node[above, sloped, font=\tiny]{$(3, 1)$} (n7_1);

\draw[afsetup] (e6_4) -- node[above, sloped, font=\tiny]{$(3,1)$} (n7_5);

\draw[afcbp] (e6_4) to[bend left=13] (e12_4);

\draw[afitem2] (n7_1)  -- node[above, sloped, font=\tiny]{$1$} (n8_2);
\draw[afitem2] (n7_5)  --  node[above, sloped, font=\tiny]{$1$} (n8_6);
\draw[afitem2] (n8_1)  --  node[above, sloped, font=\tiny]{$1$} (n9_2);
\draw[afitem2] (n8_2)  --  node[above, sloped, font=\tiny]{$1$} (n9_3);
\draw[afitem2] (n8_5)  --  node[above, sloped, font=\tiny]{$1$} (n9_6);
\draw[afitem2] (n9_1)  --  node[above, sloped, font=\tiny]{$1$} (n10_2);
\draw[afitem2] (n9_2)  --  node[above, sloped, font=\tiny]{$1$} (n10_3);
\draw[afitem2] (n9_3)  --  node[above, sloped, font=\tiny]{$1$} (n10_4);
\draw[afitem2] (n9_5)  --  node[above, sloped, font=\tiny]{$1$} (n10_6);
\draw[afitem2] (n10_1) --  node[above, sloped, font=\tiny]{$1$} (n11_2);
\draw[afitem2] (n10_2) --  node[above, sloped, font=\tiny]{$1$} (n11_3);
\draw[afitem2] (n10_3) --  node[above, sloped, font=\tiny]{$1$} (n11_4);
\draw[afitem2] (n10_4) --  node[above, sloped, font=\tiny]{$1$} (n11_5);
\draw[afitem2] (n10_5) --  node[above, sloped, font=\tiny]{$1$} (n11_6);

\draw[afbyp] (n7_1)  -- (n8_1);
\draw[afbyp] (n7_5)  -- (n8_5);
\draw[afbyp] (n8_1)  -- (n9_1);
\draw[afbyp] (n8_2)  -- (n9_2);
\draw[afbyp] (n8_5)  -- (n9_5);
\draw[afbyp] (n8_6)  -- (n9_6);
\draw[afbyp] (n9_1)  -- (n10_1);
\draw[afbyp] (n9_2)  -- (n10_2);
\draw[afbyp] (n9_3)  -- (n10_3);
\draw[afbyp] (n9_5)  -- (n10_5);
\draw[afbyp] (n9_6)  -- (n10_6);
\draw[afbyp] (n10_2) -- (n11_2);
\draw[afbyp] (n10_3) -- (n11_3);
\draw[afbyp] (n10_4) -- (n11_4);
\draw[afbyp] (n10_6) -- (n11_6);

\draw[afexit] (n11_2) -- (e12_2);
\draw[afexit] (n11_3) -- (e12_3);
\draw[afexit] (n11_4) -- (e12_4);
\draw[afexit] (n11_5) -- (e12_5);
\draw[afexit] (n11_6) -- (e12_6);

\draw[afloss] (e12_2) -- (T);
\draw[afloss] (e12_3) -- (T);
\draw[afloss] (e12_4) -- (T);
\draw[afloss] (e12_5) -- (T);
\draw[afloss] (e12_6) -- (T);

\end{tikzpicture}}%
}
\caption{
  DAG $\mathcal{G} = (\mathcal{V}, \mathcal{A})$ constructed for the \BPPS{} instance of Figure~\ref{fig:1}, with bin capacity $d = 6$, $n = 8$ items partitioned into $m = 2$ classes ($\mathcal{I}_1 = \{1,2,3,4\}$, $\mathcal{I}_2 = \{5,6,7,8\}$), item weights $w_1 = w_2 = w_3 = w_4 = 3$ and $w_5 = w_6 = w_7 = w_8 = 1$, setup weights $s_1 = s_2 = 1$, and setup costs $f_1 = 2$, $f_2 = 3$.
  The DAG is shown before the compression steps of Section~\ref{sec:afCompression}.
  Each vertex $(\ell, t, e) \in \mathcal{V}$ is placed at height $\ell$ on the vertical axis and at position $t$ on the horizontal axis; stages are indicated at the bottom.
  Vertex colors encode the active flag $e$: entry/exit vertices ($e = -1$) are drawn as gray squares; interior vertices with $e = 0$ (setup weight reserved, no item packed yet) are drawn as white circles; vertices with $e = 1$ (at least one item packed) are drawn as filled circles, green for class~$1$ and blue for class~$2$.
  Arc colors encode the arc type.
  Class setup arcs are drawn in orange and labeled with the pair $(\lambda_a, \omega_a)$: the setup arc of class~$1$ has label $(2, 1)$, reflecting setup cost $f_1 = 2$ and weight $s_1 = 1$. The setup arc of class~$2$ has label $(3, 1)$, reflecting cost $f_2 = 3$ and weight $s_2 = 1$.
  Item arcs ($a \in \itemarcs$) are drawn in the color of their class and labeled with $\omega_a = w_i$: arcs corresponding to items of class~$1$ are green and have label~3, while arcs corresponding to items of class~2 are blue and have label~1.
  Bypass arcs are drawn in gray.
  Loss arcs, connecting the final entry/exit layer to the sink~$\tau$, are drawn as dashed black arcs. Each has zero weight and incurs the bin cost $\bincost = 10$; this pair is not displayed as an arc label to keep the figure readable.
  The bypass arc from $\sigma$ to the entry/exit vertex at stage $t = 6$, labeled $(0, 0)$ in gray, encodes bins in which class~1 is entirely absent. Thus, arcs leaving $\sigma$ incur either zero cost or the setup cost of class~1, while the bin cost is charged only on the loss arcs entering~$\tau$.}
\label{fig:af_construction}
\end{sidewaysfigure}

\subsection{DAG compression}
\label{sec:afCompression}

The DAG, built as described in Section~\ref{sec:afConstruction}, is then compressed in two successive steps, corresponding to Steps~3 and~4 of~\citet{Brand16}, respectively.
For each compression step, we refer to the tuple of vertex attributes used to determine whether two vertices can be merged as the \emph{merge key}.
Since the \BPPS{} DAG contains explicit layers for setup, item, and class-bypass decisions, we keep the stage component in the merge key.
Thus, vertices belonging to different layers are never merged. This is a crucial difference with respect to the compression scheme of~\citet{Brand16}, where vertices may be merged even if they belong to different stages, provided that they have the same source and sink labels.
In our setting, merging states of different stages would disregard the role of class setup costs and could therefore weaken the resulting arc-flow formulation.

The idea behind the compression is to identify vertices that play the same role relative to the rest of a source-to-sink path.
In the first step, vertices are relabeled according to the maximum weight that can still be accumulated on a path from that vertex to the sink.
If two vertices are in the same layer $t$ and have the same value of this label, then the same amount of capacity must be left for completing any continuation from them; merging them only moves unused capacity to the beginning of the path, as in the compression procedure of~\citet{Brand16}.
The second step applies the symmetric idea from the source side.
After unused capacity has been shifted toward the beginning of the paths by the first compression step, vertices are relabeled according to the maximum weight that can be accumulated from the source to the vertex.
Vertices in the same layer with the same source label are indistinguishable with respect to the load that can be accumulated before reaching them, and can therefore be merged.
Keeping the stage in both merge keys preserves the order in which setup and item decisions are taken.

The first compression step works as follows. For each vertex $u \in \mathcal{V}$, define the \emph{sink label} $\phi(u, \tau)$ as the weight of the maximum weight path from $u$ to the sink~$\tau$ in $\mathcal{G}$.
Notice that $d - \phi(u, \tau)$ corresponds to the maximum load that a bin can have upon reaching~$u$ in order for every suffix reachable from~$u$ to remain within capacity.
Two vertices $u = (\ell, t, e)$ and $u' = (\ell', t', e')$ are then merged if and only if they share the same stage and the same sink label, i.e., $t = t'$ and $\phi(u, \tau) = \phi(u', \tau)$, resulting in a single new vertex $v = (d - \phi(u, \tau), \, t, \, \cdot )$. Feasibility is preserved under merging: any path entering the merged vertex $v$ has total weight at most $d - \phi(u, \tau)$, and any path leaving from that vertex has total arc weight at most $\phi(u, \tau)$, so the combined weight does not exceed~$d$.

The second compression step works as follows. On the DAG resulting from the sink-label compression step, define the \emph{source label} $\phi(\sigma, u)$ as the weight of the maximum weight path from the source~$\sigma$ to $u$ in $\mathcal{G}$.
Two vertices $u = (\ell, t, e)$ and $u' = (\ell', t', e')$ are then merged if and only if they share the same stage and the same source label, i.e., $t = t'$ and $\phi(\sigma, u) =  \phi(\sigma, u')$, resulting in a single new vertex $v = (\phi(\sigma, u), \, t, \, \cdot )$.
Feasibility is preserved by an argument analogous to that of the sink-label compression step.
The stage component in the merge key preserves the order of class and item layers after compression.

We deliberately do not include the active flag in the merge keys. Consequently, compression may introduce source-to-sink paths that traverse the setup arc of a class without traversing any of its item arcs. Such paths do not correspond to proper BPPS packing patterns, as they contain an unnecessary class activation. However, when setup weights and setup costs are nonnegative, every such path is dominated by one that bypasses the unused class, thereby consuming no more capacity and incurring no greater cost. We therefore accept these dominated paths in exchange for the greater reduction in DAG size obtained by disregarding the active flag during compression.

Figure~\ref{fig:af_compression} shows the DAG obtained after the compression phase for the example instance of Figure~\ref{fig:1}.

\begin{sidewaysfigure}
\centering
\resizebox{\textheight}{!}{%
{\color{black}
\tikzset{
  afentry/.style={draw, rectangle, rounded corners=1pt,
                  minimum size=5mm, fill=gray!30,
                  font=\tiny\bfseries, inner sep=1pt},
  afano1/.style={draw=green!55!black, circle, minimum size=5mm,
                 fill=white, font=\tiny, inner sep=1pt, thick},
  afayes1/.style={draw=green!55!black, circle, minimum size=5mm,
                  fill=green!25, font=\tiny, inner sep=1pt, thick},
  afano2/.style={draw=blue!65!black, circle, minimum size=5mm,
                 fill=white, font=\tiny, inner sep=1pt, thick},
  afayes2/.style={draw=blue!65!black, circle, minimum size=5mm,
                  fill=blue!20, font=\tiny, inner sep=1pt, thick},
  afdead/.style={dashed, fill=gray!8, draw=gray!45},
  afsource/.style={draw, rectangle, minimum size=5mm, fill=gray!50,
                 font=\tiny\bfseries, inner sep=1pt},
  afsink/.style={draw, rectangle, minimum size=5mm, fill=gray!50,
                 font=\tiny\bfseries, inner sep=1pt},
  afstep3/.style={draw, circle, minimum size=5mm,
                  fill=white, font=\tiny, inner sep=1pt},
  afsetup/.style={->, thick, orange!85!black},
  afitem1/.style={->, green!50!black, thin},
  afitem2/.style={->, blue!65!black, thin},
  afbyp/.style={->, gray!65, very thin},
  afexit/.style={->, gray!45, thin},
  afloss/.style={->, black, dashed, very thin},
  afcbp/.style={->, gray!65, very thin},  % class-bypass lungo
}

\begin{tikzpicture}[
  x=1.28cm, y=0.9cm,
  >=stealth,
]

\node[font=\tiny, gray!70] at (-0.9,   0) {$d=0$};
\node[font=\tiny, gray!70] at (-0.9,   1) {$d=1$};
\node[font=\tiny, gray!70] at (-0.9,   2) {$d=2$};
\node[font=\tiny, gray!70] at (-0.9,   3) {$d=3$};
\node[font=\tiny, gray!70] at (-0.9,   4) {$d=4$};
\node[font=\tiny, gray!70] at (-0.9,   5) {$d=5$};
\node[font=\tiny, gray!70] at (-0.9,   6) {$d=6$};

\fill[gray!20]   (-0.55,-1.5) rectangle (0.55,7.5);   % stage 0
\fill[green!10]  (0.55,-1.5)  rectangle (5.55,7.5);   % stage 1-5
\fill[gray!12]   (5.55,-1.5)  rectangle (6.55,7.5);   % stage 6
\fill[blue!10]   (6.55,-1.5)  rectangle (11.55,7.5);  % stage 7-11
\fill[gray!12]   (11.55,-1.5) rectangle (12.55,7.5);  % stage 12
\fill[gray!20]   (12.55,-1.5) rectangle (13.55,7.5);

\foreach \s/\lbl in {0/0,1/1,2/2,3/3,4/4,5/5,6/6,7/7,8/8,9/9,10/10,11/11,12/12,13/}{
  \node[font=\scriptsize, gray!70] at (\s, -1.8) {$\lbl$};
}

\foreach \s/\lbl in {1.5/1,2.5/2,3.5/3,4.5/4}{
  \node[font=\tiny, green!60!black] at (\s, 7.3) {$i = \lbl$};
}

\foreach \s/\lbl in {7.5/5,8.5/6,9.5/7,10.5/8}{
  \node[font=\tiny, blue!60!black] at (\s, 7.3) {$i = \lbl$};
}

\node[font=\scriptsize, green!60!black] at (3.05,   7.7) {$c=1$};
\node[font=\scriptsize, gray!60] at (6.05,   7.7) {entry/exit};
\node[font=\scriptsize, blue!60!black]  at (9.05,   7.7) {$c=2$};
\node[font=\scriptsize, gray!60] at (12.05,  7.7) {entry/exit};

\node[afsource] (S)     at (0,0)   {$\footnotesize \sigma$};
\node[afentry] (e6_1)  at (6,1)   {$1$};
\node[afentry] (e6_4)  at (6,4)   {$4$};
\node[afentry] (e12_6) at (12,6)  {$6$};

\node[afano1, fill=green!10] (n1_1)  at (1,1) {$1$};
\node[afano1, fill=green!10] (n2_1)  at (2,1) {$1$};
\node[afano1, fill=green!10] (n3_1)  at (3,1) {$1$};
\node[afano1, fill=green!10] (n4_1)  at (4,1) {$1$};

\node[afayes1, fill=green!10] (n2_4) at (2,4) {$4$};
\node[afayes1, fill=green!10] (n3_4) at (3,4) {$4$};
\node[afayes1, fill=green!10] (n4_4) at (4,4) {$4$};
\node[afayes1, fill=green!10] (n5_4) at (5,4) {$4$};

\node[afano2, fill=blue!10] (n7_2)  at (7,2)  {$2$};
\node[afano2, fill=blue!10] (n7_5)  at (7,5)  {$5$};
\node[afano2, fill=blue!10] (n8_5)  at (8,5)  {$5$};
\node[afano2, fill=blue!10] (n9_4)  at (9,4)  {$4$};
\node[afano2, fill=blue!10] (n9_5)  at (9,5)  {$5$};
\node[afano2, fill=blue!10] (n10_5) at (10,5) {$5$};

\node[afayes2, fill=blue!10] (n8_3)  at (8,3)     {$3$};
\node[afayes2, fill=blue!10] (n8_6)  at (8,6)     {$6$};
\node[afayes2, fill=blue!10] (n9_6)  at (9,6)     {$6$};
\node[afayes2, fill=blue!10] (n10_6) at (10,6)    {$6$};
\node[afayes2, fill=blue!10] (n11_6) at (11,6)    {$6$};

\node[afsink]  (T) at (13,6) {$\footnotesize \tau$};

\draw[afsetup] (S) -- node[above, sloped, font=\tiny]{$(2, 1)$} (n1_1);

\draw[afcbp] (S) to[bend right = 13]
  node[below, sloped, font=\tiny]{$(0, 0)$} (e6_1);

\draw[afitem1] (n1_1)  -- node[above, sloped, font=\tiny]{$3$} (n2_4);
\draw[afitem1] (n2_1)  -- node[above, sloped, font=\tiny]{$3$} (n3_4);
\draw[afitem1] (n3_1)  -- node[above, sloped, font=\tiny]{$3$} (n4_4);
\draw[afitem1] (n4_1)  -- node[above, sloped, font=\tiny]{$3$} (n5_4);

\draw[afbyp] (n1_1) -- (n2_1);
\draw[afbyp] (n2_1) -- (n3_1);
\draw[afbyp] (n3_1) -- (n4_1);

\draw[afbyp] (n2_4) -- (n3_4);
\draw[afbyp] (n3_4) -- (n4_4);
\draw[afbyp] (n4_4) -- (n5_4);

\draw[afexit] (n5_4) -- (e6_4);

\draw[afsetup] (e6_1) -- node[above, sloped, font=\tiny]{$(3, 1)$} (n7_2);

\draw[afsetup] (e6_4) -- node[above, sloped, font=\tiny]{$(3,1)$} (n7_5);

\draw[afitem2] (n7_5)  --  node[above, sloped, font=\tiny]{$1$} (n8_6);
\draw[afitem2] (n8_5)  --  node[above, sloped, font=\tiny]{$1$} (n9_6);
\draw[afitem2] (n9_5)  --  node[above, sloped, font=\tiny]{$1$} (n10_6);

\draw[afitem2] (n9_4) to[bend right=12] node[below, sloped, font=\tiny]{$1$} (n10_5);
\draw[afbyp] (n9_4) to[bend left=12] (n10_5);

\draw[afitem2] (n10_5) to[bend right=12] node[below, sloped, font=\tiny]{$1$} (n11_6);
\draw[afbyp] (n10_5) to[bend left=12] (n11_6);

\draw[afitem2] (n8_3) to[bend right=12] node[below, sloped, font=\tiny]{$1$} (n9_4);
\draw[afbyp] (n8_3) to[bend left=12] (n9_4);

\draw[afitem2] (n7_2) to[bend right=12] node[below, sloped, font=\tiny]{$1$} (n8_3);
\draw[afbyp] (n7_2) to[bend left=12] (n8_3);

\draw[afcbp] (e6_4) -- (6,6.7) -- (12,6.7) -- (e12_6);

\draw[afbyp] (n7_5)  -- (n8_5);
\draw[afbyp] (n8_5)  -- (n9_5);
\draw[afbyp] (n8_6)  -- (n9_6);
\draw[afbyp] (n9_5)  -- (n10_5);
\draw[afbyp] (n9_6)  -- (n10_6);
\draw[afbyp] (n10_6) -- (n11_6);

\draw[afexit] (n11_6) -- (e12_6);

\draw[afloss] (e12_6) -- (T);

\end{tikzpicture}}%
}
\caption{
  Compressed DAG $\mathcal{G} = (\mathcal{V}, \mathcal{A})$ for the \BPPS{} instance of Figure~\ref{fig:1}, obtained after applying the sink-label and source-label compression steps detailed in Section~\ref{sec:afCompression} to the DAG of Figure~\ref{fig:af_construction}.
  Vertex placement follows the same convention as in Figure~\ref{fig:af_construction}: the vertical axis reports the load $\ell$ and the horizontal axis reports the stage $t$.
  After compression, the active flag $e$ is not displayed via different vertex colors, as it carries no information in the compressed DAG.
  Vertex and arc colors follow the same convention as in Figure~\ref{fig:af_construction}.
  The effect of compression is visible in the class~$2$ layer ($t \in \{7, 8, \ldots, 11\}$): the $22$ vertices present in the original DAG are reduced to $11$ vertices. In the class~$1$ layer ($t \in \{1, 2, \ldots, 5\}$), the structure is unchanged, as there are no vertices at the same stage which can be merged.}
  \vspace{3.5cm}
\label{fig:af_compression}
\end{sidewaysfigure}

\subsection{The arc-flow ILP formulation}
\label{subsec:arcflow_ilp}

For each arc $a \in \mathcal{A}$ of the compressed DAG, let $\varphi_a \in \mathbb{Z}_{\ge 0}$ be an integer variable which represents the flow traversing arc~$a$. The arc-flow formulation for the \BPPS, which we refer to as~$\afone$, reads:
\begin{subequations}\label{form:arcflow}
\begin{alignat}{3}
  (\afone) \qquad
  \min_{\boldsymbol{\varphi}} \quad \sum_{a \in \mathcal{A}} \lambda_a\, \varphi_a \label{eq:af_obj} \\[1ex]
  \sum_{a \in \delta^-(v)} \varphi_a - \sum_{a \in \delta^+(v)} \varphi_a &\;=\; 0,
    & \qquad v \in \mathcal{V} \setminus \{\sigma,\, \tau\},
    \label{eq:af_flow} \\[1ex]
  \sum_{a \in \itemarcs} \varphi_a &\;=\; 1,
    & \qquad i \in \items,
    \label{eq:af_assignment} \\[1ex]
  \varphi_a &\in \mathbb{Z}_{\geq 0}, & \qquad a \in \mathcal{A}, \label{eq:af_int}
\end{alignat}
\end{subequations}
where $\delta^-(v)$ and $\delta^+(v)$ denote the sets of arcs entering and leaving vertex~$v$, respectively.
The objective function~\eqref{eq:af_obj} coincides with the total cost, which comprises the bin-opening costs and the class setup costs.
The \emph{flow conservation constraints}~\eqref{eq:af_flow} ensure that each unit of flow from $\sigma$ to $\tau$ represents one bin, and the total outflow from~$\sigma$ equals the number of bins used.
The \emph{assignment constraints}~\eqref{eq:af_assignment} require each item $i \in \items$ to be packed exactly once; these constraints are defined over the item arcs $\itemarcs$---which, by construction, are the only arcs encoding the packing of item~$i$.
Since $\mathcal{G}$ is acyclic, any feasible solution $\boldsymbol{\varphi}$ to $\afone$ can be decomposed into $\sum_{a\in\delta^+(\sigma)}\varphi_a$ source-to-sink paths. Each path represents one used bin and determines a feasible packing pattern $S\subseteq\items$ consisting of the items whose item arcs are traversed by the path. By the assignment constraints~\eqref{eq:af_assignment}, these packing patterns are pairwise disjoint and cover~$\items$; hence, they define a feasible partition $\mathcal{S}$ in the sense introduced in Section~\ref{sec:intro}.
Overall, $\afone$ involves $|\mathcal{A}|$ integer variables and $|\mathcal{V}| - 2 + |\items|$ linear constraints.
The size of the DAG is pseudo-polynomial in the input data.
Indeed, before compression, the DAG has one layer for each item and a linear number of additional layers for setup and class-transition decisions.
Within each layer, the cumulative load can take at most $d+1$ values, and the active flag has only a constant number of values.
Moreover, each state generates only a constant number of outgoing arcs.
Thus, the uncompressed DAG has a size of order $O((n+m)d)$, and the compression steps can only reduce this size.

\subsection{Strength of the LP relaxation of $\afone$}
\label{sec:afStrength}

After describing the arc-flow formulation, we now compare the strength of the LP relaxation of the arc-flow formulation with that of the natural formulation introduced in Section~\ref{sec:ILP}.
The next proposition formalizes that this relaxation is at least as strong as the compact one. The proof shows that every feasible solution of $\afoneLP$ has an objective function value at least equal to the optimal objective function value of $\foneLP$ (see \eqref{eq:LPoptval}); taking the minimum over all feasible solutions of $\afoneLP$ then yields the result. Related dominance results have also been established for the \BPP{} in the literature by exploiting the connection between arc-flow formulations and the Dantzig--Wolfe reformulation of the underlying compact model; see, e.g., ~\citet{deCarvalho1999}.
Let $\zeta(\afoneLP)$ denote the optimal objective function value of the LP relaxation of~$\afone$.

\begin{proposition}\label{prop:af_dominates_compact}
The LP relaxation of the arc-flow formulation~$\afone$ is at least as strong as the LP relaxation of the natural compact formulation~$\fone$, i.e.,
\[
  \zeta(\afoneLP) \ge \zeta(\foneLP).
\]
\end{proposition}

{\color{black}

\begin{proof}
Consider an arbitrary feasible solution $\boldsymbol\varphi$ of $\afoneLP$.
Since $\mathcal G$ is acyclic, any feasible flow on $\mathcal G$ can be decomposed into a collection $\mathcal Q$ of source-to-sink paths with nonnegative path-flow values $\xi_P$, $P\in\mathcal Q$. Each path $P$ is identified with the set of arcs it contains, so that $P\subseteq\mathcal A$. Hence,
\[
 \varphi_a=\sum_{P\in\mathcal Q:\,a\in P}\xi_P,
 \qquad
 \sum_{P\in\mathcal Q}\xi_P=\sum_{a\in\delta^-(\tau)}\varphi_a.
\]
For every $P\in\mathcal Q$, let $\items(P)$ and $\classes(P)$ be, respectively, the items and classes represented by its item and setup arcs. For each path $P \in \mathcal Q$, we have that $\sum_{i\in\items(P)}w_i+\sum_{c\in\classes(P)}s_c\le d$ by construction.
Multiplying this inequality by $\xi_P$, summing over $\mathcal Q$, and using the path decomposition arc by arc gives
\begin{equation}\label{proof_step:dominanceAF_red} \sum_{i\in\items}w_i\sum_{a\in\itemarcs}\varphi_a +\sum_{c\in\classes}s_c\sum_{a\in\classarcs}\varphi_a \le d\sum_{a\in\delta^-(\tau)}\varphi_a. \end{equation}

The assignment constraints give $\sum_{a\in\itemarcs}\varphi_a=1$ for every
$i\in\items$. Moreover, for each $c\in\classes$, the assignment of any item $i\in\items_c$ imply
$\sum_{a\in\classarcs}\varphi_a\ge1$. Thus, by~\eqref{proof_step:dominanceAF_red}
and $s_c\ge0$ for all $c\in\classes$, we have
\[
 \sum_{i\in\items}w_i+\sum_{c\in\classes}s_c
 \le d\sum_{a\in\delta^-(\tau)}\varphi_a.
\]
Only loss arcs entering $\tau$ and setup arcs have nonzero costs; therefore, since
$f_c\ge0$ for all $c\in\classes$, we have
\[
\sum_{a\in\mathcal A}\lambda_a\varphi_a = \bincost\sum_{a\in\delta^-(\tau)}\varphi_a + \sum_{c\in\classes}f_c\sum_{a\in\classarcs}\varphi_a \ge \frac{\bincost}{d}\left(\sum_{i\in\items}w_i+\sum_{c\in\classes}s_c\right) + \sum_{c\in\classes}f_c = \zeta(\foneLP),
\]
where the last equality follows from Proposition~\ref{prop:lprelax}. Taking the
minimum over the feasible solutions of $\afoneLP$ proves the result.
\end{proof}
}

The dominance can be strict, as illustrated by the two instances in the example of Figure~\ref{fig:1}. Specifically, in the instance of part~(a), $\zeta(\foneLP)=35$, while $\zeta(\afoneLP)=60$; similarly, in the instance of part~(b), $\zeta(\foneLP)=8$, while $\zeta(\afoneLP)=16$. This is also reflected in the computational results: at the price of a larger number of variables and constraints, the arc-flow formulation provides a consistently stronger LP relaxation than the natural compact formulation across the tested instances; see Section~\ref{sec:comp_LP}.

\subsection{Valid inequalities}
\label{sec:afValidIneq}

The valid inequalities introduced for the natural formulation can be transferred to $\afone$.
Their validity follows from the same arguments used in Sections~\ref{sec:MCI} and~\ref{sec:MBI}, since the corresponding arc-flow expressions count, respectively, the number of activations of each class and the number of used bins.
Specifically, the MCIs~\eqref{MCI} become
\begin{equation} \label{eq:arcflow_mcis}
    \sum_{a \in \classarcs} \varphi_a \;\geq\; \binnumberLBclass_c, \qquad c \in \classes,
\end{equation}
while the MBI translates to a lower bound on the total outflow from the source~$\sigma$:
\begin{equation} \label{eq:arcflow_mbi}
  \sum_{a \in \delta^+(\sigma)} \varphi_a \;\geq\;
  \left\lceil \frac{\sum_{i \in \items} w_i +
  \sum_{c \in \classes} \binnumberLBclass_c\, s_c}{d} \right\rceil.
\end{equation}

We denote by $\aftwo$ the formulation obtained by adding the MCIs~\eqref{eq:arcflow_mcis} to $\afone$, and by $\afthree$ the formulation obtained by adding both the MCIs~\eqref{eq:arcflow_mcis} and the MBI~\eqref{eq:arcflow_mbi} to $\afone$.
Their LP relaxations are denoted by $\aftwoLP$ and $\afthreeLP$, respectively, and their optimal objective function values by $\zeta(\aftwoLP)$ and $\zeta(\afthreeLP)$.
}

\section{Computational experiments} 
\label{sec:computationals}
In this section, we present the results of \Rev{the computational experiments carried out to evaluate} the performance of the ILP formulations for the \BPPS{} problem introduced in Sections~\ref{sec:ILP}~and~\ref{sec:arcflow}, along with the proposed enhancements: the MCIs (see~\eqref{MCI}~and~\eqref{eq:arcflow_mcis}), the MBI (see~\eqref{MBI}~and~\eqref{eq:arcflow_mbi}), and the upper bound on the number of bins that can be used in any optimal \BPPS{} solution, described in Section~\ref{sec:bin_upper_bound}.
We are interested in determining optimal \BPPS{} solutions by solving the ILP models~\eqref{form:kantorovich}~and~\eqref{form:arcflow} by means of a general-purpose ILP solver. When optimality cannot be certified within the imposed time limit, we assess solution quality using the optimality gap.
Our experiments pursue three main goals: {(i) assessing} the computational effectiveness of the proposed ILP formulations and their variants by identifying the largest instance sizes (and their structural features) that can be solved to optimality within a given time limit {and} determining the features that most {affect} problem difficulty{; (ii) evaluating} the relative performance of the different variants of the natural formulation $\fone$ \Rev{and the arc-flow formulation $\afone$ against each other, to identify the families of instances on which each model performs best (see Section~\ref{sec:performance})}; and (iii) assessing the strength of the LP relaxations of the two formulation families and its relation to their computational performance (see Section~\ref{sec:comp_LP}).

To the best of our knowledge, the \BPPS{} is a novel problem that has not been previously studied in the literature. Consequently, no existing models or algorithms are available for comparison with our proposed ILP formulations. Furthermore, no publicly available benchmark library exists. For this reason, we designed a diverse and systematic set of test instances that capture the main structural features of \BPPS{} instances, which we describe next.

\subsection{Library of benchmark BPPS instances}
\label{sec:instances}

In this section, we introduce the library of benchmark instances specifically designed for the \BPPS{}\Rev{, comprising both randomly generated instances and real-world instances from a vehicle-routing application}. This testbed aims to provide a diverse and representative collection of instances, varying in size and structural characteristics, enabling a comprehensive evaluation of the proposed ILP formulations and an assessment of how different instance features influence computational performance.

\Rev{We propose a set of randomly generated instances, obtained by extending} the approach proposed in \citet{SCHWERIN1997377} for the classical \BPP{}. We therefore adopted a methodology consistent with the literature on bin packing problems while incorporating item classes and their associated setup weights and costs. Each \BPPS{} instance is defined by a tuple~$(n, \boldsymbol{w}, d, m, \mathcal{P}, \boldsymbol{s}, \boldsymbol{f}, \bincost)$. While most of these input data can be freely chosen, item weights and class setup weights must be generated to ensure feasibility.
In line with \cite{SCHWERIN1997377}, we introduce parameters $\upsilon_1, \upsilon_2, \eta_1, \eta_2 \in [0,1]$, with $\upsilon_1 < \upsilon_2$ and $\eta_1 < \eta_2$, which determine the sampling ranges for item and setup weights, respectively. Specifically, for each $i \in \items$, the item weight is sampled uniformly from the integer values in the set $[\upsilon_1 \, d,\, \upsilon_2 \, d] \cap \mathbb{Z}$, and for each $c \in \classes$, its setup weight is sampled uniformly in the set $[\eta_1 \, d,\, \eta_2 \, d] \cap \mathbb{Z}$.

To ensure instance feasibility, we enforce the condition $\upsilon_2 + \eta_2 \leq 1$. Each item is assigned to one of the $m$ available classes, with the assignment chosen uniformly at random. Every instance
is generated either with or without setup costs. In the former case, setup costs are sampled uniformly in the set $[1,\, \bincost / 2] \cap \mathbb{Z}$ for each $c \in \classes$, with the bin cost set to $\bincost = 10$. In the latter, setup costs are set to zero, and the bin cost is set to $\bincost = 1$. This latter configuration minimizes the number of bins used, since no setup costs are incurred.
We generated BPPS instances for all combinations of the following instance-generator parameters: 
the number of items $n \in \{25,\, 50,\, 100,\, 200\}$, the number of classes $m \in \{5,\, 10\}$, 
the bin capacity $d \in \{200,\, 1000,\, 10000\}$, the cost structure (with or without setup costs and bin costs), 
\Rev{four} item size categories---\Rev{\textit{small} ($\upsilon_1 = 0.01$, $\upsilon_2 = 0.10$), \textit{medium} ($\upsilon_1 = 0.10$, $\upsilon_2 = 0.20$), \textit{large} ($\upsilon_1 = 0.20$, $\upsilon_2 = 0.30$) and \textit{mixed} ($\upsilon_1 = 0.01$, $\upsilon_2 = 0.30$)}---and \Rev{three} setup weight categories---\textit{small} 
($\eta_1 = 0.01$, $\eta_2 = 0.10$), \textit{large} ($\eta_1 = 0.10$, $\eta_2 = 0.20$) \Rev{and \textit{mixed} ($\eta_1 = 0.01$, $\eta_2 = 0.20$)}.
\Rev{One instance was generated} for each combination of these parameters, resulting in a total of \Rev{576} random instances.

{\color{black}
We also consider a set of 36 real-world instances arising from a vehicle-routing application in the distribution of fresh and frozen grocery products, based on data provided by a major international grocery retailer.
The operational setting involves transferring goods between distribution terminals using two types of vehicles:

\begin{itemize}
    \item[I)] a six-axle non-refrigerated multi-trailer vehicle with dimensions
          $(7.5\,\text{m}+7.5\,\text{m})\times 2.44\,\text{m}\times 3\,\text{m}$, capable of carrying up to $36$ Euro pallets.
    \item[II)] a six-axle refrigerated tractor--semitrailer whose internal loading compartment measures $13.6\,\text{m}\times 2.44\,\text{m}\times
2.6\,\text{m}$ (length $\times$ width $\times$ height) and accommodates up to $32$ Euro pallets in a single floor layer. When both product types are carried simultaneously, a movable bulkhead separates the two temperature
zones. Since the bulkhead can be repositioned between any two consecutive pallet rows, the vehicle can flexibly accommodate any mix of fresh and frozen products, subject to the overall pallet capacity.
\end{itemize}

Pallets cannot be stacked and are arranged in rows of two.  Type~I vehicles carry fresh
products only, while Type~II vehicles may carry both fresh and frozen products simultaneously. Using a Type~II vehicle incurs a higher cost than using a Type~I vehicle.
In this scenario, the daily set of customer orders is expressed in Euro-pallet units and defines the item set.  Each order is classified as either \emph{fresh} or \emph{frozen}, yielding two item classes ($m=2$): items in class $ c=1$ (fresh) can be loaded into either vehicle type.  Because fresh orders impose no refrigeration requirement, they do not consume extra capacity and incur no additional cost when placed in a bin.  Hence, the setup weight and setup cost for class $c=1$ are both zero ($s_1 = 0$, $f_1 = 0$). On the other hand, items from class $c=2$ (frozen) can be loaded only into a Type~II vehicle (a refrigerated bin).  Hence, when at least one frozen order is assigned to a vehicle, the effective pallet capacity decreases by $4$ (from $36$ to $32$, matching the Type~II specification), and an additional cost $\alpha \, \bincost$ is charged (we consider instances with $\alpha \in \{0.1, 0.3\}$. Thus $s_2 = 4$ and $f_2 = \alpha \bincost$. The bin capacity is set to $d = 36$ (the Type~I capacity) and the bin cost to $\bincost = 100$. Figure~\ref{fig:rw_illustration} provides an illustration of the two vehicle types.

\bigskip

\begin{figure}[h]
\centering
\resizebox{\textwidth}{!}{%
{\color{black}
\definecolor{freshcol}{RGB}{240,220,160}
\definecolor{frozencol}{RGB}{70,130,200}
\definecolor{bulkheadcol}{RGB}{200,30,30}
\definecolor{trailerbody}{RGB}{218,218,218}
\definecolor{traileredge}{RGB}{140,140,140}
\definecolor{cabwhite}{RGB}{248,248,248}
\definecolor{cabgrey}{RGB}{185,185,185}
\definecolor{cabdark}{RGB}{100,100,100}
\definecolor{wheelouter}{RGB}{45,45,45}
\definecolor{wheelrim}{RGB}{205,205,205}
\definecolor{wheelinner}{RGB}{150,150,150}
\definecolor{glasscol}{RGB}{195,220,238}
\definecolor{underchassis}{RGB}{95,95,95}

\newcommand{\wheelside}[3]{%
  \pgfmathsetmacro{\WCx}{#1}%
  \pgfmathsetmacro{\WCy}{#2}%
  \pgfmathsetmacro{\WR}{#3}%
  \pgfmathsetmacro{\shx}{\WCx - \WR*0.85}%
  \pgfmathsetmacro{\shy}{\WCy - \WR*0.92}%
  \fill[black!35,opacity=0.30]
    (\shx,\shy) ellipse ({\WR*0.75} and {\WR*0.11});
  \pgfmathsetmacro{\rox}{\WCx - 0.07}%
  \pgfmathsetmacro{\roy}{\WCy - 0.04}%
  \fill[wheelouter] (\rox,\roy) circle({\WR});
  \fill[wheelrim]   (\rox,\roy) circle({\WR*0.67});
  \fill[wheelinner] (\rox,\roy) circle({\WR*0.40});
  \fill[wheelouter] (\WCx,\WCy) circle({\WR});
  \fill[wheelrim]   (\WCx,\WCy) circle({\WR*0.67});
  \fill[wheelinner] (\WCx,\WCy) circle({\WR*0.40});
  \foreach \ang in {0,60,120,180,240,300}{%
    \pgfmathsetmacro{\bx}{\WCx + \WR*0.51*cos(\ang)}%
    \pgfmathsetmacro{\by}{\WCy + \WR*0.51*sin(\ang)}%
    \fill[black!75] (\bx,\by) circle({\WR*0.07});
  }%
  \fill[black!85] (\WCx,\WCy) circle({\WR*0.12});
}

\newcommand{\wheelsingle}[3]{%
  \pgfmathsetmacro{\WCx}{#1}%
  \pgfmathsetmacro{\WCy}{#2}%
  \pgfmathsetmacro{\WR}{#3}%
  \pgfmathsetmacro{\shx}{\WCx - \WR*0.85}%
  \pgfmathsetmacro{\shy}{\WCy - \WR*0.92}%
  \fill[black!35,opacity=0.30]
    (\shx,\shy) ellipse ({\WR*0.75} and {\WR*0.11});
  \fill[wheelouter] (\WCx,\WCy) circle({\WR});
  \fill[wheelrim]   (\WCx,\WCy) circle({\WR*0.67});
  \fill[wheelinner] (\WCx,\WCy) circle({\WR*0.40});
  \foreach \ang in {0,60,120,180,240,300}{%
    \pgfmathsetmacro{\bx}{\WCx + \WR*0.51*cos(\ang)}%
    \pgfmathsetmacro{\by}{\WCy + \WR*0.51*sin(\ang)}%
    \fill[black!75] (\bx,\by) circle({\WR*0.07});
  }%
  \fill[black!85] (\WCx,\WCy) circle({\WR*0.12});
}

\newcommand{\fender}[3]{%
  \pgfmathsetmacro{\WCx}{#1}%
  \pgfmathsetmacro{\WCy}{#2}%
  \pgfmathsetmacro{\WR}{#3}%
  \pgfmathsetmacro{\FR}{\WR*1.26}%
  \draw[traileredge, line width=1.3pt, line cap=round]
    (\WCx,\WCy) ++({196}:{\FR}) arc[start angle=196, end angle=-16, radius={\FR}];
}

\newcommand{\cabTypeII}[2]{%
  \pgfmathsetmacro{\RX}{#1}%
  \pgfmathsetmacro{\BY}{#2}%
  \pgfmathsetmacro{\chL}{\RX-2.60}%
  \fill[underchassis] (\chL,\BY) rectangle (\RX,{\BY+0.26});
  \fill[cabwhite]
    ({\RX-2.62},{\BY+0.26})--
    ({\RX-2.62},{\BY+1.08})--
    ({\RX-2.38},{\BY+1.22})--
    ({\RX-0.10},{\BY+1.22})--
    ({\RX-0.10},{\BY+0.26})--cycle;
  \draw[cabdark,line width=0.55pt]
    ({\RX-2.62},{\BY+0.26})--
    ({\RX-2.62},{\BY+1.08})--
    ({\RX-2.38},{\BY+1.22})--
    ({\RX-0.10},{\BY+1.22})--
    ({\RX-0.10},{\BY+0.26})--cycle;
  \fill[cabgrey]
    ({\RX-2.74},{\BY+0.26}) rectangle ({\RX-2.58},{\BY+0.80});
  \draw[cabdark,line width=0.5pt]
    ({\RX-2.74},{\BY+0.26}) rectangle ({\RX-2.58},{\BY+0.80});
  \fill[yellow!55!white, draw=cabdark, line width=0.35pt]
    ({\RX-2.70},{\BY+0.82}) rectangle ({\RX-2.50},{\BY+0.98});
  \fill[cabwhite]
    ({\RX-1.82},{\BY+1.22})--
    ({\RX-1.82},{\BY+3.16})--
    ({\RX-0.62},{\BY+3.38})--
    ({\RX-0.10},{\BY+3.28})--
    ({\RX-0.10},{\BY+1.22})--cycle;
  \draw[cabdark,line width=0.55pt]
    ({\RX-1.82},{\BY+1.22})--
    ({\RX-1.82},{\BY+3.16})--
    ({\RX-0.62},{\BY+3.38})--
    ({\RX-0.10},{\BY+3.28})--
    ({\RX-0.10},{\BY+1.22})--cycle;
  \fill[cabwhite]
    ({\RX-2.38},{\BY+1.22})--
    ({\RX-1.82},{\BY+1.22})--
    ({\RX-1.82},{\BY+2.25})--
    ({\RX-2.14},{\BY+2.25})--cycle;
  \draw[cabdark,line width=0.45pt]
    ({\RX-2.38},{\BY+1.22})--
    ({\RX-1.82},{\BY+1.22})--
    ({\RX-1.82},{\BY+2.25})--
    ({\RX-2.14},{\BY+2.25})--cycle;
  \fill[glasscol,opacity=0.80]
    ({\RX-1.77},{\BY+1.65}) rectangle ({\RX-1.1},{\BY+2.98});
  \fill[glasscol,opacity=0.80]
  ({\RX-1},{\BY+2.15}) rectangle ({\RX-0.18},{\BY+2.98});
  \draw[cabdark,line width=0.35pt]
    ({\RX-1.77},{\BY+1.65}) rectangle ({\RX-0.18},{\BY+2.98});
  \draw[cabdark,line width=0.35pt]
    ({\RX-1},{\BY+2.15}) rectangle ({\RX-0.18},{\BY+2.15});
  \draw[cabdark,line width=0.5pt]
    ({\RX-1.02},{\BY+1.65})--({\RX-1.02},{\BY+2.98});
  \draw[cabdark,line width=0.4pt]
    ({\RX-1.07},{\BY+1.22})--({\RX-1.07},{\BY+3.30});
  \fill[cabdark]
    ({\RX-0.55},{\BY+1.85}) rectangle ({\RX-0.34},{\BY+1.92});
  \fill[cabgrey]
    ({\RX-1.82},{\BY+3.16})--
    ({\RX-0.90},{\BY+3.38})--
    ({\RX-0.90},{\BY+3.62})--
    ({\RX-1.82},{\BY+3.48})--cycle;
  \draw[cabdark,line width=0.4pt]
    ({\RX-1.82},{\BY+3.16})--
    ({\RX-0.90},{\BY+3.38})--
    ({\RX-0.90},{\BY+3.62})--
    ({\RX-1.82},{\BY+3.48})--cycle;
  \fill[cabgrey,draw=cabdark,line width=0.3pt]
    ({\RX-0.23},{\BY+1.22}) rectangle ({\RX-0.12},{\BY+3.65});
}
\newcommand{\cabTypeI}[2]{%
  \pgfmathsetmacro{\RX}{#1}%
  \pgfmathsetmacro{\BY}{#2}%
  \fill[underchassis] ({\RX-2.45},\BY) rectangle (\RX,{\BY+0.24});
  \fill[cabwhite]
    ({\RX-2.47},{\BY+0.24})--
    ({\RX-2.47},{\BY+1.00})--
    ({\RX-2.24},{\BY+1.14})--
    ({\RX-0.10},{\BY+1.14})--
    ({\RX-0.10},{\BY+0.24})--cycle;
  \draw[cabdark,line width=0.55pt]
    ({\RX-2.47},{\BY+0.24})--
    ({\RX-2.47},{\BY+1.00})--
    ({\RX-2.24},{\BY+1.14})--
    ({\RX-0.10},{\BY+1.14})--
    ({\RX-0.10},{\BY+0.24})--cycle;
  \fill[cabgrey]
    ({\RX-2.58},{\BY+0.24}) rectangle ({\RX-2.43},{\BY+0.72});
  \draw[cabdark,line width=0.5pt]
    ({\RX-2.58},{\BY+0.24}) rectangle ({\RX-2.43},{\BY+0.72});
  \foreach \gx in {0.06,0.12,0.18,0.24}{
    \pgfmathsetmacro{\gxx}{\RX-2.57+\gx}%
    \draw[cabdark,line width=0.25pt]
      (\gxx,{\BY+0.27})--(\gxx,{\BY+0.68});
  }%
  \fill[yellow!55!white,draw=cabdark,line width=0.35pt]
    ({\RX-2.55},{\BY+0.75}) rectangle ({\RX-2.36},{\BY+0.90});
  \foreach \vx in {0.35,0.65,0.95,1.25}{
    \pgfmathsetmacro{\vxx}{\RX-2.22+\vx}%
    \draw[cabdark,line width=0.25pt,opacity=0.45]
      (\vxx,{\BY+0.64})--(\vxx,{\BY+1.10});
  }%
  \fill[cabwhite]
    ({\RX-1.68},{\BY+1.14})--
    ({\RX-1.68},{\BY+2.96})--
    ({\RX-0.58},{\BY+3.16})--
    ({\RX-0.10},{\BY+3.06})--
    ({\RX-0.10},{\BY+1.14})--cycle;
  \draw[cabdark,line width=0.55pt]
    ({\RX-1.68},{\BY+1.14})--
    ({\RX-1.68},{\BY+2.96})--
    ({\RX-0.58},{\BY+3.16})--
    ({\RX-0.10},{\BY+3.06})--
    ({\RX-0.10},{\BY+1.14})--cycle;
  \fill[cabwhite]
    ({\RX-2.24},{\BY+1.14})--
    ({\RX-1.68},{\BY+1.14})--
    ({\RX-1.68},{\BY+2.10})--
    ({\RX-2.00},{\BY+2.10})--cycle;
  \draw[cabdark,line width=0.45pt]
    ({\RX-2.24},{\BY+1.14})--
    ({\RX-1.68},{\BY+1.14})--
    ({\RX-1.68},{\BY+2.10})--
    ({\RX-2.00},{\BY+2.10})--cycle;
  \fill[glasscol,opacity=0.80]
    ({\RX-1.98},{\BY+1.19})--
    ({\RX-1.70},{\BY+1.19})--
    ({\RX-1.70},{\BY+2.05})--
    ({\RX-1.96},{\BY+2.05})--cycle;
  \draw[cabdark,line width=0.35pt]
    ({\RX-1.98},{\BY+1.19})--
    ({\RX-1.70},{\BY+1.19})--
    ({\RX-1.70},{\BY+2.05})--
    ({\RX-1.96},{\BY+2.05})--cycle;
  \fill[glasscol,opacity=0.80]
    ({\RX-1.63},{\BY+1.55}) rectangle ({\RX-0.18},{\BY+2.82});
  \draw[cabdark,line width=0.35pt]
    ({\RX-1.63},{\BY+1.55}) rectangle ({\RX-0.18},{\BY+2.82});
  \draw[cabdark,line width=0.5pt]
    ({\RX-0.95},{\BY+1.55})--({\RX-0.95},{\BY+2.82});
  \fill[cabdark]
    ({\RX-0.50},{\BY+1.75}) rectangle ({\RX-0.30},{\BY+1.82});
  \draw[cabdark,line width=0.4pt]
    ({\RX-1.00},{\BY+1.14})--({\RX-1.00},{\BY+3.08});
  \fill[cabgrey]
    ({\RX-1.68},{\BY+2.96})--
    ({\RX-0.82},{\BY+3.16})--
    ({\RX-0.82},{\BY+3.38})--
    ({\RX-1.68},{\BY+3.22})--cycle;
  \draw[cabdark,line width=0.4pt]
    ({\RX-1.68},{\BY+2.96})--
    ({\RX-0.82},{\BY+3.16})--
    ({\RX-0.82},{\BY+3.38})--
    ({\RX-1.68},{\BY+3.22})--cycle;
  \fill[cabgrey,draw=cabdark,line width=0.3pt]
    ({\RX-1.42},{\BY+0.24}) rectangle ({\RX-0.30},{\BY+0.44});
  \fill[cabgrey,draw=cabdark,line width=0.3pt]
    ({\RX-1.42},{\BY+0.44}) rectangle ({\RX-0.30},{\BY+0.62});
  \fill[cabgrey,draw=cabdark,line width=0.3pt]
    ({\RX-0.22},{\BY+1.14}) rectangle ({\RX-0.11},{\BY+3.40});
  \fill[cabgrey,draw=cabdark,line width=0.4pt,rounded corners=1.5pt]
    ({\RX-1.35},{\BY+0.50}) rectangle ({\RX-0.33},{\BY+1.02});
}

\newcommand{\sectionblock}[3]{%
  \pgfmathtruncatemacro{\Nlast}{#2-1}%
  \foreach \i in {0,...,\Nlast}{%
    \pgfmathsetmacro{\xp}{#1 + \i*0.80}%
    \fill[#3,draw=white,line width=0.5pt] (\xp,0.13) rectangle ++(0.72,0.68);
    \fill[#3,draw=white,line width=0.5pt] (\xp,0.87) rectangle ++(0.72,0.68);
  }%
}

\newcommand{\tractortop}[1]{%
  \pgfmathsetmacro{\fx}{#1}%
  \fill[cabdark] ({\fx-0.18},0.66) rectangle (\fx,1.06);            % fifth-wheel neck
  \fill[cabgrey,draw=cabdark,line width=0.5pt,rounded corners=2pt]
        ({\fx-2.05},0.16) rectangle ({\fx-0.18},1.56);             % body
  \fill[glasscol,opacity=0.85] ({\fx-2.00},0.24) rectangle ({\fx-1.80},1.48);
  \draw[cabdark,line width=0.3pt] ({\fx-2.00},0.24) rectangle ({\fx-1.80},1.48);
  \draw[cabdark,line width=0.3pt] ({\fx-1.55},0.16)--({\fx-1.55},1.56);
  \draw[cabdark,line width=0.3pt] ({\fx-0.70},0.16)--({\fx-0.70},1.56);
  \fill[black!55] ({\fx-0.35},0.86) circle(0.10);                  % king-pin
}

\begin{tikzpicture}[x=1cm,y=1cm,font=\small]

\begin{scope}[]
  \fill[trailerbody] (2.65,0.00) rectangle (10.03,1.72);
  \draw[cabdark,line width=0.8pt] (2.65,0.00) rectangle (10.03,1.72);
  \fill[traileredge] (2.65,0.00) rectangle (10.03,0.10);
  \fill[traileredge] (2.65,1.62) rectangle (10.03,1.72);
  \fill[trailerbody] (10.45,0.00) rectangle (17.83,1.72);
  \draw[cabdark,line width=0.8pt] (10.45,0.00) rectangle (17.83,1.72);
  \fill[traileredge] (10.45,0.00) rectangle (17.83,0.10);
  \fill[traileredge] (10.45,1.62) rectangle (17.83,1.72);
  \fill[cabdark] (10.03,0.58) rectangle (10.45,1.14);          % drawbar
  \sectionblock{2.76}{9}{freshcol}
  \sectionblock{10.56}{9}{freshcol}
  \draw[<->,cabdark,thin] (2.76,-0.45)--(9.96,-0.45)
     node[midway,below,font=\scriptsize]{9 sections -- 18 pallets (fresh)};
  \draw[<->,cabdark,thin] (10.56,-0.45)--(17.76,-0.45)
     node[midway,below,font=\scriptsize]{9 sections -- 18 pallets (fresh)};
  \node[right,font=\footnotesize,align=left] at (18.05,0.86)
     {\textbf{Type I} \textit{(top view)}};
\end{scope}

\begin{scope}[yshift=-5cm]
  \fill[trailerbody] (2.80,0.00) rectangle (15.75,1.72);
  \draw[cabdark,line width=0.8pt] (2.80,0.00) rectangle (15.75,1.72);
  \fill[traileredge] (2.80,0.00) rectangle (15.75,0.10);
  \fill[traileredge] (2.80,1.62) rectangle (15.75,1.72);
  \sectionblock{2.9}{10}{frozencol}
  \sectionblock{10.94}{6}{freshcol}
  \fill[bulkheadcol] (10.84,0.06) rectangle (10.90,1.66);
  \draw[->,cabdark,thin] (10.87,2.3)--(10.87,1.75);
  \node[above,font=\scriptsize] at (10.87,2.35) {bulkhead};
  \draw[<->,cabdark,thin] (2.91,-0.45)--(10.83,-0.45)
     node[midway,below,font=\scriptsize]{10 sections -- 20 pallets (frozen)};
  \draw[<->,cabdark,thin] (10.91,-0.45)--(15.63,-0.45)
     node[midway,below,font=\scriptsize]{6 sections -- 12 pallets (fresh)};
  \node[right,font=\footnotesize,align=left] at (16.45,0.86)
     {\textbf{Type II} \textit{(top view)}};
\end{scope}

\end{tikzpicture}
}%
}
\smallskip
\caption{
    Schematic representation of the two vehicle types used in the real-world \BPPS{} instances.
}
\label{fig:rw_illustration}
\end{figure}

The real-world instances are organized into three groups of $12$ instances: \texttt{rwA}, \texttt{rwB}, \texttt{rwC}. The \texttt{rwA} and \texttt{rwB} instances share the same item-class partition (i.e., the same number of fresh and frozen orders for every instance). They differ in the minimum pallet demand per customer order: specifically, every fresh and frozen order in \texttt{rwA} contains at least $12$ pallets, whereas in \texttt{rwB} the minimum is $14$ pallets for fresh orders and $10$ pallets for frozen orders. In \texttt{rwC}, some pallet requirements are fractional. To express all BPPS input data as integers, pallet quantities are multiplied by $10$. Thus, $d=360$ represents a bin capacity of $36$ pallets, $s_2=40$ represents a setup weight of $4$ pallets, and the minimum item weight of $112$ represents an order of $11.2$ pallets.
}

The complete benchmark set and its documentation, along with the random instance generator toolkit, are publicly available at the following GitHub repository: \url{https://github.com/FabioCiccarelli/BPPS}. \Rev{The repository also contains the implementation of the mathematical formulations introduced in this paper, as well as the algorithms for constructing and compressing the arc-flow DAG described in Section~\ref{sec:arcflow}.} We hope that the resources made available through this repository will foster further research on the \BPPS{} by providing a common reference framework for developing and comparing novel exact and heuristic algorithms, mathematical formulations, computational studies, and polyhedral analyses.

\subsection{Computational performance of the ILP formulations}
\label{sec:performance}

In this section, we compare the computational performance of the following \Rev{seven} variants of the proposed ILP formulations~\eqref{form:kantorovich}~and~\eqref{form:arcflow} for the BPPS. \Rev{The base variant of the natural formulation}, denoted by $\fone$, is formulation~\eqref{form:kantorovich} in which the upper bound $\binnumberUB$ on the number of bins in any optimal BPPS solution is set to $n$, namely the number of items. The first variant \Rev{of the natural formulation}, denoted by $\ftwo$, extends $\fone$ by including the MCIs~\eqref{MCI}. The second variant, denoted by $\ftwobis$, extends $\fone$ by including the MCIs~\eqref{MCI} and the MBI~\eqref{MBI}. The third variant \Rev{of the natural formulation}, denoted by $\fthree$, extends $\ftwobis$ by including the upper bound $\hat{\binnumberUB}$ on the number of bins in any optimal BPPS solution (see Proposition~\ref{upper_bound_bins}); the values $\overline{\beta}_c$, for each class $c \in \classes$, used to compute $\hat{\binnumberUB}$ were obtained \Rev{using} classical heuristic algorithms for the BPP\Rev{, specifically} the Next Fit (NF), First Fit (FF), and Best Fit (BF) algorithms (see \cite{Johnson1974WorstCasePB}), each executed under 50 random permutations of the item order (including the non-increasing order of weights). \Rev{The base variant of the arc-flow formulation is denoted by $\afone$ and corresponds to Model~\eqref{form:arcflow}. The first variant of the arc-flow formulation, denoted by $\aftwo$, extends $\afone$ by including the MCIs~\eqref{eq:arcflow_mcis}. The second variant of the arc-flow formulation, denoted by $\afthree$, extends $\afone$ by including the MCIs~\eqref{eq:arcflow_mcis} and the MBI~\eqref{eq:arcflow_mbi}.
In all seven variants, the solver is provided with a warm-start feasible solution constructed as follows. For each class $c \in \classes$, the items of class $c$ are packed into dedicated bins using the heuristic solution computed to derive $\overline{\beta}_c$. The resulting solution, which features single-class bins only, is passed to the solver as an initial solution.}

\Rev{Concerning the arc-flow formulation, it is interesting to assess the effectiveness of the compression phase described in Section~\ref{sec:afCompression} in reducing the size of the arc-flow DAGs. Across the 576 random instances, it removes a median of $71.0\%$ of the vertices and $67.5\%$ of the arcs. Although the extent of the reduction varies considerably across instances, it remains substantial in all cases, ranging from $43.6\%$ to $97.8\%$ for vertices and from $42.1\%$ to $96.9\%$ for arcs.}

All tests were conducted on an Ubuntu machine equipped with an Intel(R) Xeon(R) Gold 5218 CPU @ 2.30GHz and 256~GB of RAM. The ILP formulations are 
solved using \Rev{\textit{Gurobi 13.0.2}}, running in single-thread mode with default settings. A time limit of 1800 seconds is imposed for each test.

Table~\ref{tab:exact_1} reports, for each of the \Rev{seven} ILP formulation variants presented above, the number of instances solved to proven optimality within a time limit of 1800 seconds (columns ``opt'') and the average optimality gap (columns ``gap'') over the instances not solved to optimality. The optimality gap is defined as the percentage difference between the best upper bound and the best lower bound computed by the solver during the solution process for a given formulation, taken with respect to the best upper bound.
\Rev{For the three arc-flow variants, the table additionally reports, for each instance group, the number of instances for which no valid dual bound could be computed within the time limit (column ``n.c.''); these instances are excluded from the average gap computation for the corresponding variant.} The random instances are grouped according to the six instance-generator parameters described in the previous section on the benchmark instance library, \Rev{while the real-world instances are grouped according to the subset they belong to (\texttt{rwA}, \texttt{rwB} or \texttt{rwC}).} For each group, the table
also reports the number of instances in that group (column ``inst'').

\Rev{The results in Table~\ref{tab:exact_1} reveal distinct patterns depending on the instance characteristics, which we discuss separately for the natural and arc-flow formulation families before comparing them directly.}
\Rev{Within the natural formulation family,} performance differences become more pronounced as the instance size or complexity increases. For small instances ($n = 25$), all four variants perform similarly. However, as $n$ grows, $\fone$ rapidly loses
effectiveness, with a drop in the number of optimally solved instances and a substantial increase in the gap. The addition of the MCI constraints in $\ftwo$ yields a clear improvement; the further inclusion of the MBI in $\ftwobis$ allows a larger number of instances to be solved to optimality. The combination of these valid inequalities
with the tighter upper bound on the number of bins in $\fthree$ provides the best results among the natural formulation variants for most of the instance groups, both in terms of optimal solutions and reduced gap. A similar trend holds when varying the number of classes $m$, the bin
capacity $d$, the setup cost structure, and the setup and item weight distributions, with $\fthree$ matching or outperforming $\ftwobis$ across most of the groups.
\Rev{Within the arc-flow family, $\afthree$ is consistently the strongest variant. A notable limitation, however, is the significant number of instances for which no valid dual bound could be computed, particularly for larger values of $n$ (up to 66 instances out of 144 for $n = 200$) and larger bin capacities (up to 71 out of 192 for $d = 10,000$). \\
A further comparison can be done between the two formulations introduced in this article, namely the natural model and the arc-flow model. The item weight distribution emerges as the dominant factor: for instances with large item weights, the arc-flow variants are substantially more effective, with $\afthree$ solving 132 out of 144 instances compared to 34 for $\fthree$, and with an average optimality gap of $1.3\%$ against $6.7\%$. Conversely, for small and medium item weights, the natural formulation variants are definitely superior, and the arc-flow formulation suffers from a higher rate of instances marked as "n.c.". This difference explains the behavior on the real-world instances, which feature large item weights: the arc-flow variant $\afthree$ solves all 36 instances to optimality, while the best natural formulation variant solves only 8. } 

\Rev{Overall, across the 576 random instances, 503 ($87.3\%$) are solved to proven optimality by at least one of the seven configurations, and all 36 real-world instances are solved as well. The remaining 73 random instances ($12.7\%$) are not solved by any configuration within the time limit. The minimum optimality gap on these instances ranges from $0.27\%$ to $6.67\%$ (average $2.85\%$, median $2.78\%$), so the solver remains close to optimality even on these harder instances. Difficulty is influenced almost entirely by the values of $n$ and $m$: no instance with $n \le 50$ remains unsolved, while the unsolved rate reaches $44.4\%$ for $n=200,\,m=10$ and $36.1\%$ for $n=100,\,m=10$ and bin-setup costs, reflecting the higher complexity induced by many classes with many items each. Two secondary factors further contribute to instance difficulty: adding the bin-setup costs raises the unsolved rate from $9.7\%$ to $15.6\%$, and instances with medium or mixed item-weight ranges are harder ($20.8\%$ and $20.1\%$ unsolved, respectively) than small or large ones ($2.1\%$ and $7.6\%$). No single formulation consistently provides better bounds on the open instances: $\ftwo$ yields the best optimality gap in 23 of the 73 cases and $\afthree$ in 18, confirming the complementary behavior of the natural and arc-flow models discussed above.}

\begin{table}[h]
\scriptsize
\centering
\tabcolsep 2pt
\renewcommand
\arraystretch{1.35}

\caption{Performance comparison of the seven ILP formulation variants for the BPPS, reporting the number of instances solved to optimality (opt), the average optimality gap over unsolved instances (gap) \Rev{and, for the arc-flow variants, the number of instances for which no valid dual bound could be computed within the time limit (n.c.).} The instances are grouped by instance generator parameters. Time limit: 1800 seconds.}

\label{tab:exact_1}

{\color{black}
\begin{tabular}{llrrrrrrrrrrrrrrrrrrrrrrrrrrr}
\toprule
                                   &             &      &  &  & \multicolumn{11}{r}{Natural}                                                                                                        &  &  & \multicolumn{11}{r}{Arc-flow}                                                                      \\ \cline{6-16} \cline{19-21} \cline{23-25} \cline{27-29}
                                   &             &      &  &  & \multicolumn{2}{r}{$\fone$} &  & \multicolumn{2}{r}{$\ftwo$} &  & \multicolumn{2}{r}{$\ftwobis$} &  & \multicolumn{2}{r}{$\fthree$} &  &  & \multicolumn{3}{r}{$\afone$} &  & \multicolumn{3}{r}{$\aftwo$} &  & \multicolumn{3}{r}{$\afthree$} \\ \cline{6-7} \cline{9-10} \cline{12-13} \cline{15-16} \cline{19-21} \cline{23-25} \cline{27-29}
                              &             & inst &  &  & opt          & gap          &  & opt          & gap          &  & opt                & gap       &  & opt               & gap       &  &  & opt           & gap   & n.c. &  & opt     & gap     & n.c.     &  & opt            & gap   & n.c.  \\ \cline{1-3} \cline{6-7} \cline{9-10} \cline{12-13} \cline{15-16} \cline{19-21} \cline{23-25} \cline{27-29}
                                   &             &      &  &  &              &              &  &              &              &  &                    &           &  &                   &           &  &  &               &       &      &  &         &         &          &  &                &       &       \\[-3ex]
\tt item number & $n$ = 25 & 144 &  &  & 137 & 10.7 &  & 136 & 10.4 &  & 137 & 11.1 &  & 137 & 11.1 &  &  & \textbf{144} & - & 0 &  & 143 & 3.9 & 0 &  & \textbf{144} & - & 0 \\
 & $n$ = 50 & 144 &  &  & 83 & 8.3 &  & 99 & 7.2 &  & 107 & 6.8 &  & 107 & 6.7 &  &  & 108 & 4.4 & 9 &  & 119 & 4.5 & 9 &  & \textbf{128} & 7.1 & 9 \\
 & $n$ = 100 & 144 &  &  & 43 & 11.2 &  & 65 & 4.8 &  & 77 & 4.9 &  & \textbf{79} & 5.0 &  &  & 52 & 6.3 & 24 &  & 57 & 3.9 & 36 &  & 69 & 3.7 & 36 \\
 & $n$ = 200 & 144 &  &  & 18 & 16.2 &  & 54 & 4.1 &  & \textbf{64} & 4.4 &  & 63 & 4.5 &  &  & 27 & 5.5 & 45 &  & 25 & 4.6 & 66 &  & 38 & 4.8 & 64 \\[0.75ex]
\tt class number & $m$ = 5 & 288 &  &  & 150 & 11.8 &  & 200 & 5.2 &  & \textbf{214} & 5.3 &  & \textbf{214} & 5.5 &  &  & 161 & 5.4 & 40 &  & 170 & 4.0 & 57 &  & 200 & 4.2 & 56 \\
 & $m$ = 10 & 288 &  &  & 131 & 13.5 &  & 154 & 5.2 &  & 171 & 5.3 &  & 172 & 5.2 &  &  & 170 & 6.0 & 38 &  & 174 & 4.5 & 54 &  & \textbf{179} & 4.6 & 53 \\[0.75ex]
\tt bin capacity & $d$ = 200 & 192 &  &  & 93 & 13.1 &  & 125 & 5.0 &  & 135 & 4.9 &  & 134 & 5.1 &  &  & 128 & 6.1 & 0 &  & 131 & 4.9 & 0 &  & \textbf{148} & 5.0 & 0 \\
 & $d$ = 1,000 & 192 &  &  & 95 & 12.9 &  & 120 & 5.3 &  & 127 & 5.4 &  & \textbf{129} & 5.4 &  &  & 106 & 5.9 & 18 &  & 111 & 4.0 & 40 &  & 122 & 4.3 & 38 \\
 & $d$ = 10,000 & 192 &  &  & 93 & 12.1 &  & 109 & 5.4 &  & \textbf{123} & 5.5 &  & \textbf{123} & 5.5 &  &  & 97 & 4.4 & 60 &  & 102 & 2.8 & 71 &  & 109 & 2.7 & 71 \\[0.75ex]
\tt bin-setup costs & no & 288 &  &  & 150 & 8.3 &  & 184 & 6.2 &  & 190 & 6.1 &  & 189 & 6.1 &  &  & \textbf{208} & 6.1 & 42 &  & 207 & 5.8 & 56 &  & 200 & 6.3 & 55 \\
 & yes & 288 &  &  & 131 & 16.6 &  & 170 & 4.3 &  & 195 & 4.4 &  & \textbf{197} & 4.4 &  &  & 123 & 5.5 & 36 &  & 137 & 3.9 & 55 &  & 179 & 3.4 & 54 \\[0.75ex]
\tt item weights & small & 144 &  &  & 124 & 11.2 &  & 139 & 6.5 &  & 140 & 6.0 &  & \textbf{141} & 5.2 &  &  & 70 & 11.7 & 50 &  & 68 & 8.8 & 55 &  & 72 & 10.3 & 55 \\
 & medium & 144 &  &  & 63 & 11.4 &  & 90 & 3.5 &  & 102 & 3.4 &  & \textbf{105} & 3.4 &  &  & 78 & 5.6 & 8 &  & 82 & 4.1 & 22 &  & 93 & 3.9 & 21 \\
 & large & 144 &  &  & 30 & 14.3 &  & 30 & 6.8 &  & 35 & 6.5 &  & 34 & 6.7 &  &  & 110 & 1.4 & 0 &  & 118 & 1.5 & 0 &  & \textbf{132} & 1.3 & 0 \\
 & mixed & 144 &  &  & 64 & 12.1 &  & 95 & 3.4 &  & \textbf{108} & 3.6 &  & 106 & 3.3 &  &  & 73 & 5.7 & 20 &  & 76 & 3.9 & 34 &  & 82 & 2.9 & 33 \\[0.75ex]
\tt setup weights & small & 192 &  &  & 98 & 11.4 &  & 123 & 5.3 &  & \textbf{129} & 5.4 &  & \textbf{129} & 5.6 &  &  & 104 & 5.6 & 32 &  & 109 & 4.9 & 44 &  & 118 & 4.3 & 43 \\
 & large & 192 &  &  & 87 & 14.0 &  & 114 & 5.2 &  & 124 & 5.1 &  & 127 & 5.1 &  &  & 121 & 5.7 & 17 &  & 123 & 3.8 & 29 &  & \textbf{134} & 4.5 & 31 \\
 & mixed & 192 &  &  & 96 & 12.6 &  & 117 & 5.2 &  & \textbf{132} & 5.4 &  & 130 & 5.3 &  &  & 106 & 5.7 & 29 &  & 112 & 4.1 & 38 &  & 127 & 4.6 & 35 \\[0.75ex] \cline{1-3} \cline{6-7} \cline{9-10} \cline{12-13} \cline{15-16} \cline{19-21} \cline{23-25} \cline{27-29} 
Total/Average &  & 576 &  &  & 281 & 12.7 &  & 354 & 5.2 &  & 385 & 5.3 &  & \textbf{386} & 5.3 &  &  & 331 & 5.7 & 78 &  & 344 & 4.3 & 111 &  & 379 & 4.5 & 109 \\ 
\hline
\\[-3ex]
\tt real-world & \texttt{rwA} & 12 &  &  & 2 & 10.2 &  & 2 & 5.7 &  & 1 & 7.5 &  & 4 & 6.5 &  &  & 6 & 0.3 & 0 &  & 11 & 0.4 & 0 &  & \textbf{12} & - & 0 \\
 & \texttt{rwB} & 12 &  &  & 2 & 7.1 &  & 1 & 9.4 &  & 1 & 5.1 &  & 4 & 1.1 &  &  & \textbf{12} & - & 0 &  & \textbf{12} & - & 0 &  & \textbf{12} & - & 0 \\
 & \texttt{rwC} & 12 &  &  & 0 & 7.0 &  & 1 & 7.5 &  & 1 & 7.1 &  & 0 & 7.7 &  &  & 11 & 0.2 & 0 &  & \textbf{12} & - & 0 &  & \textbf{12} & - & 0 \\[0.75ex]
 \cline{1-3} \cline{6-7} \cline{9-10} \cline{12-13} \cline{15-16} \cline{19-21} \cline{23-25} \cline{27-29} 
Total/Average &  & 36 &  &  & 4 & 8.0 &  & 4 & 7.6 &  & 3 & 6.6 &  & 8 & 5.5 &  &  & 29 & 0.3 & 0 &  & 35 & 0.4 & 0 &  & \textbf{36} & - & 0 \\
\bottomrule
\end{tabular}
}
\end{table}

Figure~\ref{fig:survival_plots} plots the number of random instances solved as a function of time for the \Rev{seven} ILP formulation variants. The $x$-axis represents the computing time (in seconds, on a logarithmic scale), and the $y$-axis reports the cumulative number of instances solved to optimality within that time.
\Rev{Among the natural formulation variants, the ranking is consistent throughout almost the entire time horizon: $\fthree$ dominates, followed by $\ftwobis$, $\ftwo$, and $\fone$. The gap between $\fthree$ and $\fone$ is substantial and emerges within the first few seconds, showing that the proposed strengthening techniques are also effective on the easier instances. Within the arc-flow family, $\afthree$ similarly dominates $\aftwo$ and $\afone$ throughout the time horizon, although the separation among the three variants is less pronounced than within the natural family. Comparing the two formulation families on the random instances, $\fthree$ and $\ftwobis$ achieve the best performance in the early phase, below approximately one second, solving more instances than any arc-flow variant. Specifically, within the first $0.1$ seconds, $\fthree$ solves around $150$ instances, compared with approximately $50$ for $\afthree$. As time increases, the arc-flow variants narrow the gap, and $\afthree$ ultimately solves almost as many instances as $\fthree$ by the end of the time horizon. These profiles therefore show that, on the random benchmark, the strongest natural formulation variants are more effective on instances solved within short computing times, whereas the performance of $\fthree$ and $\afthree$ becomes comparable over longer time horizons. This conclusion does not extend to all instance groups: as shown in Table~\ref{tab:exact_1}, the relative performance of the two formulation families depends mainly on the item weight distribution, with the arc-flow variants outperforming the natural ones on instances with large item weights and on the real-world testbed.}

\begin{figure}[h]
\centering
    \includegraphics[width=0.9\linewidth]{./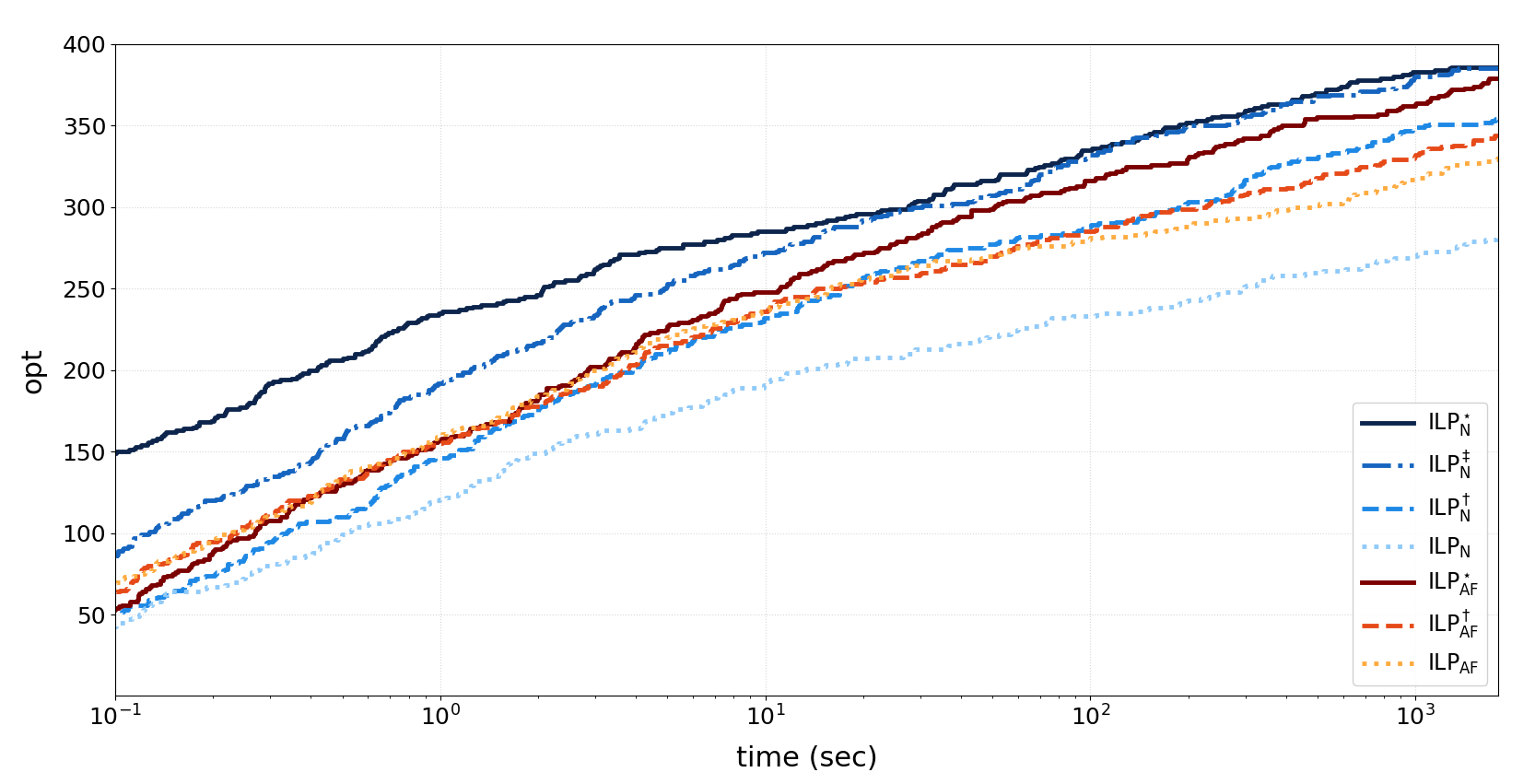}   
\caption{Number of random instances solved as a function of time for the different variants of the ILP formulations for the BPPS. Time limit: 1800 seconds. The horizontal axis is on a logarithmic scale.  
}
    \label{fig:survival_plots}
\end{figure}

\subsection{Strength of the LP relaxations and features of the optimal solutions}
\label{sec:comp_LP}

\Rev{We conclude the computational analysis with an assessment of the LP relaxation strength of the seven variants, as tighter LP bounds generally correlate with faster branch-and-bound convergence and higher rates of optimally solved instances. Moreover, we briefly discuss the main structural features of the optimal \BPPS{} solutions. A detailed breakdown is reported in Section~E.3 of the electronic companion. 
The analysis is conducted on 427 randomly selected instances for which an optimal solution is known, and an optimal LP solution can be computed for all arc-flow formulation variants within a 30-minute time limit. The average integrality gap, defined as the percentage difference between the LP relaxation value and the known optimal integer objective, taken with respect to the optimum, confirms the hierarchy observed in the computational experiments. Among the natural formulation variants, $\foneLP$ exhibits the largest average gap ($17.8\%$), which is substantially reduced by the addition of the MCIs in $\ftwoLP$ ($8.3\%$) and by the inclusion of the MBI in $\fthreeLP$ ($3.0\%$). The arc-flow variants have tighter LP relaxations consistently, confirming the theoretical dominance stated in Proposition~\ref{prop:af_dominates_compact}: $\afoneLP$ already achieves an average gap of $6.6\%$, comparable to $\ftwoLP$, while $\afthreeLP$ reaches $1.6\%$, the lowest among all variants. The contrast between the two families is most striking on the $36$ real-world instances, where the natural formulation variants exhibit gaps in the range $9$--$23\%$ even for $\fthreeLP$, whereas all three arc-flow variants achieve an average integrality gap below $1\%$; this is consistent with the dominance of the arc-flow formulation on the real-world instances, observed in Table~\ref{tab:exact_1}.
Nevertheless, a stronger LP relaxation does not automatically translate into superior computational performance. As shown in Table~\ref{tab:exact_1}, the arc-flow variants are outperformed by the natural formulation on several instance groups.
This apparent discrepancy is explained by the size of the arc-flow DAG, which can grow very large depending on the instance structure, making the LP at the root node itself computationally expensive to solve and, in the most extreme cases, preventing the computation of any valid dual bound within the time limit.
The LP strength advantage of the arc-flow formulation is therefore offset, in such instances, by the excessive computational overhead caused by the DAG size. }

\Rev{Beyond the comparison of the LP relaxations, the statistics reported in Section~E.3 of the electronic companion also provide some insight into the structure of optimal \BPPS{} solutions. On the random instances, optimal bins are generally highly utilized, with an average fill rate of $92.4\%$, and contain on average $5.8$ items belonging to $1.6$ active classes. The real-world solutions exhibit an even more homogeneous class composition, with exactly one active class and $1.7$ items per bin on average, while maintaining an average fill rate of $87.8\%$.}

\section{Conclusions}
\label{sec:conclusions}

In this paper, we introduced and studied the \emph{Bin Packing Problem with Setups} (\BPPS{}), a novel generalization of the classical Bin Packing Problem in which items are partitioned into classes and, whenever an item from a given class is packed into a bin, a setup weight and cost are incurred.  
This setting models a wide range of practical applications where setup operations are required, particularly in production planning and logistics.  
We proposed a natural ILP formulation for the \BPPS{}. We analyzed the structural properties of its LP relaxation, which admits a closed-form optimal solution but may yield arbitrarily weak lower bounds. To strengthen the relaxation, we introduced a new family of valid inequalities, the \emph{Minimum Classes Inequalities} (MCIs), and proved that their inclusion guarantees a worst-case performance ratio of $1/2$. We also introduced the \emph{Minimum Bins Inequality} (MBI), which further improved the quality of the LP relaxation.  
\Rev{Beyond strengthening the natural formulation, we developed an arc-flow formulation for the \BPPS{}, based on a tailored graph construction and compression procedure that encodes class activation, setup weights, and setup costs directly in the graph. We proved that the LP relaxation of this formulation dominates that of the natural formulation, and we extended the MCIs and the MBI to the arc-flow model as well.}
To support computational evaluation, we developed a publicly available benchmark library of \Rev{$576$ randomly generated} \BPPS{} instances, designed to capture a broad spectrum of instance characteristics, including variations in item weights, class distributions, setup costs, and setup weights\Rev{, complemented by $36$ real-world instances arising from a vehicle-routing application in the distribution of fresh and frozen grocery products}. Using this library, we conducted an extensive computational campaign to assess the performance of \Rev{both formulation families and} the enhancements proposed for \Rev{each}. Our experiments show that \Rev{the proposed enhancements substantially improve computational performance within both formulation families, increasing the number of instances solved to proven optimality and reducing the optimality gaps on unsolved instances.} \Rev{Comparing the two formulations, the item weight distribution emerges as the dominant factor: the natural formulation is generally more effective on instances with small or medium item weights, while the arc-flow formulation dominates on instances with large item weights and on the real-world testbed, where it solves all instances to optimality and consistently yields tighter LP relaxations. This complementary behavior indicates that neither formulation is consistently superior, and that the instance structure should guide the choice between them.}

Future research directions include investigating the polyhedral structure of \Rev{the proposed formulations}, developing a branch-and-price algorithm to solve a set-covering formulation of the problem via column generation, and designing tailored exact and heuristic algorithms that exploit the specific role of class-induced setup constraints. \Rev{The formulations and the benchmark library introduced in this paper provide a natural starting point for this line of work.} Another promising avenue is to develop polynomial-time approximation algorithms with provable performance guarantees.

\section*{Acknowledgments}
The authors thank the anonymous reviewers and the editor for their valuable comments and thorough review of the paper.

\footnotesize
\bibliographystyle{abbrvnat}

\begin{thebibliography}{42}
\providecommand{\natexlab}[1]{#1}
\providecommand{\url}[1]{\texttt{#1}}
\expandafter\ifx\csname urlstyle\endcsname\relax
  \providecommand{\doi}[1]{doi: #1}\else
  \providecommand{\doi}{doi: \begingroup \urlstyle{rm}\Url}\fi

\bibitem[Akinc(2006)]{A06}
U.~Akinc.
\newblock Approximate and exact algorithms for the fixed-charge knapsack problem.
\newblock \emph{European Journal of Operational Research}, 170\penalty0 (2):\penalty0 363--375, 2006.

\bibitem[Altay et~al.(2008)Altay, Jr., and Bretthauer]{ARB08}
N.~Altay, P.~R. Jr., and K.~Bretthauer.
\newblock Exact and heuristic solution approaches for the mixed integer setup knapsack problem.
\newblock \emph{European Journal of Operational Research}, 190\penalty0 (3):\penalty0 598--609, 2008.

\bibitem[Baldacci et~al.(2024)Baldacci, Coniglio, Cordeau, and Furini]{baldacci_numerically_2024}
R.~Baldacci, S.~Coniglio, J.-F. Cordeau, and F.~Furini.
\newblock A numerically exact algorithm for the bin-packing problem.
\newblock \emph{INFORMS Journal on Computing}, 36\penalty0 (1):\penalty0 141--162, 2024.

\bibitem[Baldi et~al.(2019)Baldi, Manerba, Perboli, and Tadei]{Baldi2019}
M.~M. Baldi, D.~Manerba, G.~Perboli, and R.~Tadei.
\newblock A generalized bin packing problem for parcel delivery in last-mile logistics.
\newblock \emph{European Journal of Operational Research}, 274\penalty0 (3):\penalty0 990--999, 2019.

\bibitem[Barkel et~al.(2025)Barkel, Delorme, Malaguti, and Monaci]{BOLOGNA25}
M.~Barkel, M.~Delorme, E.~Malaguti, and M.~Monaci.
\newblock Bounds and heuristic algorithms for the bin packing problem with minimum color fragmentation.
\newblock \emph{European Journal of Operational Research}, 320\penalty0 (1):\penalty0 57--68, 2025.

\bibitem[Bergman et~al.(2019)Bergman, Cardonha, and Mehrani]{bergman2019binary}
D.~Bergman, C.~Cardonha, and S.~Mehrani.
\newblock Binary decision diagrams for bin packing with minimum color fragmentation.
\newblock In L.~Rousseau and K.~Stergiou, editors, \emph{{CPAIOR} 2019}, volume 11494 of \emph{LNCS}, pages 57--66. Springer, 2019.

\bibitem[Bishara(2006)]{bishara2006cold}
R.~Bishara.
\newblock Cold chain management - {A}n essential component of the global pharmaceutical supply chain.
\newblock \emph{American Pharmaceutical Review}, 9:\penalty0 105--109, 2006.

\bibitem[Borges et~al.(2020)Borges, Miyazawa, Schouery, and Xavier]{BORGES2020106455}
Y.~G. Borges, F.~K. Miyazawa, R.~C. Schouery, and E.~C. Xavier.
\newblock Exact algorithms for class-constrained packing problems.
\newblock \emph{Computers \& Industrial Engineering}, 144:\penalty0 106455, 2020.

\bibitem[Brand{\~{a}}o and Pedroso(2016)]{Brand16}
F.~Brand{\~{a}}o and J.~P. Pedroso.
\newblock Bin packing and related problems: General arc-flow formulation with graph compression.
\newblock \emph{Comput. Oper. Res.}, 69:\penalty0 56--67, 2016.

\bibitem[Brecht et~al.(2019)Brecht, Brecht, and Saenz]{BRECHT2019591}
P.~E. Brecht, J.~K. Brecht, and J.~E. Saenz.
\newblock Chapter 18 - {T}emperature-controlled transport for air, land, and sea.
\newblock In E.~M. Yahia, editor, \emph{Postharvest Technology of Perishable Horticultural Commodities}, pages 591--637. Woodhead Publishing, 2019.

\bibitem[Casazza and Ceselli(2014)]{casazza2014mathematical}
M.~Casazza and A.~Ceselli.
\newblock Mathematical programming algorithms for bin packing problems with item fragmentation.
\newblock \emph{Computers \& Operations Research}, 46:\penalty0 1--11, 2014.

\bibitem[Ceselli and Righini(2008)]{ceselli2008optimization}
A.~Ceselli and G.~Righini.
\newblock An optimization algorithm for the ordered open-end bin-packing problem.
\newblock \emph{Operations Research}, 56\penalty0 (2):\penalty0 425--436, 2008.

\bibitem[Chajakis and Guignard(1994)]{CG94}
E.~Chajakis and M.~Guignard.
\newblock Exact algorithms for the setup knapsack problem.
\newblock \emph{INFOR: Information Systems and Operational Research}, 32\penalty0 (3):\penalty0 124--142, 1994.

\bibitem[Chebil and Khemakhem(2015)]{CHEBIL201540}
K.~Chebil and M.~Khemakhem.
\newblock A dynamic programming algorithm for the knapsack problem with setup.
\newblock \emph{Computers \& Operations Research}, 64:\penalty0 40 -- 50, 2015.

\bibitem[Coffman et~al.(2013)Coffman, Csirik, Galambos, Martello, and Vigo]{coffman2013bin}
E.~G.~J. Coffman, J.~Csirik, G.~Galambos, S.~Martello, and D.~Vigo.
\newblock Bin packing approximation algorithms: survey and classification.
\newblock In \emph{Handbook of combinatorial optimization}, pages 455--531. Springer, 2013.

\bibitem[Crainic et~al.(2021)Crainic, {Djeumou Fomeni}, and Rei]{Crainic2021}
T.~G. Crainic, F.~{Djeumou Fomeni}, and W.~Rei.
\newblock Multi-period bin packing model and effective constructive heuristics for corridor-based logistics capacity planning.
\newblock \emph{Computers \& Operations Research}, 132:\penalty0 105308, 2021.

\bibitem[{de Lima} et~al.(2022){de Lima}, Alves, Clautiaux, Iori, and {Valério de Carvalho}]{DELIMA20223}
V.~L. {de Lima}, C.~Alves, F.~Clautiaux, M.~Iori, and J.~M. {Valério de Carvalho}.
\newblock Arc flow formulations based on dynamic programming: Theoretical foundations and applications.
\newblock \emph{European Journal of Operational Research}, 296\penalty0 (1):\penalty0 3--21, 2022.

\bibitem[{Della Croce} et~al.(2017){Della Croce}, Salassa, and Scatamacchia]{DSS17}
F.~{Della Croce}, F.~Salassa, and R.~Scatamacchia.
\newblock An exact approach for the 0–1 knapsack problem with setups.
\newblock \emph{Computers \& Operations Research}, 80:\penalty0 61--67, 2017.

\bibitem[Dell'Amico et~al.(2012)Dell'Amico, D{\'{\i}}az, and Iori]{DellAmicoDI12}
M.~Dell'Amico, J.~C.~D. D{\'{\i}}az, and M.~Iori.
\newblock The bin packing problem with precedence constraints.
\newblock \emph{Operations Research}, 60\penalty0 (6):\penalty0 1491--1504, 2012.

\bibitem[Dell'Amico et~al.(2020)Dell'Amico, Furini, and Iori]{DellAmicoFI20}
M.~Dell'Amico, F.~Furini, and M.~Iori.
\newblock A branch-and-price algorithm for the temporal bin packing problem.
\newblock \emph{Computers \& Operations Research}, 114:\penalty0 104825, 2020.

\bibitem[Delorme et~al.(2016{\natexlab{a}})Delorme, Iori, and Martello]{DELORME20161}
M.~Delorme, M.~Iori, and S.~Martello.
\newblock Bin packing and cutting stock problems: Mathematical models and exact algorithms.
\newblock \emph{European Journal of Operational Research}, 255\penalty0 (1):\penalty0 1 -- 20, 2016{\natexlab{a}}.

\bibitem[Delorme et~al.(2016{\natexlab{b}})Delorme, Iori, and Martello]{ManuelEJOR}
M.~Delorme, M.~Iori, and S.~Martello.
\newblock Bin packing and cutting stock problems: Mathematical models and exact algorithms.
\newblock \emph{European Journal of Operational Research}, 255\penalty0 (1):\penalty0 1--20, 2016{\natexlab{b}}.

\bibitem[Focacci et~al.(2016)Focacci, Furini, Gabrel, Godard, and Shen]{FocacciFGGS16}
F.~Focacci, F.~Furini, V.~Gabrel, D.~Godard, and X.~Shen.
\newblock {MIP} formulations for a rich real-world lot-sizing problem with setup carryover.
\newblock In R.~Cerulli, S.~Fujishige, and A.~R. Mahjoub, editors, \emph{{ISCO} 2016}, volume 9849 of \emph{LNCS}, pages 123--134. Springer, 2016.

\bibitem[Furini et~al.(2018)Furini, Monaci, and Traversi]{furini2018exact}
F.~Furini, M.~Monaci, and E.~Traversi.
\newblock Exact approaches for the knapsack problem with setups.
\newblock \emph{Computers \& Operations Research}, 90:\penalty0 208--220, 2018.

\bibitem[Iori and Martello(2010)]{iori_routing_2010}
M.~Iori and S.~Martello.
\newblock Routing problems with loading constraints.
\newblock \emph{{TOP}}, 18\penalty0 (1):\penalty0 4--27, 2010.

\bibitem[Johnson et~al.(1974)Johnson, Demers, Ullman, Garey, and Graham]{Johnson1974WorstCasePB}
D.~S. Johnson, A.~J. Demers, J.~D. Ullman, M.~R. Garey, and R.~L. Graham.
\newblock Worst-case performance bounds for simple one-dimensional packing algorithms.
\newblock \emph{SIAM Journal on Computing}, 3:\penalty0 299--325, 1974.

\bibitem[Lin(1998)]{L98}
E.~Lin.
\newblock A bibliographical survey on some well-known non-standard knapsack problems.
\newblock \emph{INFOR: Information Systems and Operational Research}, 36\penalty0 (4):\penalty0 274--317, 1998.

\bibitem[Martello and Toth(1990)]{MT90}
S.~Martello and P.~Toth.
\newblock \emph{Knapsack Problems: Algorithms and Computer Implementations}.
\newblock John Wiley \& Sons, 1990.

\bibitem[Martinovic et~al.(2023)Martinovic, Strasdat, de~Carvalho, and Furini]{MartinovicSCF23}
J.~Martinovic, N.~Strasdat, J.~M.~V. de~Carvalho, and F.~Furini.
\newblock A combinatorial flow-based formulation for temporal bin packing problems.
\newblock \emph{European Journal of Operational Research}, 307\penalty0 (2):\penalty0 554--574, 2023.

\bibitem[Mehrani et~al.(2022)Mehrani, Cardonha, and Bergman]{mehrani2022models}
S.~Mehrani, C.~Cardonha, and D.~Bergman.
\newblock Models and algorithms for the bin-packing problem with minimum color fragmentation.
\newblock \emph{INFORMS Journal on Computing}, 34\penalty0 (2):\penalty0 1070--1085, 2022.

\bibitem[Michel et~al.(2009)Michel, Perrot, and Vanderbeck]{MPV09}
S.~Michel, N.~Perrot, and F.~Vanderbeck.
\newblock Knapsack problems with setups.
\newblock \emph{European Journal of Operational Research}, 196\penalty0 (3):\penalty0 909--918, 2009.

\bibitem[Pferschy and Scatamacchia(2018)]{pferschy2018improved}
U.~Pferschy and R.~Scatamacchia.
\newblock Improved dynamic programming and approximation results for the knapsack problem with setups.
\newblock \emph{International Transactions in Operational Research}, 25\penalty0 (2):\penalty0 667--682, 2018.

\bibitem[Pochet and Wolsey(2010)]{PochetBOOK}
Y.~Pochet and L.~A. Wolsey.
\newblock \emph{Production Planning by Mixed Integer Programming}.
\newblock Springer Publishing Company, Incorporated, 1st edition, 2010.

\bibitem[Pollaris et~al.(2015)Pollaris, Braekers, Caris, Janssens, and Limbourg]{PollarisBCJL15}
H.~Pollaris, K.~Braekers, A.~Caris, G.~K. Janssens, and S.~Limbourg.
\newblock Vehicle routing problems with loading constraints: state-of-the-art and future directions.
\newblock \emph{OR Spectrum}, 37\penalty0 (2):\penalty0 297--330, 2015.

\bibitem[Sadykov and Vanderbeck(2013)]{sadykov2013bin}
R.~Sadykov and F.~Vanderbeck.
\newblock Bin packing with conflicts: a generic branch-and-price algorithm.
\newblock \emph{INFORMS Journal on Computing}, 25\penalty0 (2):\penalty0 244--255, 2013.

\bibitem[Sawik(2011)]{SawikBOOK}
T.~Sawik.
\newblock \emph{Scheduling in Supply Chains Using Mixed Integer Programming}.
\newblock Wiley, 2011.

\bibitem[Schwerin and Wäscher(1997)]{SCHWERIN1997377}
P.~Schwerin and G.~Wäscher.
\newblock The bin-packing problem: A problem generator and some numerical experiments with {FFD} packing and {MTP}.
\newblock \emph{International Transactions in Operational Research}, 4\penalty0 (5):\penalty0 377--389, 1997.

\bibitem[Shachnai and Tamir(2001)]{Shachnai2001313}
H.~Shachnai and T.~Tamir.
\newblock Polynomial time approximation schemes for class-constrained packing problems.
\newblock \emph{Journal of Scheduling}, 4\penalty0 (6):\penalty0 313 – 338, 2001.

\bibitem[Val{\'e}rio~de Carvalho(1999)]{deCarvalho1999}
J.~M. Val{\'e}rio~de Carvalho.
\newblock Exact solution of bin‐packing problems using column generation and branch‐and‐bound.
\newblock \emph{Annals of Operations Research}, 86\penalty0 (0):\penalty0 629--659, 1999.

\bibitem[Wei et~al.(2020)Wei, Luo, Baldacci, and Lim]{wei_new_2020}
L.~Wei, Z.~Luo, R.~Baldacci, and A.~Lim.
\newblock A {New} {Branch}-and-{Price}-and-{Cut} {Algorithm} for {One}-{Dimensional} {Bin}-{Packing} {Problems}.
\newblock \emph{INFORMS Journal on Computing}, 32\penalty0 (2):\penalty0 428--443, 2020.

\bibitem[Xavier and Miyazawa(2008)]{E2006class}
E.~Xavier and F.~Miyazawa.
\newblock The class constrained bin packing problem with applications to video-on-demand.
\newblock \emph{Theoretical Computer Science}, 393\penalty0 (1):\penalty0 240--259, 2008.

\bibitem[Yang and Bulfin(2009)]{Yang2009}
Y.~Yang and R.~Bulfin.
\newblock An exact algorithm for the knapsack problem with setup.
\newblock \emph{International Journal of Operational Research}, 5\penalty0 (3):\penalty0 280--291, 2009.

\end{thebibliography}

\biboptions{authoryear}
\setlength{\bibsep}{3pt}    

\begin{spacing}{0.95}

\end{spacing}

\normalsize

\pagebreak

\setcounter{section}{0}
\setcounter{subsection}{0}
\setcounter{equation}{0}
\setcounter{figure}{0}
\setcounter{table}{0}
\setcounter{page}{1}

\renewcommand{\thesection}{E.\arabic{section}}
\renewcommand{\thesubsection}{E.\arabic{section}.\arabic{subsection}}

\renewcommand{\theequation}{E.\arabic{equation}}
\renewcommand{\thefigure}{E.\arabic{figure}}
\renewcommand{\thetable}{E.\arabic{table}}

\section*{\large Electronic companion}

\section{Notation summary}

In Table~\ref{tab:notation}, we summarize the notation used throughout the paper.
The first three blocks of the table list, respectively, the data of the classical \BPP{}, the additional data specific to the \BPPS{}, and the auxiliary parameters and valid bounds used in our article.
The fourth block describes the natural ILP formulation $\fone$ and its enhancements ($\ftwo$, $\ftwobis$, and $\fthree$).
The fifth and sixth blocks introduce, respectively, the notation for the arc-flow DAG $\mathcal{G}$ and the arc-flow formulation $\afone$ and its enhancements ($\aftwo$ and $\afthree$).
The last block reports the optimal objective function values of the LP relaxations of all formulations, which provide lower bounds on~$\opt$.

\scriptsize
\renewcommand{\arraystretch}{1}
\tabcolsep 4pt
\begin{longtable}[c]{lp{10.8cm}}

\caption{\footnotesize{Summary of notation}}
\label{tab:notation}

\endfirsthead
\endhead

\endfoot
\endlastfoot

\toprule
\textbf{\textbf{\BPP\ givens}}  & \multicolumn{1}{l}{\textbf{Description}} \\
\midrule
$d \in \mathbb{Z}_{> 0}$ & Bin capacity \\
$n \in \mathbb{Z}_{> 0}$ & Number of items \\
$\mathcal{I} = \{1, 2, \dots, n\}$ & Set of items \\
$w_i \in \mathbb{Z}_{> 0}$ & Weight of item $i \in \mathcal{I}$ \\
$\beta \in \mathbb{Z}_{> 0}$ & Optimal objective function value for the \BPP{}\\
\midrule
{\textbf{\BPPS\ givens}}  & \multicolumn{1}{l}{\textbf{Description}} \\
\midrule
$m \in \mathbb{Z}_{\ge 1}$ & Number of classes \\
$\mathcal{C} = \{1, 2, \dots, m\}$ & Set of item classes \\
$\mathcal{P} = \{\items_1, \items_2,\dots, \items_m\}$ & Partition of the items into classes \\
$\mathcal{I}_c \subseteq \items$ & Subset of items belonging to class $c \in \mathcal{C}$ \\
$s_c \in \mathbb{Z}_{\ge 0}$ & Setup weight for class $c \in \mathcal{C}$ \\
$f_c \in \mathbb{Z}_{\ge 0}$ & Setup cost for class $c \in \mathcal{C}$ \\
$\bincost \in \mathbb{Z}_{> 0}$ & Bin cost \\
$\mathcal{S} \subseteq 2^\items$ & Feasible \BPPS\ partition into packing patterns \\
$S \in \mathcal{S}$ & A single packing pattern (bin) belonging to a feasible \BPPS\ partition $\mathcal{S}$ \\
$\mathcal{C}(S) = \{c \in \classes : S \cap \items_c \ne \emptyset\}$ & Set of classes active in a subset of items $S \subseteq \items$ \\
$\UB(\mathcal{S}) \in \mathbb{Z}_{> 0}$ & Objective function value of a feasible \BPPS{} partition $\mathcal{S}$\\
$\opt \in \mathbb{Z}_{\ge 2}$ & Optimal objective function value for the \BPPS{}\\
\midrule
{\textbf{\BPPS\ additional givens}}  & \multicolumn{1}{l}{\textbf{Description}} \\
\midrule
$\solclassopt \subseteq 2^{\items_c}$ & Optimal solution for the \BPP\ with items of class $c \in \classes$ and bin capacity $d - s_c$\\
$\beta_c$ & Optimal objective function value for the \BPP\ with items of class $c \in \classes$ and bin capacity  $d - s_c$\\
$\overline\beta_c$ & Upper bound on the number of bins to pack all items in $\items_c$ and bin capacity  $d - s_c$\\
$ \binnumberLBclass_c = \lceil \sum_{i \in \items_c} w_i  \, / (d - s_c)\rceil$ & Lower bound on the number of bins required for class $c \in \classes$ \\
$\binnumberUB$ & Upper bound on the number of bins used in any optimal \BPPS\ solution\\
$\mathcal{B} = \{1, 2, \dots, \binnumberUB\}$ & Set of available bins \\
$\hat{\binnumberUB} = \sum_{c\in\classes}\overline\beta_c$ & Upper bound on the number of bins used in any optimal \BPPS\ solution \\
$\binnumberLB=\lceil(\sum_{i \in \items} w_i \;+\; \sum_{c \in \classes} \binnumberLBclass_c \, s_c)/d \rceil$ & Lower bound on the number of bins used in any optimal \BPPS\ solution  \\
\midrule
\textbf{Natural formulation for the BPPS}  & \multicolumn{1}{l}{\textbf{Description}} \\
\midrule
$\fone$ & Natural formulation for the \BPPS \\
$\ftwo$ & $\fone$ with MCIs \\
$\ftwobis$ & $\fone$ with MCIs and MBI\\
$\fthree$ & $\fone$ with MCIs and MBI, and UB to the number of bins in any optimal solution \\
\midrule
\textbf{Arc-flow DAG notation}  & \multicolumn{1}{l}{\textbf{Description}} \\*
\midrule
$\mathcal{G} = (\mathcal{V}, \mathcal{A})$ & Arc-flow DAG, with vertex set $\mathcal{V}$ and arc set $\mathcal{A}$ \\
$\sigma$, $\tau$ & Source and sink vertices of $\mathcal{G}$ \\
$(\ell, t, e)$ & Vertex of $\mathcal{G}$, identified by load $\ell \in \mathbb{Z}_{\ge 0}$, stage $t \in \mathbb{Z}_{\ge 0}$, and active flag $e \in \{-1, 0, 1\}$ \\
$\omega_a \in \mathbb{Z}_{\ge 0}$ & Weight of arc $a \in \mathcal{A}$ \\
$\lambda_a \in \mathbb{Z}_{\ge 0}$ & Cost of arc $a \in \mathcal{A}$ \\
$\itemarcs \subseteq \mathcal{A}$ & Set of item arcs associated with item $i \in \items$ \\
$\classarcs \subseteq \mathcal{A}$ & Set of setup arcs associated with class $c \in \classes$ \\
$\phi(u, \tau)$ & Sink label of $u \in \mathcal{V}$: weight of the maximum weight path from $u$ to $\tau$ in $\mathcal{G}$ \\
$\phi(\sigma, u)$ & Source label of $u \in \mathcal{V}$: weight of the maximum weight path from $\sigma$ to $u$ in $\mathcal{G}$ \\
\midrule
\textbf{Arc-flow formulation for the BPPS}  & \multicolumn{1}{l}{\textbf{Description}} \\
\midrule
$\afone$ & Arc-flow formulation for the \BPPS \\
$\aftwo$ & $\afone$ with MCIs \\
$\afthree$ & $\afone$ with MCIs and MBI \\
\midrule
\textbf{Lower bounds on $\opt $}  & \multicolumn{1}{l}{\textbf{Description}} \\
\midrule
$\zeta(\foneLP)$, $\zeta(\ftwoLP)$, $\zeta(\ftwoLPbis)$, $\zeta(\fthreeLP)$ & Optimal objective function value of the LP relaxation  of $\fone$, $\ftwo$, $\ftwobis$ and $\fthree$, respectively   \\
$\zeta(\afoneLP)$, $\zeta(\aftwoLP)$, $\zeta(\afthreeLP)$ & Optimal objective function value of the LP relaxation  of $\afone$, $\aftwo$ and $\afthree$, respectively   \\
$\ratio(LB) \in [0,1]$ & Worst-case performance ratio of a lower bound $LB$ for the \BPPS{} \\
\bottomrule
\end{longtable}
\normalsize

\section{Deferred proofs}

\subsection{A structural property of the BPP}

An instance of the \BPP{} is defined by a bin capacity \( d \) and a set of 
items \( \items = \{1,2,\ldots,n\} \), where each item \( i \in \items \) has a positive 
integer weight \( w_i \in \mathbb{Z}_{>0} \). The minimum number of bins required to pack 
all items of a BPP instance is denoted by \( \beta \). 
In the literature (see, e.g., \cite[Section~8.3.2]{MT90}), the following inequality is known 
to hold for every \BPP{} instance:
\begin{equation}
\label{prop:lemmaBPP}
\left\lceil \frac{\sum_{i \in \items} w_i}{d} \right\rceil \;\geq\; \frac{\beta}{2},
\end{equation}
which states that the ceiling of the total weight of the items divided by the bin capacity is always at least 
half the minimum number of bins.
The next lemma provides a slight generalization of this result, used in the proof of 
Proposition~\ref{prop:ratio_MCI}, showing that, whenever more than one bin is required in any optimal \BPP{} solution, the total weight of the items must exceed half of the 
total capacity of the bins in that solution.  

\begin{lemma}\label{prop:lemma}
For every \BPP{} instance, if \( \beta > 1 \) then
\begin{equation}
\sum_{i \in \items} w_i \;>\; \frac{\beta \,
d}{2}.
\label{lemma}
\end{equation}
\end{lemma}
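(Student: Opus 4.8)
The plan is to argue directly from an optimal \BPP{} solution, using a one-line exchange argument rather than trying to squeeze the bound out of the known inequality~\eqref{prop:lemmaBPP}. Fix a partition of $\items$ into $\beta$ bins $B_1,\dots,B_\beta$ realizing the minimum, and write $\ell_k=\sum_{i\in B_k}w_i$ for the load of bin $B_k$, so that $\sum_{k=1}^{\beta}\ell_k=\sum_{i\in\items}w_i$. The crucial observation is that minimality of $\beta$ forbids $\ell_j+\ell_k\le d$ for any two distinct bins: otherwise all items of $B_j\cup B_k$ would fit into a single bin of capacity $d$, yielding a feasible solution with $\beta-1$ bins, a contradiction. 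Hence $\ell_j+\ell_k>d$ for every pair $j\ne k$; note that the hypothesis $\beta>1$ is exactly what guarantees that at least one such pair exists.

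The second step is an aggregation over all $\binom{\beta}{2}$ unordered pairs of bins. Summing the strict inequality $\ell_j+\ell_k>d$ over all pairs, and using that each $\ell_k$ appears in exactly $\beta-1$ of them, I obtain
\[
(\beta-1)\sum_{i\in\items}w_i \;=\; \sum_{1\le j<k\le\beta}(\ell_j+\ell_k) \;>\; \binom{\beta}{2}\,d \;=\; \frac{\beta(\beta-1)}{2}\,d,
\]
and dividing by $\beta-1>0$ gives precisely $\sum_{i\in\items}w_i>\beta d/2$. If one wishes, the integrality of the weights upgrades $\ell_j+\ell_k>d$ to $\ell_j+\ell_k\ge d+1$ and yields the marginally sharper $\sum_{i\in\items}w_i\ge\beta(d+1)/2$, from which the strict inequality is immediate; but this refinement is not needed.

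The only real subtlety — more a bookkeeping point than an obstacle — is picking the right way to combine the pairwise inequalities. The familiar weaker fact that at most one bin can be at most half full only delivers $\sum_i w_i>(\beta-1)d/2$, and~\eqref{prop:lemmaBPP} by itself gives merely $\sum_i w_i>(\beta/2-1)d$; neither reaches $\beta d/2$. Pairing the bins two at a time also falls short when $\beta$ is odd, since the leftover bin contributes too little to close the gap. Summing over \emph{all} pairs at once is what makes the constant land on exactly $1/2$, and the step is valid precisely because $\beta\ge2$, which is the stated assumption.
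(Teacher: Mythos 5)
Your proof is correct. Both you and the paper rest on the same key fact — minimality of $\beta$ forbids any two bins from being mergeable, i.e.\ $\ell_j+\ell_k>d$ for every pair $j\ne k$ — but you aggregate it differently. The paper first weakens this to ``at most one bin has load at most $d/2$'' and then argues by cases: if every bin is more than half full the bound is immediate; otherwise it pairs the single light bin with one heavy bin via $\ell_{\overline b}+\ell_{\underline b}>d$ and adds $d/2$ for each of the remaining $\beta-2$ bins, landing on $\frac{d}{2}(\beta-2)+d=\frac{\beta d}{2}$. You instead sum the pairwise inequality over all $\binom{\beta}{2}$ pairs and divide by $\beta-1$, which eliminates the case distinction entirely and makes it transparent where the constant $1/2$ comes from (it is forced by the symmetry of the double count, not by a lucky choice of which bins to pair). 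Your route is slightly slicker and also hands you the marginally sharper integral bound $\sum_i w_i\ge\beta(d+1)/2$ for free; the paper's route is closer to the classical ``first-fit'' style argument and generalizes more directly to statements about the multiset of bin loads. Your closing remark is also accurate: the naive pairing of bins two at a time, or inequality~\eqref{prop:lemmaBPP} alone, would not reach the constant $\beta d/2$, so the full pairwise (or the paper's one-light-bin) argument really is needed.
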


\begin{proof}
Consider any optimal solution to the \BPP{}.  
Let $\mathcal{S}^\star$ be the set of the $\beta$ bins used in this optimal solution.
Notice that, in an optimal \BPP{} solution, there cannot exist two bins with total weight lower than or equal to $\frac{d}{2}$ since, otherwise, they could be merged into a single bin.
If all bins have load greater than $\frac{d}{2}$, the claim follows immediately since
\[
\sum_{i\in\items} w_i \;=\; \sum_{S \in \mathcal{S}^\star} \sum_{i\in S} w_i \;>\;  \frac{\beta \,d}{2}.
\]

Otherwise, there is at most one bin $\check{S} \in \mathcal{S}^\star$ with total load $\sum_{i\in\check{S}} w_i \le \frac{d}{2}$.  
Since $\beta>1$, there exists another bin $\hat{S} \in \mathcal{S}^\star$; by optimality, we must have
\[
\sum_{i \in \check{S}} w_i + \sum_{i\in \hat{S}} w_i \;>\; d,
\]
otherwise the two bins $\check{S}$ and $\hat{S}$ could be merged.

Hence,
\[
\sum_{i\in\items} w_i
= \sum_{\substack{i\in\items: \\ i \notin \check{S} \cup \hat{S}}} w_i
+ \sum_{i\in \check{S}} w_i
+ \sum_{i\in \hat{S}} w_i
\;>\; \frac{d}{2}(\beta-2) + d
= \frac{\beta \, d}{2}.
\]

\end{proof}

\subsection{Proof of Proposition \ref{prop:lprelax}}

\begin{proof}
Multiplying the capacity constraints~\eqref{con:kanto-cap} by the positive bin cost~$\bincost$ and summing over all $b\in\bins$ gives
\begin{equation*}
    \bincost \sum_{i \in \items} w_i \sum_{b \in \bins} x_{ib} 
    + \bincost \sum_{c \in \classes} s_c \sum_{b \in \bins} y_{cb} 
    \ \le\ d \sum_{b \in \bins} \bincost \, z_b .
\end{equation*}
By the assignment constraints~\eqref{con:kanto-ass}, $\sum_{b \in \bins} x_{ib}=1$ for every $i\in\items$.  
Since every class $c \in \classes$ contains at least one item $i \in \items_c$, the linking constraints~\eqref{con:kanto-alpha} imply
\begin{equation}
\label{KKK}
\sum_{b\in\bins} y_{cb} \;\ge\; \sum_{b\in\bins} x_{ib} \;=\; 1.    
\end{equation}

Therefore
\begin{equation}
\label{LB_reason}    
    \bincost \sum_{i \in \items} w_i + \bincost \sum_{c \in \classes} s_c
    \ \le\ d \sum_{b \in \bins} \bincost \, z_b,
    \quad\Longrightarrow\quad
    \sum_{b \in \bins} \bincost \, z_b 
    \ \ge\ \frac{\bincost}{d} \!\left( \sum_{i \in \items} w_i + \sum_{c \in \classes} s_c \right).
\end{equation}
Moreover,
\[
    \sum_{b \in \bins} \sum_{c \in \classes} f_c \, y_{cb} 
    \ = \ \sum_{c \in \classes} f_c \sum_{b \in \bins} y_{cb} 
    \ \ge\ \sum_{c \in \classes} f_c.
\]
Therefore, every feasible solution of $\foneLP$ satisfies
\begin{equation}\label{proof:geq_of}
    \sum_{b \in \bins} \!\Bigl( \bincost \, z_{b} + \sum_{c \in \classes} f_c \, y_{cb} \Bigr) 
    \ \ge\   
      \frac{\bincost}{d} \!\left( \sum_{i \in \items} w_i + \sum_{c \in \classes} s_c \right) + \sum_{c \in \classes} f_c
\end{equation}
and, accordingly, the right-hand side of \eqref{proof:geq_of} is a lower bound on $\zeta(\foneLP)$.
Now consider the solution defined in~\eqref{sol:LP1}.  
It satisfies constraints~\eqref{RELAX} by construction.  
Indeed, for every $i\in\items$,
\[
\sum_{b \in \bins} x_{ib}
= \underbrace{\binnumberUB \, \frac{1}{\binnumberUB}}_{\text{by }\eqref{sol:LP1}}
= 1,
\]
so the assignment constraints~\eqref{con:kanto-ass} hold. 
Moreover, for each $b\in\bins$,
\[
\sum_{i \in \items} w_i \, x_{ib} + \sum_{c \in \classes} s_c \, y_{cb}
= \underbrace{\sum_{i \in \items} w_i \, \frac{1}{\binnumberUB}}_{\text{by }\eqref{sol:LP1}}
  + \underbrace{\sum_{c \in \classes} s_c \, \frac{1}{\binnumberUB}}_{\text{by }\eqref{sol:LP1}}
= \frac{1}{\binnumberUB}\!\left(\sum_{i \in \items} w_i + \sum_{c \in \classes} s_c\right)
= \underbrace{d \, z_b}_{\text{by }\eqref{sol:LP1}} ,
\]
so the capacity constraints~\eqref{con:kanto-cap} also hold at equality.  
Finally, the linking constraints~\eqref{con:kanto-alpha} are satisfied since by \eqref{sol:LP1} the $x$-variables and $y$-variables take the same value $1/\binnumberUB$.

Hence, the solution defined in~\eqref{sol:LP1} is a feasible solution for $\foneLP$. Its objective value is
\[
\sum_{b \in \bins} \!\Bigl(\bincost \, z_b + \sum_{c \in \classes} f_c \, y_{cb}\Bigr)
= \underbrace{\bincost \sum_{b\in\bins} \frac{\sum_{i \in \items} w_i + \sum_{c \in \classes} s_c}{\binnumberUB \, d}}_{\text{by }\eqref{sol:LP1}}
\;+\;
\underbrace{\sum_{c \in \classes} f_c \sum_{b \in \bins} \frac{1}{\binnumberUB}}_{\text{by }\eqref{sol:LP1}}
= \frac{\bincost}{d}\!\left(\sum_{i \in \items} w_i + \sum_{c \in \classes} s_c\right) + \sum_{c \in \classes} f_c .
\]
Thus, the objective value of the solution in~\eqref{sol:LP1} coincides with the right-hand side of~\eqref{proof:geq_of}.  
Consequently, the solution attains the lower bound and is therefore optimal, with optimal objective function value as in~\eqref{eq:LPoptval}.
\end{proof}

\subsection{Proof of Proposition \ref{prop:mcoi}}

\begin{proof}
Consider a feasible solution to $\fone$.  
From the assignment constraints~\eqref{con:kanto-ass} and the linking constraints~\eqref{con:kanto-alpha}, we know that, for every $c \in \classes$, if an item of class~$c$ is packed in bin~$b$, then $y_{cb} = 1$.  
Thus, items of any class $c \in \classes$ can only be packed in bins $b$ for which $y_{cb} = 1$.  
For every $c \in \classes$, let us denote this set of bins as 
\[
\bins_c = \{\, b \in \bins \mid y_{cb} = 1 \,\}.
\]
For every $b \in \bins_c$, since $z_b \le 1$, the capacity constraints~\eqref{con:kanto-cap} imply that
\[
\underbrace{\sum_{i \in \items} w_i \, x_{ib}}_{\ge \sum_{i \in \items_c} w_i \, x_{ib}} + s_c + \underbrace{\sum_{g \in \classes, \, g \ne c} s_g \, y_{gb}}_{\ge 0} \;\le\; d
    \quad\Longrightarrow\quad
\sum_{i \in \items_c} w_i \, x_{ib}  \;\le\; d - s_c.
\]
Summing over all $b \in \bins_c$ yields
\[
\sum_{b \in \bins_c} \; \sum_{i \in \items_c} w_i \, x_{ib} \;\le\; \sum_{b \in \bins_c} (d - s_c).
\]
The left-hand side equals $\sum_{i \in \items_c} w_i$, since items $i \in \items_c$ are packed in bins in $\bins_c$, and thus $\sum_{b \in \bins_c} x_{ib} = \sum_{b \in \bins} x_{ib} = 1$.  
The right-hand side equals $|\bins_c|(d - s_c) = \big(\sum_{b \in \bins} y_{cb}\big)(d - s_c)$, since $y_{cb} = 1$ only if $b \in \bins_c$.
Therefore, for every $c \in \classes$, we have
\[
\sum_{i \in \items_c} w_i \;\le\; \left( \sum_{b \in \bins} y_{cb} \right)(d - s_c) 
    \quad\Longrightarrow\quad
\sum_{b \in \bins} y_{cb} \;\ge\; \frac{\sum_{i \in \items_c} w_i}{d - s_c}.
\]
Since $\sum_{b \in \bins} y_{cb}$ is an integer, the right-hand side can be rounded up, which coincides with the definition of~$\binnumberLBclass_c$.
\end{proof}

\subsection{Proof of Proposition \ref{prop:lprelax_2}}

\begin{proof}
The proof follows the same logical scheme as that of Proposition~\ref{prop:lprelax}.  
The only difference is that, instead of using~\eqref{KKK}, we now directly rely on the MCI~\eqref{MCI}.  
It follows that every feasible solution of $\ftwoLP$ satisfies
\begin{equation}\label{eq:LB_ftwoLP}
\sum_{b\in\bins}\!\Bigl(\bincost \, z_b + \sum_{c\in\classes} f_c\,y_{cb}\Bigr)
\;\ge\;
\frac{\bincost}{d}\!\left(\sum_{i\in\items} w_i + \sum_{c\in\classes} \binnumberLBclass_c \, s_c \right)
\;+\;
\sum_{c\in\classes} \binnumberLBclass_c \, f_c.
\end{equation}
and, accordingly, the right-hand side of \eqref{eq:LB_ftwoLP} is a lower bound on $\zeta(\ftwoLP)$.
Now consider the solution defined in~\eqref{sol:LP2}.  
As in Proposition~\ref{prop:lprelax}, it can be verified that this solution satisfies all the constraints of $\ftwoLP$, and is therefore a feasible solution.
Its objective value is
\begin{align*}
    \sum_{b \in \bins} \!\Bigl(\bincost \, z_b + \sum_{c \in \classes} f_c \, y_{cb}\Bigr)
&= \underbrace{\bincost \sum_{b\in\bins} \frac{\sum_{i \in \items} w_i + \sum_{c \in \classes} \binnumberLBclass_c \, s_c}{\binnumberUB \, d}}_{\text{by }\eqref{sol:LP2}}
\;+\;
\underbrace{\sum_{c \in \classes} f_c \sum_{b \in \bins} \frac{\binnumberLBclass_c}{\binnumberUB}}_{\text{by }\eqref{sol:LP2}} \\[1ex]
&= \frac{\bincost}{d}\!\left(\sum_{i \in \items} w_i + \sum_{c \in \classes} \binnumberLBclass_c \, s_c\right) + \sum_{c \in \classes} \binnumberLBclass_c \, f_c .
\end{align*}
Thus, the objective value of this solution coincides with the right-hand side of~\eqref{eq:LB_ftwoLP}.  
Consequently, the solution attains the lower bound and is therefore optimal, with objective function value~\eqref{eq:LPoptval_2}.
\end{proof}

\subsection{Proof of Proposition \ref{prop:ratio_MCI}}

\begin{proof}
The proof relies on constructing a feasible \BPPS{} solution through a three-step procedure, which we refer to as the \emph{Constructive Heuristic Algorithm} (CHA) for the \BPPS{}. 
The CHA constructs the three feasible solutions $\mathcal S_1$, $\mathcal S_2$, and $\mathcal S_3$ whenever the stopping criteria for the corresponding steps are met. The proof considers the solution associated with the first applicable termination case and proves that $\UB(\mathcal S) < 2\,\zeta(\ftwoLP)$ for that case.
Depending on the case, it may be easier to establish the inequality using a worse (higher-cost) solution; this is why we consider different possible terminations of the CHA.

Having a feasible \BPPS{} solution $\mathcal S$ with $\UB(\mathcal S) < 2\,\zeta(\ftwoLP)$ suffices to prove the proposition. Indeed, the thesis is equivalent to
\begin{equation}\label{eq:equivalent_th}
\zeta(\ftwoLP) > \frac12 \,\opt,
\end{equation}
and, since every feasible \BPPS{} solution $\mathcal S$ satisfies $\UB(\mathcal S) \ge \opt$, to prove \eqref{eq:equivalent_th} it suffices to show that $\zeta(\ftwoLP) > \frac12\,\UB(\mathcal S)$ for the feasible solution $\mathcal S$ returned by the CHA.

These are the three steps of the CHA:

\begin{enumerate}

\item \label{itm:uno} For each class $c\in\classes$, let $I_c$ denote the associated \BPP{} instance with item set $\items_c$, weights $\{w_i\}_{i\in\items_c}$, and bin capacity $d_c:=d-s_c$. In the first step of the CHA, for each class $c\in\classes$ we solve the associated \BPP{} instance $I_c$ to optimality. We denote by $\solclassopt$ an optimal solution to $I_c$, and by $\beta_c := |\solclassopt|$ the corresponding optimal number of bins, i.e., the minimum number of bins of capacity $d_c=d-s_c$ required to pack all the items of class~$c$. Let $\tilde\classes\subseteq\classes$ denote the subset of classes (if any) for which $\beta_c=1$, i.e., whose items can be packed into a single bin; for every $c\in\tilde\classes$, $\solclassopt=\{\items_c\}$ is the corresponding single packing pattern. Moreover, since $\beta_c=1$, $\sum_{i\in\mathcal I_c}w_i\le d-s_c$; this yields $\gamma_c=1$ for all $c\in\tilde\classes$.

The outcome of this step is a feasible \BPPS{} solution $\mathcal S_1 = \bigcup_{c\in\classes} \solclassopt$,
which contains, for every class $c\in\classes$, exactly $\beta_c$ bins whose items all belong to class~$c$.
If $\tilde\classes=\emptyset$, i.e., if every class requires at least two bins, the CHA terminates at this point; otherwise, it proceeds to Step~2.
The cost of the feasible solution $\mathcal{S}_1$ is
\begin{equation}\label{eq:cost_step_1}
\UB(\mathcal{S}_1) = \sum_{c\in\classes}\beta_c\,f_c + \bincost \sum_{c\in\classes}\beta_c.
\end{equation}

\item \label{itm:due} In the second step of the CHA, we define an auxiliary \BPP{} instance $I_{\tilde\classes}$ in which each class $c\in\tilde\classes$ is represented by a single \emph{class-aggregated item} of weight $\sum_{i\in\items_c}w_i+s_c$, and the bin capacity is $d$.
We assume $I_{\tilde\classes}$ is solved to optimality, and denote by $\mathcal S^\star(I_{\tilde\classes})$ an optimal solution, and by $\delta := |\mathcal S^\star(I_{\tilde\classes})| $ the corresponding optimal number of bins. 
If $\delta\ge2$, i.e., if the class-aggregated items cannot all be packed into a single bin of capacity $d$, the outcome of this step is obtained by merging some of the bins obtained in Step~\ref{itm:uno}.
In this case, when $\delta\ge2$, the CHA terminates at this point; otherwise, it proceeds to Step~\ref{itm:tre}.

The heuristic solution $\mathcal{S}_2$ obtained at this step uses $\sum_{c\in\classes\setminus\tilde\classes}\beta_c+\delta$ bins, and items of any class $c$ are still packed in exactly $\beta_c$ bins. Hence, its cost is
\begin{equation}\label{eq:cost_step_2}
\UB(\mathcal{S}_2) = \sum_{c\in\classes}\beta_c\,f_c + \bincost\Big(\sum_{c\in\classes\setminus\tilde\classes}\beta_c \,+\, \delta\Big).
\end{equation}

\item \label{itm:tre} In the third step of the CHA, we create a single meta item, of weight $\sum_{c\in\tilde\classes}\Big(\sum_{i\in\items_c}w_i+s_c\Big)$, corresponding to the total weight of the unique packing pattern assigned to a bin in Step~\ref{itm:due} (this step is only reached when $\delta=1$). Next, we select any class $c^\dagger\in\classes\setminus\tilde\classes$; such a class exists, since if $\classes\setminus\tilde\classes=\emptyset$ after Step~\ref{itm:uno} and $\delta=1$, then all items in $\items$ fit into a single bin, which is the trivial case excluded in the Introduction to this paper.
We then try to place the newly created meta item into the residual capacity of one of the packing patterns in $\mathcal S^\star(I_{c^\dagger})$, say, the one with the minimum load.
At the end of this step, there are two possible outcomes, both of which terminate the CHA. If the meta item fits into a packing pattern $S \in\mathcal S^\star(I_{c^\dagger})$, we obtain a feasible \BPPS{} solution $\mathcal{S}_3$ which uses $\sum_{c\in\classes\setminus\tilde\classes}\beta_c$ bins, with every class $c$ still active in exactly $\beta_c$ bins. Hence, its cost is
\begin{equation}\label{eq:cost_step_3_fits}
\UB(\mathcal{S}_3) = \sum_{c\in\classes}\beta_c\,f_c + \bincost\sum_{c\in\classes\setminus\tilde\classes}\beta_c.
\end{equation}

On the other hand, if the meta item does not fit into any packing pattern of $\mathcal S^\star(I_{c^\dagger})$, then the feasible solution $\mathcal{S}_3$ coincides with that from Step~\ref{itm:due}, and its cost (since here $\delta=1$) is
\begin{equation}\label{eq:cost_step_3_no_fit}
\UB(\mathcal{S}_3) = \sum_{c\in\classes}\beta_c\,f_c + \bincost\Big(\sum_{c\in\classes\setminus\tilde\classes}\beta_c\,+\,1\Big).
\end{equation}

\end{enumerate}

We now show that, at every possible termination of the CHA, $\zeta(\ftwoLP)$ is more than half of \eqref{eq:cost_step_1}, \eqref{eq:cost_step_2}, \eqref{eq:cost_step_3_fits}, or \eqref{eq:cost_step_3_no_fit}, depending on the respective termination.

Let us consider $\zeta(\ftwoLP)$, computed as in \eqref{eq:LPoptval_2}:
\[
\zeta(\ftwoLP) \, = \, \sum_{c \in \classes} \binnumberLBclass_c \, f_c
    + \frac{\bincost}{d} \left(\sum_{i \in \items} w_i + \sum_{c \in \classes} \binnumberLBclass_c \, s_c\right),
\]
which has a component of the form $\sum_{c\in\classes}\binnumberLBclass_c f_c$. We observe that all of \eqref{eq:cost_step_1}, \eqref{eq:cost_step_2}, \eqref{eq:cost_step_3_fits}, \eqref{eq:cost_step_3_no_fit} have a component of the form $\sum_{c\in\classes}\beta_c f_c$, and we have
\begin{equation}
    \sum_{c \in \classes} \binnumberLBclass_c \, f_c \ge \frac{\sum_{c \in \classes} \beta_c \,f_c}{2} \nonumber
\end{equation}
because of \eqref{prop:lemmaBPP} applied to the \BPP{} instance $I_c$, for every $c\in\classes$.

Hence, to prove that $\zeta(\ftwoLP)>\frac12\,\UB(\mathcal{S})$, where $\mathcal{S}$ is a feasible \BPPS{} solution returned by the CHA, it remains to show, depending on the termination of the CHA, the following four inequalities:

\begin{itemize}
\item $\displaystyle\frac{\bincost}{d} \left(\sum_{i \in \items} w_i + \sum_{c \in \classes} \binnumberLBclass_c \, s_c\right)>\frac 12\,\bincost\,\sum_{c\in\classes}\beta_c $ if the CHA terminates at the end of Step~\ref{itm:uno}, which means that $\beta_c\ge 2$ for every $c\in\classes$ and $\tilde\classes$ is empty. This inequality can be shown by observing that
\begin{equation*}
    \frac{1}{d} \, \left(\sum_{i \in \items} w_i + \sum_{c \in \classes} \binnumberLBclass_c \; s_c\right) =\frac 1d\, \sum_{c\in\classes}\left(\sum_{i\in\items_{c}}w_i+\binnumberLBclass_cs_c \right)>\frac 12\,\sum_{c\in\classes}\beta_c.
\end{equation*}
Indeed, for every $c\in\classes\setminus\tilde{\classes}$, which includes all classes $c\in\classes$ in this specific termination, we have
\begin{align}\label{eq:before_big_in}
\sum_{i\in\items_c}w_i+\binnumberLBclass_c \, s_c &\ge\sum_{i \in \items_c}\left( w_i +  \frac{w_i}{d-s_c} \; s_c\right)
=\sum_{i \in \items_c}\left(\frac{(d-s_c) \,w_i+s_c \,w_i}{d-s_c} \right)\nonumber\\[2 ex]
&=d \,  \frac{\sum_{i \in \items_c}w_i}{d-s_c} > \frac{\beta_c \, d}2
\end{align}
where the last inequality comes from Lemma \ref{prop:lemma}, applied to the \BPP{} instance $I_c$, which has an optimal solution using at least two bins since $c\in\classes\setminus\tilde{\classes}$.

\item $\displaystyle\frac{\bincost}{d} \left(\sum_{i \in \items} w_i + \sum_{c \in \classes} \binnumberLBclass_c \, s_c\right)>\frac 12\,\bincost\left(\sum_{c\in\classes\setminus\tilde\classes}\beta_c\,+\,\delta\right) $ if the CHA terminates at the end of Step~\ref{itm:due}, which means that $\delta\ge 2$. This inequality can be shown by observing that
\begin{align*}
    \frac{1}{d} \, \left(\sum_{i \in \items} w_i + \sum_{c \in \classes} \binnumberLBclass_c \; s_c\right) &=    
    \frac 1d\left( \sum_{c\in\classes\setminus\tilde{\classes}}\left(\sum_{i\in\items_{c}}w_i+\binnumberLBclass_cs_c \right)
     + \sum_{c \in \tilde\classes} \left( \sum_{i \in \items_c} w_i + s_c \right)
    \right)\\[2 ex]
    &> \frac 12
    \sum_{c\in\classes\setminus\tilde{\classes}}\beta_c \, + \, 
   \frac 12\,\delta.
\end{align*}
where, for every $c\in\classes\setminus\tilde\classes$, we used \eqref{eq:before_big_in}, and
\begin{equation}\label{eq:delta_ineq}
\sum_{c \in \tilde\classes} \left( \sum_{i \in \items_c} w_i + s_c \right) > \frac{\delta\,d}{2}
\end{equation}
is obtained by applying Lemma \ref{prop:lemma} to the \BPP{} instance $I_{\tilde\classes}$ solved in Step~\ref{itm:due}, which has an optimal solution using at least two bins when the CHA terminates at the end of Step~\ref{itm:due}, since $\delta>1$.

\item$\displaystyle\frac{\bincost}{d} \left(\sum_{i \in \items} w_i + \sum_{c \in \classes} \binnumberLBclass_c \, s_c\right)>\frac 12\,\bincost\sum_{c\in\classes\setminus\tilde\classes}\beta_c $ if the CHA terminates at the end of Step~\ref{itm:tre} and the meta item fits into one of the packing patterns in $\mathcal S^\star(I_{c^\dagger})$, for some class $c^\dagger\in\classes\setminus\tilde\classes$. This inequality can be shown by observing that
\begin{align*}
    \frac{1}{d} \, \left(\sum_{i \in \items} w_i + \sum_{c \in \classes} \binnumberLBclass_c \; s_c\right) &=    
    \frac 1d\left( \sum_{c\in\classes\setminus\tilde{\classes}}\left(\sum_{i\in\items_{c}}w_i+\binnumberLBclass_cs_c \right) +
     \sum_{c \in \tilde\classes} \left( \sum_{i \in \items_c} w_i + s_c \right)
    \right)\\[2 ex]
    &> \frac 12
    \sum_{c\in\classes\setminus\tilde{\classes}}\beta_c 
\end{align*}
where, for every $c\in\classes\setminus\tilde\classes$, we used \eqref{eq:before_big_in}.

\item $\displaystyle\frac{\bincost}{d} \left(\sum_{i \in \items} w_i + \sum_{c \in \classes} \binnumberLBclass_c \, s_c\right)>\frac 12\,\bincost\left(\sum_{c\in\classes\setminus\tilde\classes}\beta_c\,+\,1\right) $ if the CHA terminates at the end of Step~\ref{itm:tre} and the meta item does not fit into any of the packing patterns in $\mathcal S^\star(I_{c^\dagger})$. This inequality can be shown by observing that
\begin{align}
    \frac{1}{d} \, \left(\sum_{i \in \items} w_i + \sum_{c \in \classes} \binnumberLBclass_c \; s_c\right) &=    
    \frac 1d\left( 
    \sum_{\substack{c\in\classes\setminus\tilde{\classes}\\ c\ne c^\dagger}}\left(\sum_{i\in\items_{c}}w_i+\binnumberLBclass_cs_c \right)
    + \sum_{i\in\items_{c^\dagger}}w_i+\binnumberLBclass_{c^\dagger}s_{c^\dagger} + \sum_{c \in \tilde\classes} \left( \sum_{i \in \items_c} w_i + s_c \right)
    \right)\nonumber\\[2 ex]
    &> \frac 12
    \sum_{\substack{c\in\classes\setminus\tilde{\classes}\\ c\ne c^\dagger}}\beta_c \, + \, 
   \frac 12\,(\beta_{c^\dagger}+1)=\frac 12\left(\sum_{c\in\classes\setminus\tilde\classes}\beta_c\,+\,1\right).\label{eq:last_ineq}
\end{align}
where, for every $c\in\classes\setminus\tilde\classes$ different from $c^\dagger$, we used \eqref{eq:before_big_in}. 

Since $\mathcal S^\star(I_{c^\dagger})$ is an optimal solution to the \BPP{} instance $I_{c^\dagger}$, no two of its $\beta_{c^\dagger}$ bins can be merged without exceeding the capacity $d_{c^\dagger}$, as otherwise $\mathcal S^\star(I_{c^\dagger})$ would not be optimal. Moreover, by assumption, the meta item --~of weight $\sum_{c\in\tilde\classes}\left(\sum_{i\in\items_c}w_i+s_c\right)$~-- does not fit into the residual capacity of any bin in $\mathcal S^\star(I_{c^\dagger})$ either.
Consider the auxiliary \BPP{} instance with capacity $d-s_{c^\dagger}$, whose items are the $\beta_{c^\dagger}$ meta items obtained by aggregating the contents of each packing pattern in $\mathcal S^\star(I_{c^\dagger})$, plus one further item, namely the meta item of weight $\sum_{c\in\tilde\classes}\left(\sum_{i\in\items_c}w_i+s_c\right)$.
This auxiliary instance is either infeasible, if the additional meta item alone exceeds $d-s_{c^\dagger}$, or its optimal solution uses exactly $\beta_{c^\dagger}+1$ bins. Applying Lemma \ref{prop:lemma} — trivially, in the infeasible case — then yields

\begin{equation*}\label{eq:tre_not_join}
\sum_{i\in\items_{c^\dagger}}w_i+ \sum_{c \in \tilde\classes} \left( \sum_{i \in \items_c} w_i + s_c \right) > \left( d- s_{c^\dagger}\right)\,\frac{\beta_{c^\dagger}+1}2.
\end{equation*}

Hence, to obtain \eqref{eq:last_ineq} it suffices to show that
\[
\frac 1d\left( 
    \sum_{\substack{c\in\classes\setminus\tilde{\classes}\\ c\ne c^\dagger}}\left(\sum_{i\in\items_{c}}w_i+\binnumberLBclass_cs_c \right)
    +\binnumberLBclass_{c^\dagger}s_{c^\dagger} + \left( d- s_{c^\dagger}\right)\,\frac{\beta_{c^\dagger}+1}2
    \right)\nonumber
    \ge \frac 12
    \sum_{\substack{c\in\classes\setminus\tilde{\classes}\\ c\ne c^\dagger}}\beta_c \, + \, 
   \frac 12\,(\beta_{c^\dagger}+1),
\]
and therefore it is sufficient to show that
\[
\binnumberLBclass_{c^\dagger}\ge\frac{\beta_{c^\dagger}+1}2.
\]
This is true because $\binnumberLBclass_{c^\dagger}=\left\lceil \frac{\sum_{i\in\items_{c^\dagger}}w_i}{d-s_{c^\dagger}}\right\rceil$, and
$\frac{\sum_{i\in\items_{c^\dagger}}w_i}{d-s_{c^\dagger}}>\frac{\beta_{c^\dagger}}2$ (by applying Lemma \ref{prop:lemma} to the instance $I_{c^\dagger}$, since $c^\dagger\in\classes\setminus\tilde{\classes}$) and because $\beta_{c^\dagger}$ is also an integer.
\end{itemize}

\end{proof}

\subsection{Proof of Proposition \ref{prop:mbi}}

\begin{proof}
Consider any feasible solution $(\boldsymbol{x},\boldsymbol{y},\boldsymbol{z})$
of $\fone$. Summing the capacity constraints~\eqref{con:kanto-cap}
over all $b\in\bins$ and using the assignment constraints gives
\[
\sum_{i\in\items} w_i
 + \sum_{c\in\classes} s_c\sum_{b\in\bins}y_{cb}
 \le d\sum_{b\in\bins}z_b.
\]
By Proposition~\ref{prop:mcoi}, every integer-feasible solution of
$\fone$ satisfies the MCIs, and hence
$\sum_{b\in\bins}y_{cb}\ge \binnumberLBclass_c$ for every
$c\in\classes$. Since $s_c\ge0$, it follows that
\[
\sum_{i\in\items} w_i
 + \sum_{c\in\classes}\binnumberLBclass_c s_c
 \le d\sum_{b\in\bins}z_b.
\]
Therefore,
\[
\sum_{b\in\bins}z_b \ge
\frac{\sum_{i\in\items} w_i+
      \sum_{c\in\classes}\binnumberLBclass_c s_c}{d}.
\]
Finally, since the left-hand side is a sum of binary variables, the right-hand side can be rounded up, yielding
\eqref{MBI}.
\end{proof}

\subsection{Proof of Proposition \ref{prop:lprelax_3}}

\begin{proof}
The proof follows the same logical scheme as that of Proposition~\ref{prop:lprelax}.  
The only difference is that we now rely \emph{directly} on the MBI~\eqref{MBI} together with the MCIs~\eqref{MCI}.  
Hence, every feasible solution of $\ftwoLPbis$ satisfies
\begin{equation}\label{eq:LB_ftwoLP_mu}
\sum_{b\in\bins}\!\Bigl(\bincost \, z_b + \sum_{c\in\classes} f_c\,y_{cb}\Bigr)
\;\ge\;
\bincost \!\sum_{b\in\bins} z_b
\;+\;
\sum_{c\in\classes} f_c \!\sum_{b\in\bins} y_{cb}
\;\ge\;
\bincost \, \binnumberLB
\;+\;
\sum_{c\in\classes} \binnumberLBclass_c \, f_c,
\end{equation}
Accordingly, the right-hand side of \eqref{eq:LB_ftwoLP_mu} is a lower bound on $\zeta(\ftwoLPbis)$.

Now consider the point defined in~\eqref{sol:LP2bis}.  For each $b\in\bins$,
\[
\sum_{i \in \items} w_i \, x_{ib} + \sum_{c \in \classes} s_c \, y_{cb}
= \underbrace{\sum_{i \in \items} w_i \,\frac{1}{\binnumberUB}}_{\text{by }\eqref{sol:LP2bis}}
  + \underbrace{\sum_{c \in \classes} s_c \,\frac{\binnumberLBclass_c}{\binnumberUB}}_{\text{by }\eqref{sol:LP2bis}}
= \frac{1}{\binnumberUB}\!\left(\sum_{i \in \items} w_i + \sum_{c \in \classes} \binnumberLBclass_c s_c\right)
\;\le\; \underbrace{\frac{d\,\binnumberLB}{\binnumberUB}}_{\text{by }\eqref{MBI}}
= \underbrace{d \, z_b}_{\text{by }\eqref{sol:LP2bis}} ,
\]
so the capacity constraints~\eqref{con:kanto-cap} are satisfied (in general, not at equality). The satisfaction of the other constraints is the same as
in Proposition~\ref{prop:lprelax}. Therefore, it can be verified that this point satisfies all the constraints of $\ftwoLPbis$, and is therefore a feasible solution.
Its objective value is
\begin{align*}
    \sum_{b \in \bins} \!\Bigl(\bincost \, z_b + \sum_{c \in \classes} f_c \, y_{cb}\Bigr)
&= \underbrace{\bincost \sum_{b\in\bins} \frac{\binnumberLB}{\binnumberUB}}_{\text{by }\eqref{sol:LP2bis}}
\;+\;
\underbrace{\sum_{c \in \classes} f_c \sum_{b \in \bins} \frac{\binnumberLBclass_c}{\binnumberUB}}_{\text{by }\eqref{sol:LP2bis}} \\[1ex]
&= \bincost\,\binnumberLB + \sum_{c \in \classes} \binnumberLBclass_c \, f_c .
\end{align*}
Thus, the objective value of this solution coincides with the right-hand side of~\eqref{eq:LB_ftwoLP_mu}.  
Consequently, the solution attains the lower bound and is therefore optimal, with objective function value~\eqref{eq:LPoptval_2bis}.
\end{proof}

\subsection{Proof of Proposition \ref{prop:worstcasebis}}

\begin{proof}
Consider the family of \BPPS{} instances introduced in the proof of Proposition \ref{prop:worstcase} where 
\[
\opt = n \, \bincost
~~~{\rm and}~~~
\binnumberLBclass_1 \;=\; \left\lceil \frac{n \,\vartheta}{2 \, \vartheta-1} \right\rceil.
\]
Let $k = n$. By Proposition~\ref{prop:lprelax_3}, the optimal objective function value of $\ftwoLPbis$ is
\[
\zeta(\ftwoLPbis)
= 
 \bincost \!\left\lceil \frac{n \, \vartheta + \left\lceil \frac{n \, \vartheta}{2 \, \vartheta-1} \right\rceil}{2 \, \vartheta} \right\rceil 
=  \bincost \!\left\lceil \frac{n}{2} + \frac{\left\lceil \frac{n \, \vartheta}{2 \, \vartheta-1} \right\rceil}{2 \, \vartheta} \right\rceil 
\]
Consider an even number of items $n$. Taking the limit of $\zeta(\ftwoLPbis)$ as $\vartheta \to \infty$ gives
\[
\lim_{\vartheta\to\infty}\; \zeta(\ftwoLPbis)
= \lim_{\vartheta\to\infty}\; \bincost \!\left\lceil \frac{n}{2} + \frac{\left\lceil \frac{n \, \vartheta}{2 \, \vartheta-1} \right\rceil}{2 \, \vartheta} \right\rceil
= \bincost \left(\frac n2 + 1 \right),
\]

since, for all sufficiently large values of $\vartheta$, 
$$ \left\lceil \frac{n}{2} + \frac{ \left\lceil \frac{n\vartheta}{2\vartheta-1} \right\rceil }{ 2\vartheta } \right\rceil = \frac{n}{2}+1. $$ 

Thus, we have that
$$ \lim_{\vartheta\to\infty} \frac{\zeta(\ftwoLPbis)}{\opt} = \frac{ \bincost\left(\frac{n}{2}+1\right) }{ \bincost n } = \frac{1}{2}+\frac{1}{n}, $$ 

which tends to $\frac 12$ for any sequence of instances with an even number of items tending to infinity. The result follows by combining this limit with Proposition~\ref{prop:ratio_MCI}.

\end{proof}

\subsection{Proof of Proposition \ref{upper_bound_bins}}

\begin{proof}
In any optimal \BPPS{} solution $\mathcal{S}^\star \subseteq 2^\items$, we have
\begin{equation}
\label{first}
\underbrace{r\,|\mathcal S^\star| + \!\!\sum_{S\in\mathcal S^\star}\sum_{c\in\mathcal C(S)}\!\! f_c}_{= \opt}
\;\leq\; r\sum_{c \in \classes}\beta_c + \sum_{c \in \classes} f_c \, \beta_c,
\end{equation}
since the optimal objective function value of the \BPPS{}, $\opt$, cannot exceed the value of the heuristic solution obtained in Step~\ref{itm:uno} of the Constructive Heuristic Algorithm described in the proof of Proposition~\ref{prop:ratio_MCI}.

Moreover, since the items of any class $c \in \classes$ cannot be packed into fewer than $\beta_c$ bins, we have

\begin{equation} \label{second}
\sum_{S\in\mathcal S^\star}\sum_{c\in\mathcal C(S)} f_c \;\ge\; \sum_{c \in \classes} f_c \, \beta_c \, .    
\end{equation}

By chaining inequalities~\eqref{first} and~\eqref{second}, we obtain
\[
r\,|\mathcal S^\star| + \!\!\sum_{S\in\mathcal S^\star}\sum_{c\in\mathcal C(S)}\!\! f_c \le r \sum_{c \in \classes} \beta_c + \sum_{S\in\mathcal S^\star}\sum_{c\in\mathcal C(S)} f_c \, ,
\]
which implies
\[
|\mathcal{S}^\star| \;\leq\; \sum_{c \in \classes} \beta_c \, .
\]

Therefore, the number of bins used in any optimal BPPS solution is at most equal to 
the sum, over all classes, of the minimum number of bins $\beta_c$ required to pack the 
items of class $c \in \classes$. In particular, replacing $\beta_c$ with any upper bound 
$\overline{\beta}_c \ge \beta_c$ still yields a valid (though possibly weaker) overall bound:
\[
|\mathcal{S}^\star| \;\leq\; \sum_{c \in \classes} \overline{\beta}_c.
\]
\end{proof}

\bigskip

\section{Additional computational results}

This section presents supplementary computational results that complement the analysis of Section~\ref{sec:performance}. Specifically, Section~\ref{sec:boxplots} reports the distribution of computing times and optimality gaps across the variants of $\fone$ and $\afone$ via box plots. Section~\ref{sec:relaxation_instance_features} examines instead the quality of the LP relaxation bounds and the structural features of the optimal solutions across several instance classes.

\subsection{Distribution of computing times and optimality gaps}
\label{sec:boxplots}

This section reports the distributions of computation times and optimality gaps for the variants of~$\fone$ and~$\afone$ across all tested instance sizes (in terms of the number of items $n$). 
Figures~\ref{fig:boxplots_time_natural}~and~\ref{fig:boxplots_time_arcflow} show the distribution of computing times (in seconds, on a logarithmic scale) for the variants of $\fone$ and $\afone$, respectively. Each box plot 
reports the median (solid red line), the interquartile range, and the extreme values, together with the mean (dashed blue line) for the subset of instances sharing the same value of $n$. The box-plot data are restricted to the subset of instances solved to proven optimality by the strongest variant of each formulation, namely $\fthree$ and $\afthree$, respectively.

For instances with $n = 25$, all variants in both families exhibit comparable median times, below $0.4$ seconds, with $\fthree$ and $\afthree$ already showing a lower mean due to fewer extreme outliers. The gap widens as $n$ grows. For $n = 50$, the median time of $\fthree$ is approximately $0.14$ seconds, roughly two orders of magnitude below that of $\fone$; within the arc-flow family, $\afthree$ and $\aftwo$ achieve comparable medians below $6$ seconds, while $\afone$ displays a substantially wider interquartile range and a higher mean. For $n = 100$, $\fthree$ reaches a median around $3$ seconds, while $\afthree$ achieves a median slightly above $50$ seconds. At $n = 200$, $\fone$ fails to solve the majority of instances within the time limit, while $\fthree$ maintains a median below $20$ seconds. On the other hand, all the arc-flow variants reach median times of hundreds of seconds for $n=200$. Overall, the arc-flow variants tend to require more time than their natural-formulation counterparts across all values of $n$. 

\begin{figure}[h]
    \centering
    
    \scriptsize
    \renewcommand{\arraystretch}{1.25}
    \tabcolsep 8.5pt
    \begin{tabular}{lrrrrrrrrrrrrr}
\toprule
        &      &  & \multicolumn{2}{r}{$\fone$}  &  & \multicolumn{2}{r}{$\ftwo$}  &  & \multicolumn{2}{r}{$\ftwobis$} &  & \multicolumn{2}{r}{$\fthree$} \\ \cline{4-5} \cline{7-8} \cline{10-11} \cline{13-14} 
        &      &  & \multicolumn{2}{r}{time (s)} &  & \multicolumn{2}{r}{time (s)} &  & \multicolumn{2}{r}{time (s)}   &  & \multicolumn{2}{r}{time (s)}  \\ \cline{4-5} \cline{7-8} \cline{10-11} \cline{13-14} 
   & inst &  & median        & mean         &  & median         & mean        &  & median         & mean          &  & median         & mean         \\ \cline{1-2} \cline{4-5} \cline{7-8} \cline{10-11} \cline{13-14} 
        &      &  &               &              &  &                &             &  &                &               &  &                &              \\[-2ex]
$n=25$  & 137  &  & 0.4           & 41.7         &  & 0.3            & 62.0        &  & 0.1            & 25.3          &  & 0.0            & 20.8         \\[1ex]
$n=50$  & 107  &  & 6.8           & 502.3        &  & 2.3            & 175.9       &  & 0.5            & 92.3          &  & 0.1            & 41.5         \\[1ex]
$n=100$ & 79   &  & 607.2         & 920.5        &  & 28.2           & 439.3       &  & 9.6            & 169.5         &  & 3.3            & 103.7        \\[1ex]
$n=200$ & 63   &  & 1,800.0        & 1,407.0       &  & 297.2          & 575.6       &  & 48.6           & 207.4         &  & 19.0           & 136.3        \\[1ex] \bottomrule
\end{tabular}

    \bigskip

    \includegraphics[width=0.95\linewidth]{./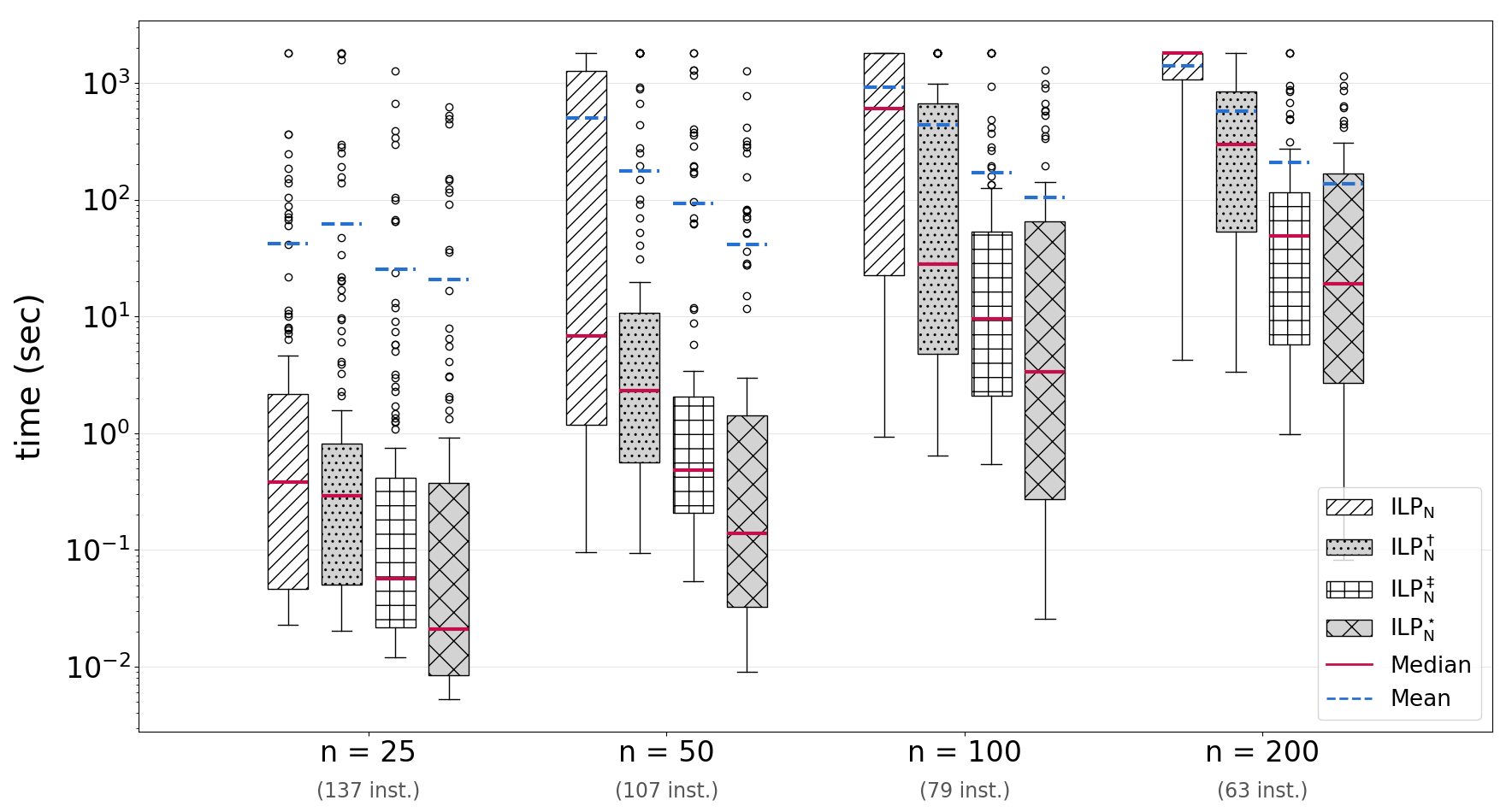}  

     \caption{Summary statistics (top) and box plots (bottom) of the solution times of the variants of $\fone$, restricted to instances solved to optimality by $\fthree$. Vertical axis on a logarithmic scale. Time limit: 1800 seconds.}
   
    \label{fig:boxplots_time_natural}
\end{figure}

\begin{figure}[h]
    \centering

\scriptsize
\renewcommand{\arraystretch}{1.25}
\tabcolsep 13pt

\begin{tabular}{lrrrrrrrrrr}
\toprule
        &      &  & \multicolumn{2}{r}{$\afone$} &  & \multicolumn{2}{r}{$\aftwo$} &  & \multicolumn{2}{r}{$\afthree$} \\ \cline{4-5} \cline{7-8} \cline{10-11} 
        &      &  & \multicolumn{2}{r}{time (s)} &  & \multicolumn{2}{r}{time (s)} &  & \multicolumn{2}{r}{time (s)}   \\ \cline{4-5} \cline{7-8} \cline{10-11} 
   & inst &  & median        & mean         &  & median        & mean         &  & median         & mean          \\ \cline{1-2} \cline{4-5} \cline{7-8} \cline{10-11} 
        &      &  &               &              &  &               &              &  &                &               \\[-2ex]
$n=25$  & 144  &  & 0.2           & 14.5         &  & 0.2           & 22.3         &  & 0.2            & 11.0          \\[1ex]
$n=50$  & 128  &  & 6.3           & 399.8        &  & 6.0           & 242.7        &  & 5.4            & 58.7          \\[1ex]
$n=100$ & 69   &  & 157.5         & 754.9        &  & 94.7          & 649.7        &  & 53.8           & 242.6         \\[1ex]
$n=200$ & 38   &  & 839.5         & 915.5        &  & 624.9         & 859.9        &  & 197.6          & 406.8         \\ [1ex]\bottomrule
\end{tabular}

\bigskip
    \includegraphics[width=0.95\linewidth]{./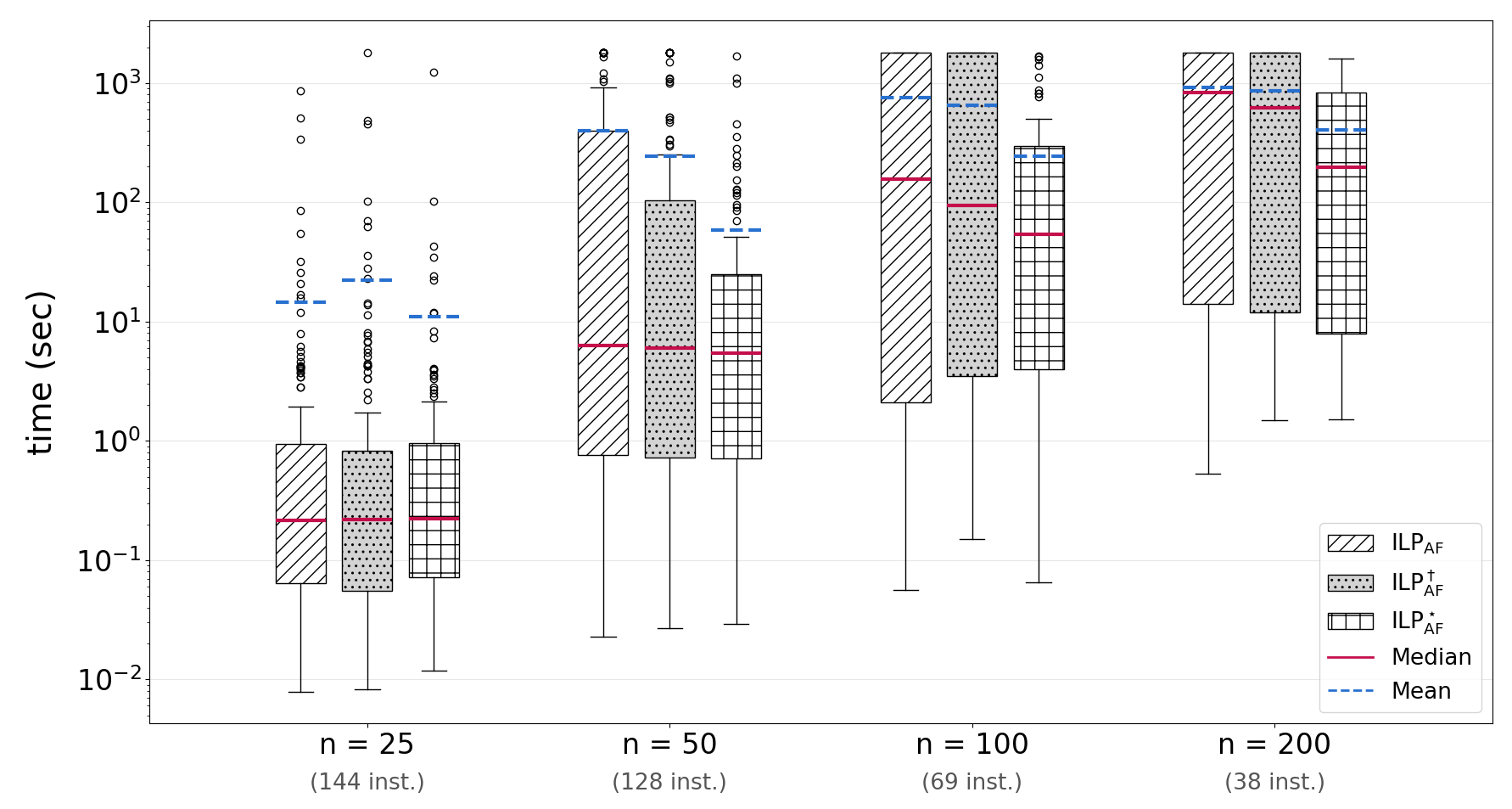}   

\caption{Summary statistics (top) and box plots (bottom) of the solution times of the variants of $\afone$, restricted to instances solved to optimality by $\afthree$. Vertical axis on a logarithmic scale. Time limit: 1800 seconds.}
    \label{fig:boxplots_time_arcflow}
\end{figure}

Figures~\ref{fig:boxplots_gap_natural}~and~\ref{fig:boxplots_gap_arcflow} show the distribution of the optimality gap (in percentage) for the variants of $\fone$ and $\afone$, respectively, restricted to the instances 
not solved to proven optimality by $\fthree$ and $\afthree$ within the time limit. For the arc-flow family, only instances for which a valid lower bound was obtained for all variants within the time limit are included. 

For the natural formulation, the results at $n = 25$ involve only seven instances and show little differentiation across variants, with median gaps all close to $11\%$. The picture changes substantially at larger scales. 
For $n = 50$, the median gap of $\fone$ is $8.3\%$, while $\fthree$ reduces it to $5.9\%$; the gap between $\fone$ and the variants that include the additional inequalities widens further as $n$ grows. For $n = 100$, $\fone$ 
achieves a median gap of $12.1\%$, whereas $\ftwo$, $\ftwobis$, and $\fthree$ all attain a median near $5.3\%$, a reduction of more than half. 
The most pronounced differences appear at $n = 200$: $\fone$ has a median gap of $12.7\%$ and a mean of $16.4\%$, while $\ftwo$, $\ftwobis$, and $\fthree$ all maintain a median below $3.9\%$ and a mean below $4.5\%$. 
These results indicate that the MCIs have a strong effect on bound quality, particularly on the hardest instances, and that the additional enhancements introduced in $\ftwobis$ and $\fthree$ beyond those in $\ftwo$ provide only marginal further improvement in the optimality gap.

For the arc-flow family, the number of unsolved instances available for this analysis is more limited, particularly at $n = 50$ (seven instances), so the corresponding statistics should be interpreted with caution. At $n = 100$, $\afone$ achieves a median gap of $5.6\%$, while $\aftwo$ and $\afthree$ reduce this to approximately $4\%$. At $n = 200$, the three variants become nearly indistinguishable in terms of median gap, with values of $3.0\%$, $3.3\%$, and $2.7\%$ for $\afone$, $\aftwo$, and $\afthree$, respectively, suggesting that, on instances of this size, the valid inequalities provide limited additional bound improvement.

\begin{figure}[]
    \centering
    
    \scriptsize
    \renewcommand{\arraystretch}{1.25}
    \tabcolsep 8.5pt

    \begin{tabular}{lrrrrrrrrrrrrr}
\toprule
        &      &  & \multicolumn{2}{r}{$\fone$}  &  & \multicolumn{2}{r}{$\ftwo$}  &  & \multicolumn{2}{r}{$\ftwobis$} &  & \multicolumn{2}{r}{$\fthree$} \\ \cline{4-5} \cline{7-8} \cline{10-11} \cline{13-14} 
        &      &  & \multicolumn{2}{r}{gap (\%)} &  & \multicolumn{2}{r}{gap (\%)} &  & \multicolumn{2}{r}{gap (\%)}   &  & \multicolumn{2}{r}{gap (\%)}  \\ \cline{4-5} \cline{7-8} \cline{10-11} \cline{13-14} 
   & inst &  & median             & mean    &  & median         & mean        &  & median             & mean      &  & median            & mean      \\ \cline{1-2} \cline{4-5} \cline{7-8} \cline{10-11} \cline{13-14} 
        &      &  &                    &         &  &                &             &  &                    &           &  &                   &           \\[-2ex]
$n=25$  & 7    &  & {11.1}      & 8.9     &  & 11.1           & 9.8         &  & 11.1               & 11.1      &  & 11.1              & 11.1      \\[1ex]
$n=50$  & 37   &  & 8.3                & 8.8     &  & 6.3            & 7.3         &  & 6.2                & 6.6       &  & {5.9}      & 6.7       \\[1ex]
$n=100$ & 65   &  & 12.1               & 12.5    &  & 5.3            & 5.0         &  & {5.3}       & 4.7       &  & 5.3               & 5.0       \\[1ex]
$n=200$ & 81   &  & 12.7               & 16.4    &  & 3.8            & 4.4         &  & {3.6}       & 4.3       &  & 3.8               & 4.5       \\[1ex] \bottomrule
\end{tabular}

        \bigskip

    \includegraphics[width=0.95\linewidth]{./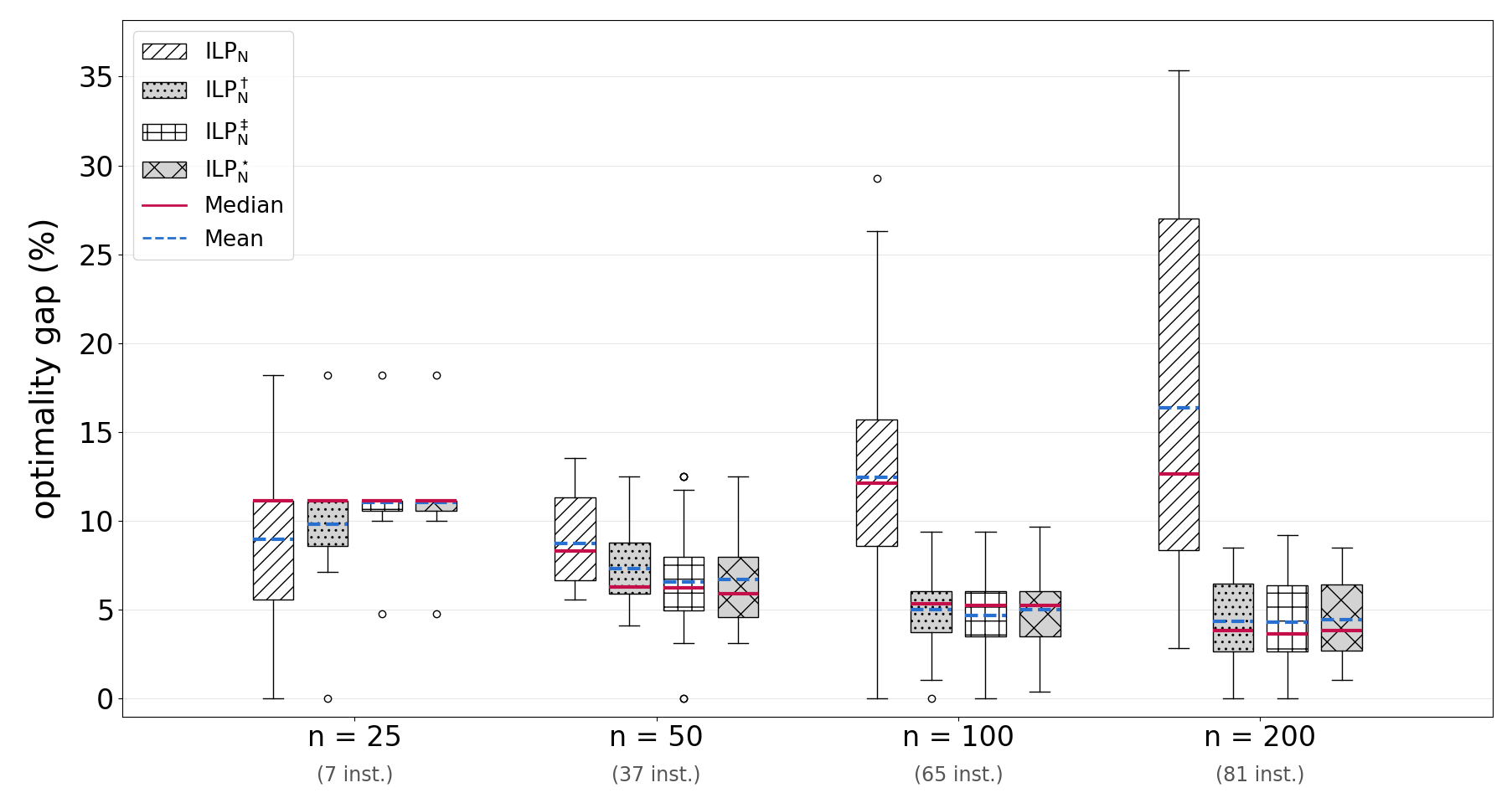}  
    \caption{Summary statistics (top) and box plots (bottom) of the optimality gaps of the variants of $\fone$, restricted to instances not solved to optimality by $\fthree$. Time limit: 1800 seconds.}
    \label{fig:boxplots_gap_natural}

\end{figure}

\begin{figure}[]
    \centering

     \scriptsize
    \renewcommand{\arraystretch}{1.25}
    \tabcolsep 13pt

    \begin{tabular}{lrrrrrrrrrr}
\toprule
        &      &  & \multicolumn{2}{r}{$\afone$} &  & \multicolumn{2}{r}{$\aftwo$} &  & \multicolumn{2}{r}{$\afthree$} \\ \cline{4-5} \cline{7-8} \cline{10-11} 
        &      &  & \multicolumn{2}{r}{gap (\%)} &  & \multicolumn{2}{r}{gap (\%)} &  & \multicolumn{2}{r}{gap (\%)}   \\ \cline{4-5} \cline{7-8} \cline{10-11} 
   & inst &  & median         & mean        &  & median         & mean        &  & median          & mean         \\ \cline{1-2} \cline{4-5} \cline{7-8} \cline{10-11} 
        &      &  &                &             &  &                &             &  &                 &              \\[-2ex]
$n=50$  & 7    &  & 7.0            & 7.6         &  & 3.9            & 4.6         &  & 5.6             & 7.1          \\[1ex]
$n=100$ & 39   &  & 5.6            & 5.0         &  & 3.9            & 3.7         &  & 4.0             & 3.7          \\[1ex]
$n=200$ & 39   &  & 3.0            & 5.1         &  & 3.3            & 5.3         &  & 2.7             & 4.8          \\[1ex] \bottomrule
\end{tabular}

\bigskip
    \includegraphics[width=0.95\linewidth]{./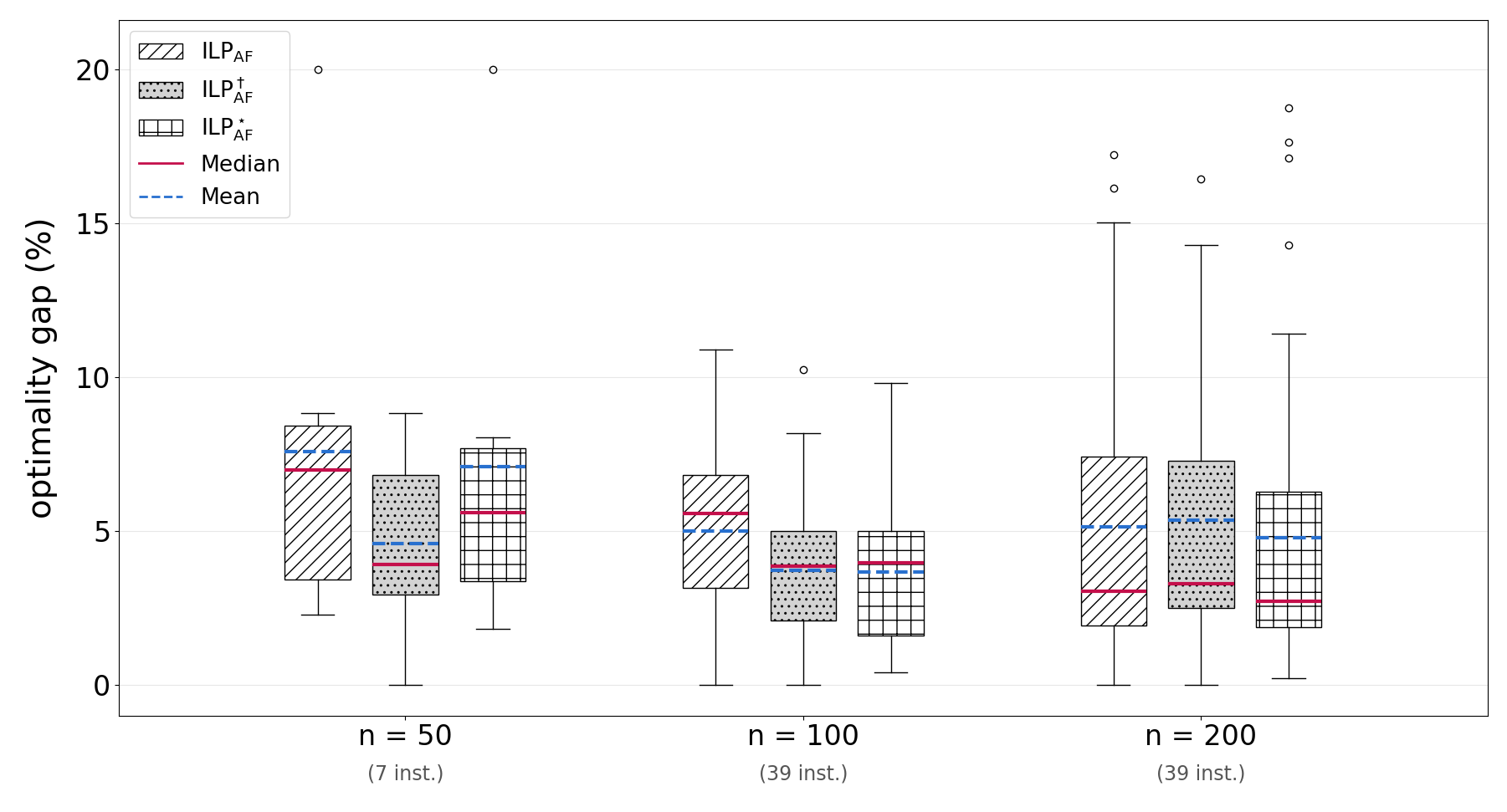}  
    \caption{Summary statistics (top) and box plots (bottom) of the optimality gaps of the variants of $\afone$, restricted to instances not solved to optimality by $\afthree$ and for which a valid dual bound was computed within the time limit for all the variants. Time limit: 1800 seconds.}
    \label{fig:boxplots_gap_arcflow}    
   
\end{figure}

\subsection{LP relaxation strength and optimal solution features: detailed results}
\label{sec:relaxation_instance_features}

In this section, we compare the LP relaxation lower bounds obtained with the variants of the {ILP} formulations we proposed in Sections~\ref{sec:ILP}~and~\ref{sec:arcflow}, and analyze the main features of the optimal solutions obtained on our test set. The results are reported in Table~\ref{tab:inst_info}. 
The table is organized into two main parts. It reports aggregate statistics for {the subset of instances with known optimum for which the LP relaxations of all tested formulations were solved to optimality within the time limit of 1800 seconds.} This subset is chosen because it provides a reliable and consistent basis for comparing bound quality across formulations. {The analysis covers a total of 427 random instances and 36 real-world instances, reported separately in the two panels of the table.}
{Random instances are grouped by different instance-generator parameter values, while real-world instances are grouped by the subset to which they belong.}

The first part of Table~\ref{tab:inst_info} (columns \emph{Integrality gaps}) reports the integrality gap, defined as the percentage difference between the optimal objective function value of the \BPPS{} and the optimal objective function value of the LP relaxation for a given formulation, taken with respect to the optimal objective function value of the \BPPS{}. We consider {the three natural formulations~$\fone$, $\ftwo$, and $\fthree$, whose optimal LP relaxation objective values are $\zeta(\foneLP)$, $\zeta(\ftwoLP)$, and $\zeta(\fthreeLP)$, respectively, as well as the three arc-flow formulations~$\afone$, $\aftwo$, and $\afthree$, whose optimal LP relaxation objective values are $\zeta(\afoneLP)$, $\zeta(\aftwoLP)$, and $\zeta(\afthreeLP)$, respectively.} As established in Equation~\eqref{eq:LPoptval_2bis}, the upper bound $\hat{\binnumberUB}$ does not affect the optimal objective function value of the LP relaxation for $\fthree$. Consequently, the optimal objective function values for the relaxations of $\ftwobis$ and $\fthree$ are identical, i.e., $\zeta(\fthreeLP) = \zeta(\ftwoLPbis)$. Therefore, the remainder of this section will focus exclusively on the value of $\zeta(\fthreeLP)$.

\begin{table}[h]
\scriptsize
\centering
\tabcolsep 2.5pt
\renewcommand\arraystretch{1.5}

\caption{Quality of the lower bound provided by the LP relaxation of the variants of the natural and arc-flow ILP formulations for the BPPS, and features of optimal BPPS solutions. The reported values refer to the subset of instances with a known optimum for which the LP relaxations of all tested formulations were solved to optimality within the 1800-second time limit.
}

\label{tab:inst_info}

\begin{tabular}{llrrrrrrrrrrrrrrrrr}
\hline
&       &        &  & \multicolumn{7}{r}{Integrality gap}  &  &  &  &  & \multicolumn{4}{r}{Optimal BPPS solution features} \\
\cline{5-11} \cline{16-19}
              &  & \#opt  &  & $\foneLP$   & $\ftwoLP$   & $\fthreeLP$   &  & $\afoneLP$   & $\aftwoLP$   & $\afthreeLP$   &  & $\binnumberLB$ & $\hat{\binnumberUB}$&  & \#bins     & \#items    & \#classes    & \%fill    \\ \cline{1-3} \cline{5-7} \cline{9-11} \cline{13-14} \cline{16-19}
                   &             &        &  &           &           &             &  &            &            &             &  &                            &                           &  &            &            &              &           \\[-3ex]
{\tt item number} & $n$ = 25 & 144 &  & 15.6 & 12.4 & 3.5 &  & 8.3 & 7.6 & 2.0 &  & 5.3 & 8.6 &  & 5.6 & 5.5 & 2.0 & 88.1 \\
 & $n$ = 50 & 130 &  & 17.8 & 8.1 & 3.3 &  & 6.7 & 4.8 & 1.8 &  & 10.3 & 13.4 &  & 10.7 & 5.7 & 1.5 & 92.7 \\
 & $n$ = 100 & 91 &  & 19.6 & 5.1 & 2.2 &  & 5.6 & 3.1 & 1.1 &  & 20.3 & 23.4 &  & 20.9 & 6.1 & 1.3 & 95.8 \\
 & $n$ = 200 & 62 &  & 20.8 & 3.7 & 2.1 &  & 4.0 & 2.1 & 0.8 &  & 40.8 & 45.1 &  & 42.0 & 6.6 & 1.2 & 96.8 \\
[0.75ex]
{\tt class number} & $m$ = 5 & 225 &  & 20.5 & 8.4 & 3.0 &  & 7.6 & 5.5 & 1.8 &  & 15.7 & 17.9 &  & 16.2 & 5.8 & 1.3 & 91.9 \\
 & $m$ = 10 & 202 &  & 14.9 & 8.2 & 2.9 &  & 5.6 & 4.4 & 1.2 &  & 14.6 & 19.2 &  & 15.1 & 5.8 & 1.9 & 92.9 \\
[0.75ex]
{\tt bin capacity} & $d$ = 200 & 174 &  & 17.9 & 7.6 & 2.2 &  & 6.7 & 4.9 & 1.1 &  & 16.0 & 19.3 &  & 16.4 & 6.7 & 1.5 & 93.1 \\
 & $d$ = 1,000 & 141 &  & 18.1 & 8.9 & 3.8 &  & 6.7 & 5.4 & 2.1 &  & 14.5 & 17.9 &  & 15.2 & 5.6 & 1.6 & 91.7 \\
 & $d$ = 10,000 & 112 &  & 17.5 & 8.5 & 3.2 &  & 6.3 & 4.8 & 1.7 &  & 14.7 & 18.2 &  & 15.3 & 4.8 & 1.7 & 92.1 \\
[0.75ex]
{\tt bin-setup costs} & no & 224 &  & 14.5 & 9.4 & 2.9 &  & 6.9 & 6.3 & 1.6 &  & 15.9 & 19.3 &  & 16.4 & 5.8 & 1.7 & 93.6 \\
 & yes & 203 &  & 21.6 & 7.1 & 3.1 &  & 6.3 & 3.5 & 1.5 &  & 14.4 & 17.6 &  & 14.9 & 5.9 & 1.5 & 91.1 \\
[0.75ex]
{\tt item weights} & small & 90 &  & 15.3 & 11.9 & 1.8 &  & 11.1 & 9.4 & 1.4 &  & 5.3 & 9.2 &  & 5.4 & 11.6 & 2.3 & 88.9 \\
 & medium & 107 &  & 16.4 & 6.5 & 1.3 &  & 6.6 & 4.5 & 0.9 &  & 13.0 & 15.8 &  & 13.1 & 5.2 & 1.4 & 93.8 \\
 & large & 133 &  & 21.9 & 8.5 & 6.1 &  & 3.5 & 2.5 & 2.4 &  & 25.5 & 29.5 &  & 27.1 & 3.1 & 1.3 & 92.6 \\
 & mixed & 97 &  & 16.3 & 6.6 & 1.6 &  & 6.7 & 4.8 & 1.3 &  & 12.4 & 15.2 &  & 12.5 & 4.9 & 1.5 & 93.8 \\
[0.75ex]
{\tt setup weights} & small & 137 &  & 14.7 & 7.8 & 2.0 &  & 6.4 & 5.1 & 1.0 &  & 13.4 & 16.9 &  & 13.8 & 6.4 & 1.8 & 92.6 \\
 & large & 151 &  & 20.4 & 8.8 & 4.0 &  & 6.7 & 4.9 & 2.1 &  & 16.6 & 19.8 &  & 17.3 & 5.4 & 1.4 & 91.9 \\
 & mixed & 139 &  & 18.2 & 8.2 & 2.8 &  & 6.7 & 5.1 & 1.5 &  & 15.3 & 18.8 &  & 15.8 & 5.7 & 1.6 & 92.7 \\
[0.75ex]
\cline{1-3} \cline{5-7} \cline{9-11} \cline{13-14} \cline{16-19}
Total/Average &  & 427 &  & 17.8 & 8.3 & 3.0 &  & 6.6 & 5.0 & 1.6 &  & 15.2 & 18.5 &  & 15.7 & 5.8 & 1.6 & 92.4 \\
\hline
{\tt real-world} & {\tt rwA} & 12 &  & 19.2 & 9.9 & 9.4 &  & 0.9 & 0.9 & 0.9 &  & 57.5 & 63.2 &  & 63.0 & 2.1 & 1.0 & 90.9 \\
 & {\tt rwB} & 12 &  & 22.7 & 14.7 & 14.1 &  & 0.4 & 0.4 & 0.4 &  & 59.8 & 70.7 &  & 69.8 & 1.9 & 1.0 & 85.2 \\
 & {\tt rwC} & 12 &  & 22.9 & 13.3 & 12.9 &  & 0.4 & 0.4 & 0.4 &  & 88.2 & 101.7 &  & 101.0 & 1.3 & 1.0 & 87.2 \\
[0.75ex]
\cline{1-3} \cline{5-7} \cline{9-11} \cline{13-14} \cline{16-19}
Total/Average &  & 36 &  & 21.6 & 12.7 & 12.1 &  & 0.5 & 0.5 & 0.5 &  & 68.5 & 78.5 &  & 77.9 & 1.7 & 1.0 & 87.8 \\
\bottomrule
\end{tabular}
\end{table}

The first part of the table (\emph{Integrality gap}) allows us to directly quantify the strengthening effect of the MCIs and that of the MBI for the two proposed formulations and across different instance parameters.
{For the natural formulations, the} results demonstrate that incorporating the MCIs significantly reduces the integrality gap across all problem settings compared to~$\foneLP$. {Formulation~$\fthreeLP$} achieves a further reduction {when the MBI is added.} {On average over all random instances, the gap decreases from $17.8\%$ for $\foneLP$ to $8.3\%$ for $\ftwoLP$ and $3.0\%$ for $\fthreeLP$.} {The arc-flow formulations achieve tighter bounds at each level of enhancement: the average gap is $6.6\%$ for~$\afoneLP$, $5.0\%$ for~$\aftwoLP$, and $1.6\%$ for~$\afthreeLP$.}
{Furthermore, as~$n$ increases, the gap of the base natural formulation grows ($15.6\%$ for $\foneLP$ at $n=25$ to $20.8\%$ at $n=200$), while those of the enhanced natural variants and of the arc-flow variants decrease substantially. For~$n=200$, $\afthreeLP$ achieves an average gap below~$1\%$.} Conversely, when the number of classes increases from $m=5$ to $m=10$, the gap of~$\foneLP$ decreases while those of the enhanced natural variants do not change significantly; $\fthreeLP$ and~$\afthreeLP$ maintain gaps of $2.9\%$ and $1.2\%$, respectively, at $m=10$.
Changes in bin capacity~$d$ have only a marginal effect on the integrality gaps across all formulations, suggesting that this parameter is less critical for bound quality.
The presence of setup costs increases the gap for~$\foneLP$ from $14.5\%$ to $21.6\%$, whereas $\fthreeLP$ stays around $3\%$. {Among the arc-flow variants, the gap of~$\afoneLP$ decreases slightly from~$6.9\%$ (no setup costs) to~$6.3\%$ (with setup costs), while~$\afthreeLP$ achieves a gap of approximately $1.5\%$ in both cases.} This behavior {for the natural formulations} can be attributed to the fundamental difference in how the formulations handle class setup costs within their LP relaxation optimal objective function value. The optimal objective function value of~$\foneLP$ accounts for each class exactly once (see Equation~\eqref{eq:LPoptval}), regardless of how many bins actually contain items of that class. In contrast, formulations~$\ftwoLP$ and~$\fthreeLP$, incorporating the MCIs, include an estimate of the number of times each class $c \in \classes$ is activated across different bins, namely the $\binnumberLBclass_c$ coefficient (see Equation~\eqref{eq:LPoptval_2}).
Consequently, when setup costs are present, the LP relaxation of~$\fone$ fails to capture this term of the objective function, as it underestimates the total setup cost by not accounting for multiple activations of the same class.
Finally, item weights exhibit a notable interaction with the quality of the bound provided by the linear relaxations of both formulations. For instances with large item weights, the gap of~$\fthreeLP$ rises to~$6.1\%$, noticeably higher than for other weight categories. As far as setup weights are concerned, small setup weights tend to reduce the gap for all the variants of $\foneLP$. The arc-flow formulation variants behave differently: setup weights do not seem to significantly influence the integrality gap, while all the arc-flow variants achieve small gaps on the instances with large-sized items, below $3.5\%$ on average.

The second part of the table includes the average values of the lower bound $\binnumberLB$ and the upper bound $\hat{\binnumberUB}$, computed as detailed at the beginning of Section~\ref{sec:performance}, as well as the structural
characteristics of the optimal solutions found (\emph{Optimal BPPS Solution Features}): these last columns report the average number of bins used in the optimal solution, the average number of items per bin, the average number of active classes per bin, and the
average bin fill percentage.
The effectiveness of the bounds can be assessed by comparing their average values to the actual average number of bins in optimal \BPPS{} solutions. The lower bound~$\binnumberLB$ demonstrates high accuracy across all parameter configurations, with average values differing from the actual bin count by at most~$1.6$ for instances with large-sized items, and approaching the optimal bin count in many subsets of instances.
The upper bound~$\hat{\binnumberUB}$, on the other hand, provides a looser estimate: for instances with $m=10$, its average value is~$19.2$ while the actual average bin count is~$15.1$, a difference exceeding~$25\%$. Despite this imprecision,
$\hat{\binnumberUB}$ remains substantially smaller than the trivial bound~$n$ across all instance sizes, with the most significant reduction observed for larger instances (average $\hat{\binnumberUB} = 45.1$ vs.\ $n=200$). This reduction enables a substantial decrease in the number of variables and constraints within formulation~$\fthree$, thereby contributing to its superior computational performance relative to the other $\fone$ variants.

The solution statistics on random instances reveal that optimal solutions typically contain between~$5$ and~$7$ items per bin, involve a small number of active classes (around~$1.6$ on average), and achieve a high bin utilization rate ($92.4\%$ on average) across all parameter configurations. When the number of items~$n$ increases, the average number of items per bin grows while the number of active classes per bin decreases, reflecting a tendency towards more homogeneous bin composition in larger instances. Increasing the number of classes~$m$ results in slightly more diverse bins, whereas variations in bin capacity~$d$ have minimal impact on these statistics. The presence of setup costs is associated with marginally fewer active classes per bin, suggesting a packing strategy that limits costly setups. Instances with small item weights produce solutions with more items per bin ($11.6$ on average) and a higher class diversity ($2.3$ active classes per bin). In comparison, large item weights lead to fewer items per bin ($3.1$) and low class diversity ($1.3$).

For the real-world instances, the integrality gaps of the natural formulations remain large ($21.6\%$ for~$\foneLP$ and $12.1\%$ for~$\fthreeLP$ on average), whereas the arc-flow formulations achieve gaps below $1\%$ across all three instance groups.
This contrast suggests that the arc-flow relaxation is particularly well-suited to the structure of real-world \BPPS{} instances. The solution statistics reflect the different nature of these instances: optimal solutions use far more bins on average (up to~$101$ for family~\texttt{rwC}), contain fewer items per bin ($1.7$ on average), and involve exactly one active class per bin in all groups of instances.

\end{document}